\let\footnote=\endnote
\pgfplotsset{compat=newest}
\tikzstyle{line} = [ draw, -latex']
\newtheorem{proof}{XXXXX}}
\begin{document}
%%%%%%%%%%%%%%%%

% Outcomment only when entries are known. Otherwise leave as is and
%   default values will be used.
%\setcounter{page}{1}
%\VOLUME{00}%
%\NO{0}%
%\MONTH{Xxxxx}% (month or a similar seasonal id)
%\YEAR{0000}% e.g., 2005
%\FIRSTPAGE{000}%
%\LASTPAGE{000}%
%\SHORTYEAR{00}% shortened year (two-digit)
%\ISSUE{0000} %
%\LONGFIRSTPAGE{0001} %
%\DOI{10.1287/xxxx.0000.0000}%

% Author's names for the running heads
% Sample depending on the number of authors;
% \RUNAUTHOR{Jones}
% \RUNAUTHOR{Jones and Wilson}
% \RUNAUTHOR{Jones, Miller, and Wilson}
% \RUNAUTHOR{Jones et al.} % for four or more authors
% Enter authors following the given pattern:
\RUNAUTHOR{}

% Title or shortened title suitable for running heads. Sample:
% \RUNTITLE{Bundling Information Goods of Decreasing Value}
% Enter the (shortened) title:
\RUNTITLE{}

% Full title. Sample:
% \TITLE{Bundling Information Goods of Decreasing Value}
% Enter the full title:
\TITLE{A Joint Chance-Constrained \\ Stochastic Programming Approach for the  Integrated Predictive Maintenance and Operations \\ Scheduling Problem in Power Systems}

% Block of authors and their affiliations starts here:
% NOTE: Authors with same affiliation, if the order of authors allows,
%   should be entered in ONE field, separated by a comma.
%   \EMAIL field can be repeated if more than one author
\ARTICLEAUTHORS{%
\AUTHOR{Bahar Cennet Okumuşoğlu}
\AFF{Industrial Engineering Program, Sabancı University, 34956 Istanbul, Turkey, \EMAIL{\href{mailto:okumusoglu@sabanciuniv.edu}{okumusoglu@sabanciuniv.edu}}} %, \URL{}}
\AUTHOR{Beste Basciftci}
\AFF{Department of Business Analytics, University of Iowa, Iowa City, IA, 52242, USA, \EMAIL{\href{mailto:beste-basciftci@uiowa.edu}{beste-basciftci@uiowa.edu}}}
\AUTHOR{Burak Kocuk}
\AFF{Industrial Engineering Program, Sabancı University, 34956 Istanbul, Turkey, \EMAIL{\href{mailto:burak.kocuk@sabanciuniv.edu}{burak.kocuk@sabanciuniv.edu}}}
% Enter all authors
}

\ABSTRACT{Maintenance planning plays a key role in power system operations under uncertainty as it helps system operators ensure a reliable and secure power grid. This paper studies a short-term condition-based integrated maintenance planning with operations scheduling problem while considering the unexpected failure possibilities of generators as well as transmission lines. We formulate this problem as a two-stage stochastic mixed-integer program with failure scenarios sampled from the sensor-driven remaining lifetime distributions of the individual system elements whereas a joint chance-constraint consisting of Poisson Binomial random variables is introduced to account for failure risks. 
%The first-stage of this stochastic program focuses on the maintenance scheduling problem, whereas each second-stage subproblem corresponds to a unit commitment problem under a failure scenario. 
Because of its intractability, we develop a cutting-plane method to obtain an exact reformulation of the joint chance-constraint by proposing a separation subroutine and deriving stronger cuts as part of this procedure. To solve large-scale instances, we derive a second-order cone programming based safe approximation of this constraint. Furthermore, we propose a decomposition-based algorithm implemented in parallel fashion for solving the resulting stochastic program, which exploits the features of the integer L-shaped method and the special structure of the maintenance and operations scheduling problem to derive stronger optimality cuts. We further present preprocessing steps over transmission line flow constraints to identify redundancies. To illustrate the computational performance and efficiency of our algorithm compared to more conventional maintenance approaches, we design a computational study focusing on a weekly plan with daily maintenance and hourly operational decisions involving detailed unit commitment subproblems. Our computational results on various IEEE instances demonstrate the computational efficiency of the proposed approach with reliable and cost-effective maintenance and operational schedules.}

\KEYWORDS{Stochastic programming, mixed-integer programming, joint chance-constraints, condition-based maintenance, unit commitment, power systems.} 

\maketitle
%%%%%%%%%%%%%%%%%%%%%%%%%%%%%%%%%%%%%%%%%%%%%%%%%%%%%%%%%%%%%%%%%%%%%%

% Samples of sectioning (and labeling) in OPRE
% NOTE: (1) \section and \subsection do NOT end with a period
%       (2) \subsubsection and lower need end punctuation
%       (3) capitalization is as shown (title style).
%
%\section{Introduction.}\label{intro} %%1.
%\subsection{Duality and the Classical EOQ Problem.}\label{class-EOQ} %% 1.1.
%\subsection{Outline.}\label{outline1} %% 1.2.
%\subsubsection{Cyclic Schedules for the General Deterministic SMDP.}
%  \label{cyclic-schedules} %% 1.2.1
%\section{Problem Description.}\label{problemdescription} %% 2.

% Text of your paper here
\section{Introduction}
%As the world’s electricity consumption continuously grows,
%With the continuous growth in the world's electricity consumption, 
The competitive power industry has challenged the system operators with prohibitive penalty costs to continue their operations uninterruptedly. %due to unexpected failures of system components. 
A natural way to avoid such interruptions with these penalties is scheduling maintenance for the system components while leveraging their condition information. Such condition-based maintenance increases the operational lifetime of the aging power grid; however, ignoring power system capabilities when performing maintenance may cause large-scale blackouts resulting in additional maintenance and operational costs (see, for example Florida blackout in 2008 \citep{FRCC}). Thus, the condition-based maintenance schedules for generators as well as transmission lines must be coordinated with operational schedules in order to preserve the security of the power grid.

Maintenance schedules of generators and transmission lines have a great effect on power generation as well as power flow. Still, obtaining optimal maintenance schedules for generators has aroused considerably more interest than for transmission lines in the power system literature (\citet{Conejo2005}, \citet{Wu2008}, \citet{Canto2008}). In transmission maintenance planning problem, mathematical complexity ensues from the removal of transmission lines for their unavailable periods due to maintenance, which results in a change in the network topology. Besides, such removals may cause congestion in the power system affecting the reliability and the security of the system. In this respect, the joint optimization of generator and transmission line maintenance (hereafter referred to as the {\it integrated maintenance}) planning problem becomes more critical in power systems to ensure reliable system operations by capturing the complex nature of the problem. %As regards to the respective planning horizon, the integrated short-term maintenance will help system operators ensure an equitable coordination between the market players in a reliable power network.

In the competitive power industry, cost-effective maintenance schedules and demand-delivery under failure uncertainty have become more and more important. Recent advances in grid modernization such as condition-monitoring are widely employed to deal with this failure uncertainty (\citet{Yildirim2016-1}, \citet{Basciftci2018}). In condition-monitoring systems, sensors connected to the power grid monitor the emerging health conditions of degrading system components. These systems can be used as a basis for estimating the residual lifetime of the components by means of degradation signals obtained from real-time sensor information. When scheduling maintenance, such condition-based information on the underlying uncertainty avails system operators of correcting natural causes from degradation and increasing the overall operational lifetime of the aging power infrastructure.
% reducing the failure probability of system components and consequently, 

Many optimization problems in power systems can be modeled as large-scale stochastic mixed-integer programs (SMIPs) as they involve various uncertainties and risks as well as a vast number of binary variables related to  maintenance schedules, commitment status of generators and switching status of transmission lines. 
%For this matter, stochastic programming has attracted many researchers in power systems. 
In view of handling uncertainties, building SMIP models with scenario-dependent variables and constraints is the most prevalent approach. The SMIP models, even with a limited number of scenarios, may become computationally demanding, and moreover, solutions for these SMIPs given by the state-of-the-art solvers can be suboptimal. Thus, these SMIP models necessitate developing novel decomposition-based solution algorithms to achieve tractability. To handle risks, on the other hand, the SMIP models can be built with chance-constraints. There are only a limited number of cases where a chance-constraint is computationally tractable and whenever this is not the case, it can be replaced with its safe approximation which imposes conservatism on the underlying problem. Thus, it becomes critical to provide an equivalent description of these chance-constraints whenever possible within the modeling process. 

In this paper, we study an integrated short-term condition-based maintenance scheduling problem in coordination with operations planning by taking account of unexpected failures of generators as well as transmission lines. 
%We extend the stochastic optimization model of \citet{Basciftci2018} by considering unexpected failures of transmission lines, switching decisions, and direct current power flow equations and model the resulting problem as a joint chance-constrained SMIP. 
We explicitly depict the underlying failure uncertainty as a continuous stochastic degradation process and utilize sensor-driven real-time information to estimate the remaining lifetime distribution (RLD) of system components. Furthermore, we identify those system components prone to failure within the planning horizon and construct failure scenarios based on their estimated RLDs. Additionally, we propose a joint chance-constraint for simultaneously restricting the total number of corrective maintenance occurring due to unexpected failures for generators and transmission lines within the planning horizon along with its exact and safe representation approaches. We develop a decomposition-based cutting-plane framework to efficiently solve the resulting large-scale problem and obtain optimal condition-based daily maintenance schedules, and hourly operational decisions. We validate these maintenance schedules by evaluating them over a larger size of failure scenarios over all system components under a sample average approximation (SAA) approach. 

Our paper makes the following contributions:
\begin{itemize}
    \item We develop a stochastic optimization framework which combines the short-term condition-based generator and transmission line maintenance, and operations planning problems while explicitly considering the impacts of unexpected failures of generators as well as transmission lines on power system operations. Our framework differs from the existing studies in considering condition-based transmission line maintenance with generator maintenance. By engaging real-time degradation-based sensor information in the elaborate failure characterization under a Bayesian approach, we predict the RLDs of generators and transmission lines, and identify a specific subset of these power system components prone to failure within the planning horizon.
    
    \item To account for the failure uncertainty of both generators and transmission lines in our stochastic optimization model, we generate failure scenarios based on their estimated RLDs. We also introduce  a joint chance-constraint to mitigate the unexpected failure risks for generators and transmission lines. Because of its intractability, we develop a cutting-plane method to obtain an exact reformulation of the joint chance-constraint through a separation subroutine and a set of improved cuts. Our solution framework leverages Poisson Binomial random variables in this joint chance-constraint, which can be extended to the settings under similar forms. Moreover, we derive a second-order cone programming based safe approximation of this constraint.
	
	\item We develop a decomposition-based algorithm by improving the integer L-shaped method with various algorithmic enhancements. We exploit the nice and special structure of the scenario subproblems and introduce two concepts: time-decomposability and status of system components. We benefit from these concepts to decrease the total number of scenario subproblems solved and moreover, generate various sets of stronger optimality cuts than the integer L-shaped optimality cuts.  
	We employ parallel computing to implement our decomposition algorithm more efficiently and further present preprocessing steps for identifying redundant transmission line constraints. 
	
	\item We conduct a computational study with various modified IEEE instances to illustrate the computational efficiency of each algorithmic enhancement. We also compare the proposed decomposition algorithm with an existing state-of-the-art solver. For all instances, the underlying problem can be solved orders of magnitude faster with the proposed decomposition algorithm than this solver. Our computational study also shows that the proposed stochastic framework provides $14-31 \%$ cost savings using both the exact reformulation and safe approximation of the joint chance-constraint in comparison with the deterministic model.
\end{itemize}

The remainder of our paper is organized as follows. In Section \ref{s:literatureReview}, we review the relevant literature. In Section \ref{s:stochasticOptimizationModel}, we describe the integrated short-term condition-based maintenance scheduling with operations planning problem, the degradation signal modeling and decomposition structure in detail. The solution methodology with various algorithmic enhancements is presented in Section \ref{s:solutionMethodology}. The computational experiments and extensive numerical results follow in Section \ref{s:computationalExperiments}. We conclude our paper with final remarks in Section \ref{s:conclusions}.

%This paper contributes to the existing literature by engaging real-time degradation-based sensor information in the elaborate failure characterization of power system components under a Bayesian approach, developing a stochastic optimization framework to solve the integrated short-term condition-based maintenance planning problem with operations scheduling problems as well as explicitly considering the impacts of the unexpected failures of system components on power system operations. 
\section{Literature Review} \label{s:literatureReview}

In this section, we review the relevant power system literature on maintenance planning problem (Section \ref{ss:maintenancePlanningPowerSystems}), failure uncertainty (Section \ref{ss:failureUncertaintyInPowerSystems}) and stochastic programming (Section \ref{ss:stochasticProgrammingPowerSystems}). We explicate the contributions of our paper in each section.

\subsection{Maintenance Planning in Power Systems} \label{ss:maintenancePlanningPowerSystems}
Maintenance planning problem in power systems has been widely studied in the literature (for a recent review, see \citet{Froger2016}). This problem concerns both generators and transmission lines, and ideally attempts to identify the unavailability of these components while ensuring a reliable power grid. However, the majority of the existing studies have focused more on obtaining optimal maintenance schedules for generators subject to various operational and network constraints (\citet{Conejo2005}, \citet{Wu2008}, \citet{Canto2008}, \citet{Yildirim2016-2}, \citet{Basciftci2018}) than for transmission lines (\citet{Marwali2000}, \citet{Abbasi2009}, \citet{Abiri2009}, \citet{Conejo2012}, \citet{Lv2012}). This is because of the fact that the power network topology will inherently change due to the maintenance actions for transmission lines, and this varying network topology exceedingly influences the power generation and further complicates the resulting problem.
%in power systems. 
%\citet{Conejo2012} propose a bilevel model for transmission maintenance with power flow constraints over a yearly time horizon. The authors also employ the strong duality to reformulate this problem as a single-level model. However, dual information may not be available when solving transmission maintenance planning problem in a large-scale power network since some operational decisions such as switching status of transmission lines are binary in nature. Some studies do not benefit from the duality theory and use a priority-based dynamic programming \citep{Abbasi2009} to obtain long-term maintenance schedules for overhead transmission lines. However, dynamic programming approach may not be applied in real-time power system operations due to the phenomenon referred to as the curse of dimensionality.
% However, only a few studies address the

The integrated maintenance problem can yield more cost-effective maintenance and operational schedules; however, another source of complexity arises when coordinating the maintenance schedules for both generators and transmission lines. Therefore, the integrated maintenance planning problem has attracted only very few researchers in power systems. The coordination of generator and transmission line maintenance schedules coupled with the security constrained UC is analyzed by \citet{Fu2007} and \citet{Fu2009}. Optimization models in these works can be utilized both in vertically integrated and restructured power systems.
\citet{Geetha2009} coordinate integrated maintenance schedules with an acceptable level of reliability between independent actors in restructured power systems. These studies do not account for the uncertainty resulting from the unexpected failures of system components, which strongly affects the maintenance planning problem. The study by \citet{Wang2016N1} models the generators and transmission maintenance planning problem incorporating N-1 security criterion; however, this deterministic model may provide infeasible maintenance schedules when multiple failures of system components occur in the power system. \citet{Wu2010} formulate an integrated maintenance problem in coordination with the security-constrained UC considering various uncertainties including forced outages of generators and transmission lines over a long-term planning horizon. They model these forced outage rates as a Markov process by using predetermined constant failure characteristics, which may not be a realistic assumption in a dynamic power network. \citet{Wang2016} propose a similar approach to jointly optimize the underlying problem with the security-constrained UC by updating outage scenarios in an iterative manner. However, the authors neglect to consider the effects of these scenarios on operations planning and do not leverage sensor-driven condition information to identify critical system components prone to failure, which is imperative for securing overall power system operations. %Meta-heuristic approaches are also applied to determine optimal maintenance schedules for generator and transmission lines such as particle swarm optimization \citep{Abirami2014} and fuzzy evolutionary algorithm \citep{Sharkh2003}. Still, maintenance schedules given by these methods may not be applicable in practice as they fail to certify the optimality of their solutions. 

\subsection{Failure Uncertainty in Power Systems} \label{ss:failureUncertaintyInPowerSystems}
Quantifying the failure uncertainty in power system operations has been instrumental in the maintenance planning problem. In order to achieve cost-effective maintenance schedules and to ensure the reliability and the security of the aging power infrastructure, the stochasticity arising from failures of system components must be considered in real-time operations. Power system components depict degradation symptoms over time from increasing wear and tear. This continuous degradation process may eventually lead to unexpected failures resulting in unscheduled shutdowns, congested transmission lines, voltage instability and sudden increase in power demand. To extenuate the disruptive impacts of the failures of these system components, many existing operational strategies such as N-1 contingency criterion \citep{N-1criterion} and reserve requirements, and maintenance strategies such as periodic and manufacturer-recommended maintenance schedules \citep{marketOperationsShahidehpour} are used in power systems. These deterministic strategies remain as half measures and are not enough to improve the utilization of the power grid and therefore, many energy companies have recently started to adapt condition-monitoring techniques because of their potential benefits (for a comprehensive review, see \citet{Han-Song2003}). In particular, these are widely employed to estimate the RLDs of system components by tracking degradation of these components using sensors in order to account for unexpected failures.
%Monitoring degradation process of system components to detect inceptive or abrupt failures without the proliferation of condition monitoring systems would require strenuous effort and, as a result, they are widely employed to account for these failures of system components by estimating their RLDs.

Although the failure uncertainty of system components has been considered in the literature for modeling power system operations, most studies neglect component-specific condition information and further assume that system components carry constant failure characteristics over the planning horizon (\citet{Wu2010}, \citet{Papa2013}). As this approach becomes insufficient in capturing the condition information of the components, a few studies recently consider the underlying failure uncertainty by incorporating degradation-based approaches. \citet{Wang2016} adopt a degradation-based model by extending the traditional hazard model and dynamically updating failure characteristics of system components. %; however, the authors do not incorporate various unexpected failure scenarios into the power system operations. 
Furthermore, a deterministic mixed-integer optimization model integrated with condition-based sensor information is presented to obtain optimal maintenance schedules for generators \citep{Yildirim2016-1, Yildirim2016-2}. Recently, \citet{Basciftci2020} propose a similar framework by leveraging time-varying load-dependency to obtain condition-based maintenance schedules for a fleet of generators by presenting a decision-dependent stochastic program to capture the RLDs of the components depending on the operational decisions. Nevertheless, these studies either have been conducted in a deterministic fashion and/or do not take into account scenario-dependent failure uncertainty for both generators and transmission lines at the same time. The optimization framework proposed by \citet{Basciftci2018} embodies sensor-driven condition-based information in the long-term generator maintenance and operations planning problem considering only failure scenarios of generators; however, the authors do not consider the failure uncertainty of transmission lines and their effects on power system operations.
%Although generator maintenance planning problem through degradation-based sensor information has been studied previously,
The existing literature still lacks a unified framework for addressing the integrated condition-based maintenance planning by considering the impacts of the sensor-driven failure uncertainty of both generators and transmission lines on power system operations. As this unified framework becomes critical in ensuring cost-effective and reliable operations of the power systems, the complexities arisen from the integration of line maintenance decisions and their failure possibilities need to be addressed by developing various stochastic optimization techniques, which consists a significant part of the contributions of this study, that can also be extended to problem settings with similar structure. %including derivations for alternative representations of joint probabilistic constraints and tailoring solution algorithms by leveraging problem properties, as part of the contributions of this study. 

%One of the main contributions of our paper is developing a stochastic optimization model for the integrated short-term condition-based maintenance planning problem in coordination with power system operations. We model the underlying failure uncertainty by means of degradation signal models and update the RLDs of system components by using real-time sensor information under a Bayesian setting. 
%Because of the fact that considering all system components under maintenance is impractical in real-time power system operations, a subset of these system components is elaborately selected based on their conditions in Section \ref{ss:problemSetting}. For this specific subset, we generate various failure scenarios based on their estimated RLDs and account for their impacts on power system operations in the stochastic optimization model. We present the resulting stochastic optimization model which incorporates failure uncertainty of each individual system component through degradation signal models in Section \ref{ss:StochasticOptimizationModel}.

\subsection{Stochastic Programming in Power Systems} \label{ss:stochasticProgrammingPowerSystems}
Stochastic programming arises as an important tool for modeling power system operations under uncertainty. Many existing studies in the literature describe the underlying uncertainties with a set of scenarios, i.e., a set of possible realizations of random variables (\citet{Wu2008}, \citet{Papa2013}, \citet{Papa2015}, \citet{Basciftci2018}). Still, conventional methods may not be sufficient to solve the resulting problem in a reasonable amount of time as the scenario set can consist of an extremely large number of scenarios. As this set grows exponentially fast in the size of the network components, solving large-scale problems in power systems necessitates specialized decomposition techniques. Fortunately, the majority of such large-scale problems in power systems can be intrinsically decoupled into many smaller problems, and then recast as two-stage stochastic programs under suitable conditions.
%are amenable to decomposition Various novel methods considering this special structure of the two-stage stochastic programs have been introduced in the literature. 
%  The inefficiency of the cutting plane schemes when degeneracy occurs has motivated \citet{Rusz1986} to propose regularized decomposition.
\citet{SlykeRoger1969} introduced the continuous L-shaped method %, also known as Benders' decomposition,
as a cutting plane technique to solve the two-stage stochastic linear programs with recourse. A common criticism for this method is that the linear programming duality cannot be readily applied when integer decisions exist in the second-stage problems. In particular, SMIPs are known to have their combinatorial challenges attributed to the non-convex (even discontinuous) nature of the expected second-stage objective function. The integer L-shaped method, proposed by  \citet{Laporte1993}, can be applied to solve the two-stage mixed-integer stochastic programs with pure binary first-stage decisions and mixed-integer second-stage decisions. As this algorithm can be extendable to our problem setting, we propose a decomposition-based algorithm in Section \ref{ss:decompositionAlgorithm} by using the features of the integer L-shaped method to solve the integrated short-term condition-based maintenance scheduling with operations planning problem. 
%We formulate this problem as a two-stage joint chance-constrained SMIP with failure scenarios sampled from the sensor-driven remaining lifetime distributions of the individual system elements. The first-stage of this stochastic program focuses on the maintenance planning problem. Operational decisions associated with the unit commitment and economic dispatch problems, called second-stage decisions, are taken after the failure uncertainty is disclosed. 
%Furthermore, we exploit the special structure of the resulting problem and provide an algorithmic enhancement which significantly decreases the computational effort required to solve the second-stage problems. In Section \ref{ss:newOptimalityCuts} and Section \ref{ss:alternativeOptimalityCuts} under this special structure, we derive valid and stronger optimality cuts than the integer L-shaped optimality cuts, which are integrated into the solution procedure implemented in a parallel fashion. 
By exploiting the special structure of this problem, we provide algorithmic enhancements which significantly decreases the computational effort required to solve the second-stage problems, and derive stronger optimality cuts than the integer L-shaped optimality cuts, which are integrated into our solution procedure implemented in a parallel fashion in Section \ref{ss:newOptimalityCuts}.

Chance constraints are widely used in modeling power systems operations as they are subject to various risks associated with many uncertainties (for a comprehensive review, see \citet{Geng2019}). Although chance-constraints have great importance for mitigating risks in power system operations, the feasible set defined by a chance-constraint is in general nonconvex, and obtaining an exact representation of such a constraint can be difficult even under the assumption of convexity. A very well-known case in which such issues do not appear is when a random variable associated with the chance-constraint follows a Gaussian distribution and the probability level of the chance constraint is at least $0.5$. In this case, the corresponding feasible set can be represented as a second-order conic set \citep{Nemirovski2012}. Many studies in power systems follow this Gaussian assumption and obtain such deterministic equivalents of the chance-constraints (\citet{Wu2014}, \citet{Roald2017}). In practice, it may happen that the probability distribution of the random variable is not Gaussian and such tractable representations may not be readily available. Whenever this is the case, safe approximations can be obtained as an alternative, though conservative, representations of the chance-constraints. Recently in maintenance planning literature, \citet{Basciftci2018} introduce a single chance-constraint consisting of the sum of independent Bernoulli random variables, i.e., a Poisson Binomial random variable. The authors ignore this useful information on the underlying distribution and propose a deterministic safe approximation of the chance-constraint by using Markov and Bernstein bounds. Joint chance-constraints are relatively more difficult to handle than a single chance-constraint. Many studies in the literature reformulate the feasible set of the joint chance-constraint by using Bonferroni-based safe approximation (\citet{Ozturk2004}, \citet{Peng2013}, \citet{Baker2017}); however, this safe approximation is likely to be overly conservative.
Thus, for the chance-constraints, there is a trade-off between searching for exact reformulations or deriving safe approximations to provide their alternative representations. 

To mitigate failure risks of generators and transmission lines, we introduce a joint chance-constraint which restricts the total number of these system components under corrective maintenance, which is an undesirable and costly maintenance in case of an unexpected failure. Our joint chance-constraint consists of Poisson Binomial random variables by leveraging the RLDs of the system components. In contrast to the recent work by \citet{Basciftci2018} with a single chance-constraint, we exploit the underlying distribution and propose an exact reformulation of the joint chance-constraint in  Section \ref{ss:exactReformulation}.
Our proposed decomposition algorithm under exact reformulation can be used for any two-stage joint chance-constrained stochastic program with pure binary first-stage decisions and independent Poisson Binomial random variables associated with this joint chance-constraint. Further, we investigate the separation problem over the joint chance-constraint and develop a separation subroutine within our decomposition algorithm.  By exploiting the distributional information on the Poisson Binomial random variables, we strengthen the cutting planes which are generated within the separation subroutine.
%We propose an exact reformulation of the joint chance-constraint under the assumption that a probability oracle exists and computes the exact value of the probability of the non-convex joint chance-constraint by using the knowledge on the distribution of the random variables in Section \ref{ss:exactReformulation}.
%Further, we investigate the separation problem over the joint chance-constraint and use this probability oracle as a separation subroutine within our proposed decomposition algorithm to check the feasibility of a given solution and generate violated cover inequalities, if such an equality exists.
%This cutting-plane method guarantees an exact solution but may show slow convergence and require more computational effort as the size of the problem increases. 
To solve large-scale instances, we also propose a second-order cone programming based safe approximation of the joint chance-constraint in Section \ref{ss:deterministicSafeApproximation}. Without any assumption on the underlying distribution of the independent random variables associated with the joint chance-constraint, our decomposition algorithm under safe approximation can also be extended to handle any two-stage joint chance-constrained stochastic program with pure binary first-stage decisions.
%The proposed decomposition algorithm under exact reformulation and safe approximation can be extended to handle any SMIPs with joint-chance constraint consisting of independent Poisson Binomial random variables.
%The key basis of this reformulation is a second-order cone programming reformulation by lifting the associated feasible set to a higher-dimensional space by introducing auxiliary variables.

\section{Stochastic Optimization Model} \label{s:stochasticOptimizationModel}
In this section, we first describe the problem setting (Section \ref{ss:problemSetting}) and present the joint chance-constrained stochastic optimization model (Section \ref{ss:StochasticOptimizationModel}). We explain how to characterize the underlying failure uncertainty by using degradation signal modeling in detail in Section \ref{ss:DegradationSignalModelingandScenarioGeneration}. We provide the compact formulation and decomposition-based reformulation of our optimization model in Section \ref{ss:DecompositionStochOPT}.

\subsection{Problem Setting} \label{ss:problemSetting}
In our study, we consider a power network $\mathcal{N} = (\mathcal{B}, \mathcal{L})$, where $\mathcal{B}$ and $\mathcal{L}$ represent the sets of buses and transmission lines, respectively.  We denote the set of generators linked to buses as $\mathcal{G} \subseteq \mathcal{B}$. In particular, $\mathcal{G}(i)$ denotes the set of generators attached to bus $i$. We let $\delta^+(i)$ and $\delta^-(i)$ be the sets of outgoing and incoming neighbors of bus $i$, respectively. We define $\mathcal{G'}$ as the set of generators which potentially need to be maintained, and  $\mathcal{G''}$ as the set of generators which are not scheduled for maintenance within the planning horizon due to their low failure probabilities as detailed below. Similarly, we define the sets $\mathcal{L'}$ and $\mathcal{L''}$ for representing the transmission lines. In the remainder of this paper, we use the term ``component'' to refer both generators and transmission lines and let the set of components to be $\mathcal{H} = \mathcal{H}' \cup \mathcal{H}''$, where $\mathcal{H}' = \mathcal{G}' \cup \mathcal{L}'$ and $\mathcal{H}'' =\mathcal{G}'' \cup \mathcal{L}''$. We explicitly specify the type of components with subscripts when necessary. 
%Along with the maintenance and operations scheduling decisions of the components, transmission switching decisions for the set of lines $\mathcal{L}'$ are considered within planning.  

The proposed stochastic optimization model incorporates the uncertainty in failure times of system components. In addition to the introduced joint chance-constraint that ensures the reliable operations of the system based on RLDs, we represent the  uncertainty in the optimization model with a finite set of scenarios, denoted by $\mathcal{K}$, where scenario $k$ contains a possible realization of random failure time $\xi^k_h$ of component $h$. We also consider a finite set of maintenance periods, denoted by $\mathcal{T}$, and a finite set of hourly subperiods in each maintenance period, denoted by $\mathcal{S}$. Additionally, we define an extended planning horizon as $\mathcal{ \bar T} = \mathcal{T} \cup \{{ |\mathcal{T}| + 1}\}$ for cases in which components do not fail within the planning horizon. We identify subset $\mathcal{H}'$ based on the RLDs of system components. The main reason of this subset selection is that scheduling all system components for short-term maintenance is impractical and unnecessary in real-time power system operations. 
%Thus, identifying subset $\mathcal{H'}$ needs meticulous and special selection procedure, which we apply using remaining lifetime distributions. 
We explain how to obtain a characterization on the RLDs in detail in  Section \ref{ss:DegradationSignalModelingandScenarioGeneration}. Here, we outline the main steps for identifying subset $\mathcal{H}'$. Suppose we are given a probability threshold $\bar p_{fail} \in [0,1]$ (e.g., $\bar p_{fail} = 0.1$). For component $h \in \mathcal{H}$, we first obtain its failure probability $p^h_{fail}$ within the planning horizon. If $p^h_{fail} \ge \bar p_{fail}$, we add component $h$ to set $\mathcal{H}'$.
After having identified the set $\mathcal{H}'$, we sample failure scenarios for each component $h \in \mathcal{H}'$ from its unique RLD based on the scenario generation procedure proposed by \citet{Basciftci2018}. If a component $h$ does not fail within the planning horizon under scenario $k$, we let $\xi_{h}^k = |\mathcal{\bar T}|$. 
We note that the components in $\mathcal{H}'$ are assumed to enter maintenance at most once, whereas  components belonging to set $\mathcal{H}''$ are not scheduled for maintenance within the planning horizon since their failure probabilities are negligible. In our solution evaluation scheme, however, we assume that all components from set $\mathcal{H}$ may fail within the planning horizon.

\subsection{Mathematical Model and Formulation} \label{ss:StochasticOptimizationModel}
In this section, we first introduce the necessary notations for our optimization model. In Table \ref{fig:problemParameters}, we present the notation used for the decision variables and parameters along with their definitions. The scenario-dependent decisions variables (also parameters) are associated with the superscript~$k$. %The scenario-dependent decision variables are taken after the failure uncertainty is disclosed.
%
%
% Decision variables & parameters
%
%
\begin{table}[h]
\centering
\begin{tabular}{|rl}
\hline
\multicolumn{2}{|l|}{$\textbf{Parameters}$}   \\ \hline
$\pi^k$                        & \multicolumn{1}{l|}{Probability of scenario $k$.}                                              \\
$\xi^k_i$                      & \multicolumn{1}{l|}{Failure time of generator $i$ in scenario $k$.}                             \\
$\xi^k_{ij}$                   & \multicolumn{1}{l|}{Failure time of transmission line $(i,j)$ in scenario $k$.}                             \\
$\tau^{p}_{{\scriptscriptstyle \mathcal{G}}} (\tau^{c}_{{\scriptscriptstyle \mathcal{G}}})$                         & \multicolumn{1}{l|}{Predictive (corrective) maintenance duration of generators.}                     \\
%$\tau^{c}_{{\scriptscriptstyle \mathcal{G}}}$                          & \multicolumn{1}{l|}{Corrective generator maintenance duration in periods.}                     \\
$\tau^{p}_{{\scriptscriptstyle \mathcal{L}}} (\tau^{c}_{{\scriptscriptstyle \mathcal{L}}}) $                     & \multicolumn{1}{l|}{Predictive (corrective) maintenance duration of transmission lines.}                          \\
%$\tau^{c}_{{\scriptscriptstyle \mathcal{L}}}$                          & \multicolumn{1}{l|}{Corrective transmission line maintenance duration in periods.}                          \\
$C^{p}_{i}(C^{c}_i)$                      & \multicolumn{1}{l|}{Predictive (corrective) maintenance cost of generator $i$ in period $t$.}                                    \\
%$C^{c}_i$                      & \multicolumn{1}{l|}{Corrective maintenance cost for generator $i$.}                                    \\
$C^{p}_{ij} ( C^{c}_{ij})$                      & \multicolumn{1}{l|}{Predictive (corrective) maintenance cost of transmission line $(i,j)$ in period $t$.}                            \\
%$C^{c}_{ij}$                      & \multicolumn{1}{l|}{Corrective maintenance cost for transmission line $(i,j)$.}                            \\
$C_{i}^g$                        & \multicolumn{1}{l|}{Generation cost of generator $i$.}                                    \\
$C_i^n$ & \multicolumn{1}{l|}{No-load cost of generator $i$.} \\
$C_{i}^s$  & \multicolumn{1}{l|}{Start-up cost of generator $i$.}\\
$C^{d}_i$  & \multicolumn{1}{l|}{Demand curtailment cost of generator $i$.}\\
${\delta}^{\max}_i ({\delta}^{\min}_i)$                & \multicolumn{1}{l|}{Maximum (minimum) voltage angle at bus $i$.}                                         \\
%${\delta}^{\min}_i$ & \multicolumn{1}{l|}{Minimum voltage angle at bus $i$.}                                         \\
$p^{\max}_i(p_i^{\min})$                     & \multicolumn{1}{l|}{Maximum (minimum) power generation of generator $i$.}                                \\
%$p_i^{\min}$        & \multicolumn{1}{l|}{Minimum power generation at generator $i$.}                                \\
$MU_i (MD_i)$                         & \multicolumn{1}{l|}{Minimum up (down) time of generator $i$.}                                         \\
%$MD_i$                         & \multicolumn{1}{l|}{Minimum down time of generator $i$.}                                       \\
$RU_i (RD_i)$                         & \multicolumn{1}{l|}{Ramp up (down) rate of generator $i$.}                                            \\
%$RD_i$                         & \multicolumn{1}{l|}{Ramp-down rate of generator $i$.}                                          \\
$B_{ij}$                       & \multicolumn{1}{l|}{Susceptance of transmission line $(i,j).$}                                               \\
$d_{its}$                      & \multicolumn{1}{l|}{Power demand of bus $i$ in operational subperiod $s$ of period $t$.} \\
$M_{ij}$                       & \multicolumn{1}{l|}{Sufficiently large number for a flow constraint of transmission line $(i,j)$.}                               \\
\hline
\multicolumn{2}{|l|}{$\textbf{Decision Variables}$}     \\ \hline
$w_{it}$           & \multicolumn{1}{l|}{$1$ if generator $i$ enters maintenance in period $t$, and $0$ otherwise.}                        \\
$z_{ijt}$          & \multicolumn{1}{l|}{$1$ if transmission line $(i,j)$ enters maintenance in period $t$, and $0$ otherwise.}                         \\
$\delta_{its}^k$   & \multicolumn{1}{l|}{Voltage angle at bus $i$ in subperiod $s$ of period $t$ in scenario $k$.}                         \\
$q^k_{its}$        & \multicolumn{1}{l|}{Demand curtailed at bus $i$ in subperiod $s$ of period $t$ in scenario $k$.}                      \\
$x^k_{its}$        & \multicolumn{1}{l|}{Commitment status of generator $i$ in subperiod $s$ of period $t$ in scenario $k$.}               \\
$p^k_{its}$        & \multicolumn{1}{l|}{Power generation of generator $i$ in subperiod $s$ of period $t$ in scenario $k$.}                \\
$u^k_{its}$ & \multicolumn{1}{l|}{$1$ if generator $i$ starts up in subperiod $s$ of period $t$ in scenario $k$, and 0 otherwise.}  \\
$\nu^k_{its}$      & \multicolumn{1}{l|}{$1$ if generator $i$ shuts down in subperiod $s$ of period $t$ in scenario $k$, and 0 otherwise.} \\
$y^k_{ijts}$       & \multicolumn{1}{l|}{Switch status of transmission line $(i,j)$ in subperiod $s$ of period $t$ in scenario $k$.}                    \\
$f^k_{ijts}$       & \multicolumn{1}{l|}{Power flow along transmission line $(i,j)$ in subperiod $s$ of period $t$ in scenario $k$.}                    \\ \hline
\end{tabular}
\caption{Problem parameters and decision variables.}
\label{fig:problemParameters}
\end{table}
Next, we introduce the mathematical notation used in the formulation of the joint chance-constraint. 
This constraint aims to restrict the number of generators and lines that enter corrective maintenance with high probability. 
To this end, we let $\zeta_{ht}$ be a Bernoulli random variable which takes the value 1 if $t \ge \xi_h$, and $0$ otherwise, where $\xi_h$ represents the failure time of component $h$. 
Let us first define the following quantities $R_i(w) = \sum_{t \in \mathcal{\bar T}} \zeta_{it} w_{it}$ for every $i \in \mathcal{G}$ and $R_{ij}(z) = \sum_{t \in \mathcal{\bar T}} \zeta_{ijt} z_{ijt}$ for every $(i,j) \in \mathcal{L}$. Further, we let $E_{\mathcal{G}}(w)$ be the event that the total number of generators under corrective maintenance is less than a predetermined threshold $\rho_{\mathcal{G}}$ as:
\[E_{\mathcal{G}}(w) = \big \{ \sum_{i \in \mathcal{G}} R_i(w) \le \rho_{\mathcal{G}} \big \}.\] 

Here, a component is considered to enter corrective maintenance if its scheduled maintenance time is later than its time of failure. If the scheduled maintenance time is before the time of failure, then the maintenance is considered as predictive and prevents this undesirable failure event. We note that $R_i(w)$ can take at most the value 1, since the components can enter maintenance at most once within the planning horizon. Furthermore, this event is defined over the set $\mathcal{G}$ to capture the failure possibilities over all generators. 

Similarly, we let $E_{\mathcal{L}}(z)$ be the event that the total number of transmission lines under corrective maintenance is less than a predetermined threshold $\rho_{\mathcal{L}}$ as:
\[E_{\mathcal{L}}(z) = \big \{ \sum_{(i,j) \in \mathcal{L}} R_{ij}(z) \le \rho_{\mathcal{L}} \big \}.\] We define event $E_{\mathcal{H}}(v)$ as the intersection of events $E_{\mathcal{G}}(w)$ and $E_{\mathcal{L}}(z)$. 
%Notice that the quantities $R_i(w)$ and $R_{ij}(z)$ are defined for $i \in \mathcal{G''}$ and for $(i,j) \in \mathcal{L''}$, respectively. Before evaluating these quantities given any maintenance decision $v$,
Also, we let $R_i(w) = \zeta_{i|\mathcal{\bar T}|}$ for $i \in \mathcal{G''}$ and $R_{ij}(z) = \zeta_{ij|\mathcal{\bar T}|}$ for $(i,j) \in \mathcal{L''}$. Note that this is equivalent to the assumption that component $h \in \mathcal{H''}$ is not scheduled for maintenance within the planning horizon.

Now, we are ready to present the mathematical formulation of the joint chance-constrained stochastic optimization problem:
\begin{subequations} \label{eq:optimizationModel}
\begin{align}
    \min &\hspace{0.5em} \sum_{k \in \mathcal{K}} \pi^k\Big(\sum_{i \in \mathcal{G'}} \sum_{t=1}^{\xi^k_i-1}  C^{p}_{i} w_{it} + \sum_{i \in \mathcal{G'}}  \sum_{t =\xi^k_i : \xi^k_i  \neq |\mathcal{\bar T}|}^{|\mathcal{\bar T}|} C^{c}_i w_{it} \Big)\nonumber\\
        +&\hspace{.5em} \sum_{k \in \mathcal{K}} \pi^k\Big(\sum_{(i,j) \in \mathcal{L'}} \sum_{t=1}^{\xi^k_{ij}-1}  C^{p}_{ij} z_{ijt} + \sum_{(i,j) \in \mathcal{L'}} \sum_{t =\xi^k_{ij}:\xi^k_{ij} \neq |\mathcal{\bar T}| }^{|\mathcal{\bar T}|} C^{c}_{ij} z_{ijt} \Big)\nonumber\\
        +&\hspace{0.5em} \sum_{k \in \mathcal{K}} \sum_{i \in \mathcal{G}} \sum_{t \in  \mathcal{T}} \sum_{s \in \mathcal{S}}\pi^k(C_{i}^g p^k_{its} + C_i^n x^k_{its} + C_{i}^s u^k_{its})\nonumber\\
        +&\hspace{0.5em} \sum_{k \in \mathcal{K}} \sum_{i \in \mathcal{B}} \sum_{t \in  \mathcal{T}} \sum_{s \in \mathcal{S}}\pi^k C^{d}_i q^k_{its} \label{objective} \\
    \mathrm{s.t.}
        &\hspace{0.5em} \mathbb{P} (E_{\mathcal{G}}(w) \cap E_{\mathcal{L}}(z)) \ge 1 - \alpha \label{chanceConstr} \\
        &\hspace{0.5em} \sum_{t \in \mathcal{\bar 
        T}} w_{it} = 1 \quad i \in \mathcal{G'} \label{limitOnGenMaintenance} \\
        & \hspace{0.5em} \sum_{t \in \mathcal{\bar T}} z_{ijt} = 1 \quad (i,j) \in \mathcal{L'} \label{limitOnLineMaintenance}\\
        & \hspace{0.5em} x^k_{its} \le 1-\sum_{e=0}^{\tau^{p}_{{\scriptscriptstyle \mathcal{G}}}-1} w_{i(t-e)} \quad i \in \mathcal{G'}, s \in \mathcal{S}, t \in \{1,\dots, \xi^k_i+\tau^{p}_{{\scriptscriptstyle \mathcal{G}}}-1\}, k \in \mathcal{K} \label{logicalGenPMaintenance}\\
        & \hspace{0.5em} x^k_{its} \le \sum_{t'=1}^{\xi^k_i-1} w_{it'} \quad i \in \mathcal{G'}, s \in \mathcal{S}, t \in \{\xi^k_i,\dots, \xi^k_i+\tau^{c}_{{\scriptscriptstyle \mathcal{G}}}-1\}, k \in \mathcal{K} \label{logicalGenCMaintenance}\\ 
        & \hspace{0.5em} y^k_{ijts} \le 1-\sum_{e=0}^{\tau^{p}_{{\scriptscriptstyle \mathcal{L}}}-1} z_{ij(t-e)} \quad (i,j) \in \mathcal{L'}, s \in \mathcal{S}, t \in \{1,\dots, \xi^k_{ij}+\tau^{p}_{{\scriptscriptstyle \mathcal{L}}}-1\},  k \in \mathcal{K} \label{logicalLinePMaintenance}\\
        &\hspace{0.5em} y^k_{ijts} \le \sum_{t'=1}^{\xi^k_{ij}-1} z_{ijt'} \quad (i,j) \in \mathcal{L'}, s \in \mathcal{S}, t  \in \{\xi^k_{ij},\dots, \xi^k_{ij}+\tau^{c}_{{\scriptscriptstyle \mathcal{L}}}-1\}, k \in \mathcal{K} \label{logicalLineCMaintenance}\\
        &\hspace{0.5em} z_{ijt}+ y_{ijts}^k = 1  \quad (i,j) \in \mathcal{L'}, t \in \mathcal{T}, s \in \mathcal{S}, k \in \mathcal{K} \label{lineON}\\
        &\hspace{0.5em} \sum_{i' \in \mathcal{G}(i)} p^k_{i'ts} + q^k_{its} -d_{its} = \sum_{j \in \delta^+(i)} f^k_{ijts} - \sum_{j \in \delta^-(i)} f^k_{jits}  \quad i \in \mathcal{B}, t \in \mathcal{T}, s \in \mathcal{S}, k \in \mathcal{K}  \label{flowBalance}\\ 
        &\hspace{0.5em} B_{ij}(\delta^k_{its}-\delta^k_{jts}) = f^k_{ijts}  \quad (i,j) \in \mathcal{L}'', t \in \mathcal{T}, s \in \mathcal{S}, k \in \mathcal{K} \label{angle}\\
        & \hspace{0.5em} B_{ij}(\delta^k_{its}-\delta^k_{jts}) - M_{ij}(1 - y^k_{ijts}) \le f^k_{ijts} && \nonumber\\
        &\hspace{1.5em} \le B_{ij}(\delta^k_{its}-\delta^k_{jts}) + M_{ij}(1 - y^k_{ijts}) \quad (i,j) \in \mathcal{L'}, t \in \mathcal{T}, s \in \mathcal{S}, k \in \mathcal{K} \label{linearizedAngle}\\
        & \hspace{0.5em} -\Bar{f}_{ij} \le  f^k_{ijts}  \le \Bar{f}_{ij} \quad (i,j) \in \mathcal{L}'', t \in \mathcal{T},s \in \mathcal{S}, k \in \mathcal{K} \label{flowBounds}\\
        & \hspace{0.5em} -\Bar{f}_{ij}y^k_{ijts} \le  f^k_{ijts}  \le \Bar{f}_{ij}y^k_{ijts} \quad (i,j) \in \mathcal{L'}, t \in \mathcal{T},s \in \mathcal{S}, k \in \mathcal{K} \label{flowBoundsL}\\
        & \hspace{0.5em} p_i^{\min} x^k_{its}  \le p^k_{its} \le p_i^{\max} x^k_{its}  \quad i \in \mathcal{G}, t \in \mathcal{T}, s \in \mathcal{S}, k \in \mathcal{K} \label{powerBounds}\\
        & \hspace{0.5em} x^k_{it{\scriptscriptstyle(}s-1{\scriptscriptstyle)}} - x^k_{its} + u^k_{its} \ge 0 \quad i \in \mathcal{G}, t \in \mathcal{T}, s \in \mathcal{S}, k \in \mathcal{K} \label{startUp}\\
        & \hspace{0.5em} x^k_{its} - x^k_{it(s-1)} + \nu^k_{its} \ge 0 \quad i \in \mathcal{G}, t \in \mathcal{T}, s \in \mathcal{S}, k \in \mathcal{K} \label{shutDown}\\
        & \hspace{0.5em} -RD_i \le p^k_{its} -  p^k_{it{\scriptscriptstyle(}s-1{\scriptscriptstyle)}} \le RU_i \quad i \in \mathcal{G}, t \in \mathcal{T}, s \in \mathcal{S}, k \in \mathcal{K} \label{RampUpDown}\\
        & \hspace{0.5em} x^k_{its} - x^k_{it(s-1)} \le x^k_{its'} \quad i \in \mathcal{G}, t \in \mathcal{T}, s \in \mathcal{S}, s' \in \{s+1, s+MU_i-1\}, k \in \mathcal{K}  \label{minUp}\\
        & \hspace{0.5em} x^k_{it(s-1)} - x^k_{its} \le 1-x^k_{its'} \quad i \in \mathcal{G}, t \in \mathcal{T}, s \in \mathcal{S}, s' \in \{s+1, s+MD_i-1\}, k \in \mathcal{K}  \label{minDown}\\
        & \hspace{0.5em} w \in \{0,1\}^{|\mathcal{G'}| \times |\mathcal{\bar T}| }, \ z \in \{0,1\}^{|\mathcal{L'}| \times |\mathcal{\bar T}| }  \label{domainMaintenance}\\
        & \hspace{0.5em} x^k, v^k \in \{0,1\}^{|\mathcal{G}| \times |\mathcal{T}| \times |\mathcal{S}|}, y^k \in \{0,1\}^{|\mathcal{L'}| \times |\mathcal{T}| \times |\mathcal{S}|} \quad k\in \mathcal{K} \label{domainOperationalBinary}\\
        & \hspace{0.5em} u^k_{its} \in [0,1]\quad i \in \mathcal{G}, t \in \mathcal{T}, s \in \mathcal{S}, k \in \mathcal{K} \label{domainStartupContinuous}\\
        &\hspace{0.5em} \delta_{its}^k \in [\delta^{\min}_i, {\delta_i}^{\max}], q^k_{its} \ge 0 \quad i \in \mathcal{B}, t \in \mathcal{T}, s \in \mathcal{S}, k \in \mathcal{K}. \label{domainOperationalContinuous}
\end{align}
\end{subequations}

{The objective function \eqref{objective} aims to minimize the expected total cost, which consists of the expected maintenance costs of components and expected operational costs. For each component $h \in \mathcal{H'}$ under each scenario $k \in \mathcal{K}$, we incur its predictive maintenance cost if this component fails in that scenario, i.e., $\xi^k_h < |\mathcal{\bar T}|$, and a maintenance is scheduled before its failure time; or this component does not fail, i.e., $\xi^k_h = |\mathcal{\bar T}|$, and a maintenance is scheduled within the planning horizon. Otherwise, its corrective maintenance cost is incurred for the first case and no cost is incurred for the latter. The operational costs correspond to power generation, commitment, start-up, and demand curtailment. 

Constraint \eqref{chanceConstr} is a joint chance-constraint which holds with probability $1-\alpha$. This constraint limits the total number of generators and transmission lines going under corrective maintenance by predetermined thresholds $\rho_{\mathcal{G}}$ and $\rho_{\mathcal{L}}$, respectively.  Constraints \eqref{limitOnGenMaintenance} and \eqref{limitOnLineMaintenance} imply that exactly one maintenance must be scheduled within the extended planning horizon for every component. Constraints \eqref{logicalGenPMaintenance} and \eqref{logicalLinePMaintenance} ensure that if a component undergoes a predictive maintenance, it becomes unavailable until this predictive maintenance is completed whereas constraints \eqref{logicalGenCMaintenance} and \eqref{logicalLineCMaintenance} ensure the unavailability of a component from its failure time until a corrective maintenance is completed. On the other hand, constraint \eqref{lineON} guarantees that a transmission line is available unless it is under maintenance. Equation \eqref{flowBalance} represents the linearized power flow equations (Kirchhoff's Current Law) for each bus. Notice that heavily penalized power curtailment ($q_{its}$) is further added to \eqref{flowBalance}. This guarantees that we always obtain a feasible solution when the network fails to provide sufficient power supply to meet total power demand, which is a common practice in power systems. Equation \eqref{angle} is the power flow definition derived from Ohm's Law. When a transmission line is switched on $(y_{ijts}^k = 1)$, constraint \eqref{linearizedAngle} ensures that power flow is defined according to Ohm's Law, otherwise both upper bounds and lower bounds become redundant. Constraints \eqref{flowBounds} and \eqref{flowBoundsL} limit the power flow for each transmission line whereas constraint \eqref{powerBounds} limits the power generation for each generator. Constraints \eqref{startUp} and \eqref{shutDown} couple commitment status with start-up and shut-down variables, respectively. Constraint \eqref{RampUpDown} is the ramping constraint which guarantees that the power generation difference between consecutive hours does not exceed ramp-up and ramp-down limits. Constraints \eqref{minUp} and \eqref{minDown} are the minimum up and down times restrictions for each generator.} Constraints \eqref{domainMaintenance}, \eqref{domainOperationalBinary}, \eqref{domainStartupContinuous} and \eqref{domainOperationalContinuous} are for binary and nonnegativity restrictions. Note that binary start-up variables are relaxed to continuous variables since they are associated with positive cost coefficients in \eqref{objective}. Although this relaxation will expand the feasible region, it does not change the optimal value of our stochastic optimization problem (see, \citet{ONeillRelaxation}).

\subsection{Degradation Signal Modeling} \label{ss:DegradationSignalModelingandScenarioGeneration}
In this section, we explain our modeling framework for the RLDs of system components under a Bayesian setting. Figure \ref{fig:degrSignal} shows an example of degradation signal progress which has two main levels: Phase I and Phase II \citep{Nagi2006}. Phase I is referred to as the ``non-defective'' stage when a component does not show any sign of failure whereas Phase II is known as the ``defective'' stage in which degradation signal of system components aggressively deteriorates and results in failure when degradation signal reaches some predetermined threshold $\Lambda$. When modeling failure uncertainty of system components, we focus on the defective stage of their degradation signals.
% This file was created by tikzplotlib v0.9.8.
\begin{figure}[H]
\centering
\begin{tikzpicture}[scale=1]
\draw[gray, dashdotted, xstep=2cm, ystep=1.25cm] (0,0) grid (7,5.5);
\node at (0.8,5.25) {\small Phase I};
\node at (3.35 ,5.25) {\small Phase II};
\begin{axis}[
tick align=outside,
tick pos=left,
x grid style={white!69.0196078431373!black},
xlabel={\small Time},
xtick style={color=black},
y grid style={white!69.0196078431373!black},
ylabel={\small Signal amplitude},
ytick style={color=black}
]
\addplot [semithick, blue]
table {%
0 0
1 0.5
2 0
3 0.5
4 0
5 0.5
6 1
7 0.5
8 1.5
9 2
10 0
11 0.5
12 0
13 0.5
14 0
15 0.5
16 1
17 0.5
18 1.5
19 2
20 2
21 2.5
22 1.5
23 2.5
24 2
25 2.5
26 3
27 3.5
28 2.5
29 2
30 2
31 2.5
32 1.5
33 2.5
34 2
35 2.5
36 3
37 3.5
38 2.5
39 2
40 19.78249795
41 25.05951469
42 21.37352506
43 23.91028474
44 25.24197844
45 29.0459885
46 29.16189573
47 37.14910403
48 43.12502969
49 26.94568566
50 32.94775989
51 46.46328885
52 24.32785143
53 19.39564064
54 44.86262178
55 49.69923663
56 36.87459119
57 39.99465616
58 39.08648676
59 39.03561782
60 61.2863401
61 37.36292726
62 51.46584141
63 43.78654102
64 37.03374867
65 73.08469478
66 53.69020627
67 66.18947423
68 40.92200264
69 54.48554732
70 80.50571863
71 112.90786139
72 37.7643902
73 69.44604091
74 81.73381587
75 65.19713885
76 93.03320857
77 63.53540673
78 83.80691675
79 72.79074171
80 107.3188147
81 72.79917981
82 80.56049011
83 107.61557276
84 70.16394953
85 93.61520972
86 56.75853451
87 73.07611539
88 105.00403384
89 77.86691769
90 88.72940997
91 112.45555107
92 90.97179631
93 123.0826314
94 111.4758443
95 118.0247885
96 137.80104076
97 118.79524664
98 148.31992952
99 135.7699903
100 104.21742077
101 148.290765
102 125.83436735
103 111.81327973
104 129.50253659
105 151.33485755
106 137.13165331
107 162.40200748
108 156.14048531
109 143.9082656
110 106.72706123
111 129.54778744
112 137.52539882
113 155.16717508
114 155.36872796
115 144.93585962
116 145.28861024
117 162.71698328
};
\end{axis}
\end{tikzpicture}
\centering
\caption{An example degradation signal.}
\label{fig:degrSignal}
\end{figure}
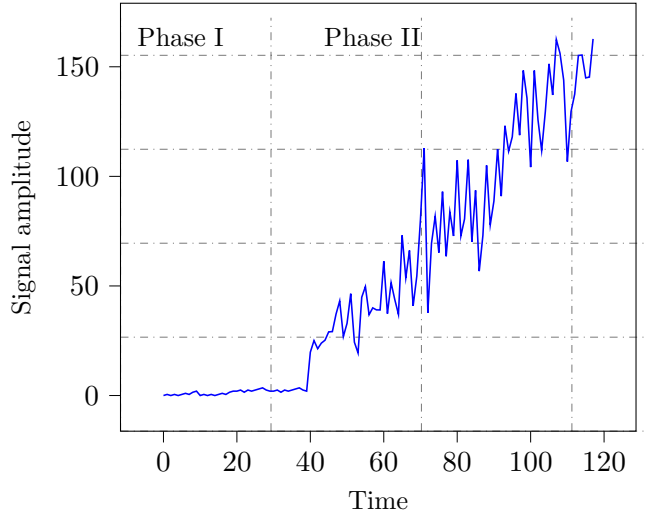
In this paper, we assume that we can identify degradation signal of each component using real-time sensor information. Then, we model each degradation signal as a stochastic continuous process and denote this process as $\mathcal{D} = \{D_h(t): t \ge 0 \}$ with $D_h: \mathbb{R} \rightarrow \mathbb{R}$ given by
\begin{align} \label{eq:degradationSignal}
    D_h(t) = \upsilon_h + \beta_h t + \sigma_h W(t),
\end{align} where $\upsilon_h$ is the initial signal amplitude and $\beta_h$ is the linear drift parameter for each $h \in \mathcal{H}$. The independent stochastic parameters $\upsilon_h$ and $\beta_h$ of the degradation signal model are presumed to follow some prior distributions which are assumed to be the same across every population (i.e., generators and transmission lines). The stochastic process $\mathcal{W} = \{W(t) : t \ge 0\}$ is the standard Brownian motion with $W(0) = 0$. We also assume that the standard deviation $\sigma_h$ of degradation signal of component $h$ is known and constant over the planning horizon. Furthermore, the standard deviation has the same value across every population. %As components degrade, their degradation signals tend to drastically increase. 
When the degradation signal level of a component exceeds the predefined threshold $\Lambda$, we assume that it fails. In particular, we define the failure time of component $h$ as the first passage time, i.e., $\xi_h = \min \{t \ge 0: D_h(t) \ge \Lambda\}$.

Next, we estimate the RLDs by using Bayesian inference combining both degradation signal model parameters and real-time condition-based sensor information. For every $h \in \mathcal{H}$, we assume that the prior distribution of the initial signal amplitude is $\upsilon_h \sim \mathcal{N} (\mu_0, \kappa^2_0)$ and the prior distribution of the linear drift is $\beta_h \sim \mathcal{N} (\mu_1, \kappa^2_1)$.  
%We also assume that the unknown parameters $\upsilon_h$ and $\beta_h$ follow the same family of prior distributions across each population.
We let $D_h(t_h^i)$ be the degradation signal level of component $h$ at time $t_h^i$. We define $D_h^i$ as the increment between times $t_h^i$ and $t_h^{i-1}$, given by $D_h^i = D_h(t_h^i) - D_h(t_h^{i-1})$ for $i =2, \dots, t^k_h$ with $D_h^1 = D_h(t_h^1)$ where $t^k_h, \ k \in \mathbb{Z_+}$ is the random observation time of the degradation signal of component $h$. Given the observed degradation signal data, we can mathematically derive the posterior distribution of the initial amplitude $\upsilon_h$ and the linear drift $\beta_h$ for every $h \in \mathcal{H}$ with a closed form expression (Proposition 2 by \citet{Nagi2005}). By using the posterior distribution of the drift parameter $\beta_h$, we estimate the RLD of component $h$ as in Proposition \ref{prop:inverseGauss}.
\begin{proposition} \label{prop:inverseGauss}
Given the observed signal increments $D_h^i$ at time $i =t_h^1, \dots ,t^k_h$ with prior parameters $(\upsilon_h, \beta_h)$, and the predefined failure threshold $\Lambda$, the posterior mean of the drift parameter of component $h$ is given by:
\begin{align} \label{posteriorDrift}
\mu_h' = \frac{(\kappa_1^2 \sum_{i=1}^{t^k_h} D_h^i + \mu_1 \sigma_h^2)(\kappa_0^2 + \sigma_h^2 {t}_h^1)- \kappa^2_1(D_h^1 \kappa^2_0 + \mu_0 \sigma_h^2 {t}_h^1) }{(\kappa_0^2 + \sigma^2_h {t}_h^1)(\kappa_1^2 {t}_h^k + \sigma_h^2) - \kappa_0^2 \kappa_1^2 {t}_h^1}.
\end{align}
Then, the remaining lifetime of component $h$ at time $t^k_h$ follows the inverse Gaussian distribution $\mathcal{IG}(t + t^k_h | \mu, \lambda)$ with shape parameter $\mu = \frac{\Lambda - \sum_{i=1}^{t^k_h} D_h^i}{\mu_h'}$ and scale parameter $\lambda = \frac{(\Lambda -\sum_{i=1}^{t^k_h} D_h^i )^2}{\sigma_h^2}$.
\end{proposition}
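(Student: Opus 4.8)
The plan is to prove the two assertions in turn: the closed-form posterior mean $\mu_h'$ of the drift in \eqref{posteriorDrift}, and then the inverse Gaussian law of the remaining lifetime. For the drift, I would work entirely within the conjugate Gaussian framework. Conditioned on $(\upsilon_h,\beta_h)$, the degradation model \eqref{eq:degradationSignal} renders the observed increments independent normals: the first observation $D_h^1 = D_h(t_h^1)$ is $\mathcal{N}(\upsilon_h + \beta_h t_h^1,\, \sigma_h^2 t_h^1)$ and carries information about both parameters, whereas each later increment $D_h^i$, $i \ge 2$, is $\mathcal{N}(\beta_h(t_h^i - t_h^{i-1}),\, \sigma_h^2(t_h^i - t_h^{i-1}))$ and informs only the drift. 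Combining these Gaussian likelihoods with the independent normal priors $\upsilon_h \sim \mathcal{N}(\mu_0,\kappa_0^2)$ and $\beta_h \sim \mathcal{N}(\mu_1,\kappa_1^2)$ produces a bivariate normal posterior for $(\upsilon_h,\beta_h)$, which is exactly the closed form of \citet{Nagi2005}. The quantity I want is its $\beta_h$-marginal mean, i.e. the second coordinate of $P^{-1}b$, where $P$ is the $2\times 2$ posterior precision matrix and $b$ the vector of linear coefficients of the log-posterior. A short computation shows $\det P$ is proportional to the denominator of \eqref{posteriorDrift}, and clearing the common factor from the second coordinate of $P^{-1}b$ returns its numerator; only the aggregate statistics $D_h^1$ and $\sum_{i=1}^{t_h^k} D_h^i = D_h(t_h^k)$ survive, since the later increments enter the posterior of $\beta_h$ only through their sum and the total elapsed time.

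For the remaining lifetime, I would invoke the Markov property of the process \eqref{eq:degradationSignal}. The residual life is the first passage of the forward process $D_h(t_h^k+s) - D_h(t_h^k) = \beta_h s + \sigma_h\big(W(t_h^k+s) - W(t_h^k)\big)$ to the residual distance $\Lambda - D_h(t_h^k) = \Lambda - \sum_{i=1}^{t_h^k} D_h^i$, and independence of Brownian increments lets me restart the problem afresh from the currently observed level, the past non-crossing being already encoded in that level. With the drift fixed at its posterior point estimate $\mu_h'$ and diffusion coefficient $\sigma_h$, this is a Brownian motion with positive drift, whose first passage time to a fixed positive level is classically inverse Gaussian. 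Matching the first-passage density to the standard $\mathcal{IG}$ parametrization yields $\mu = (\Lambda - \sum_{i=1}^{t_h^k} D_h^i)/\mu_h'$, the residual distance over the drift, and $\lambda = (\Lambda - \sum_{i=1}^{t_h^k} D_h^i)^2/\sigma_h^2$, the squared residual distance over the diffusion variance, which are precisely the parameters claimed.

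I expect the first part to be the main obstacle. Although $\upsilon_h$ and $\beta_h$ are independent a priori, the single shared observation $D_h^1$ makes them correlated a posteriori, so the posterior mean of the drift is not the drift-only update but must integrate out $\upsilon_h$; tracking this coupling is exactly what produces the particular combination of $\kappa_0^2,\kappa_1^2,\sigma_h^2,t_h^1$ and $t_h^k$ in \eqref{posteriorDrift}. Since \citet{Nagi2005} already supply the joint posterior in closed form, I would lean on their result and reduce this part to extracting the $\beta_h$-marginal mean. One modelling point worth flagging is that the statement uses $\mu_h'$ as a plug-in (deterministic) drift rather than integrating the inverse Gaussian over the full posterior of $\beta_h$; the clean $\mathcal{IG}$ conclusion holds under this point-estimate reading, as confirmed by $\lambda$ depending on $\sigma_h$ alone and not on the posterior variance of $\beta_h$.
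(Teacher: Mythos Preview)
Your proposal is correct and, in fact, more detailed than what the paper itself supplies. The paper does not give an explicit proof of this proposition: it states the result and defers to Proposition~2 of \citet{Nagi2005} for the closed-form posterior, then asserts the inverse Gaussian remaining-lifetime distribution without further argument. Your plan---extracting the $\beta_h$-marginal mean from the bivariate Gaussian posterior of \citet{Nagi2005}, then invoking the classical first-passage law of drifted Brownian motion with the drift plugged in at its posterior point estimate---is exactly the derivation the paper leaves implicit, and your caveat about the plug-in reading of $\mu_h'$ (rather than integrating over the full posterior of $\beta_h$) is the correct way to reconcile the clean $\mathcal{IG}$ form with the Bayesian setup.
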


\subsection{Decomposition of the Stochastic Optimization Model} \label{ss:DecompositionStochOPT}
In this section, we present a compact formulation of the joint chance-constrained stochastic optimization model \eqref{eq:optimizationModel}. For ease of notation, we let $v$ be the decision vector containing the elements $v_{ht}$ corresponding to the maintenance decision of component $h \in \mathcal{H}'$ in maintenance period $t\in\mathcal{\bar T}$. Additionally, we define the binary vector $\eta^k$ consisting of commitment status, switch status and shut-down decisions under scenario $k$, and the continuous vector $\phi^k$ denoting the demand curtailment, voltage angle, power flow, power generation and start-up decisions under scenario $k$. We also let $E_{\mathcal{H}}(v)$ be the intersection of $E_{\mathcal{G}}(w)$ and $E_{\mathcal{L}}(z)$ (introduced in 
Section \ref{ss:DegradationSignalModelingandScenarioGeneration}). The compact formulation now can be stated as follows:
%
%
% Compact formulation
%
%
\begin{subequations}  \label{eq:compactModel}
\begin{align}
\min &\hspace{0.5em} \sum_{k \in \mathcal{K}} \pi^k \big( a^\top \eta^k + b^\top \phi^k + c^\top_{k} v)\\
\mathrm{s.t.}
&\hspace{0.5em} \mathbb{P}(E_{\mathcal{H}}(v)) \ge 1-\alpha \label{constr:compact_jointChance}\\
&\hspace{0.5em} A v= l \label{constr:compact_totalMaintenanceNumberMP}\\
&\hspace{0.5em} B^k v + D \eta^k \leq n    &k& \in \mathcal{K} \label{constr:compact_coupling} \\
&\hspace{3.60em} F \eta^k + G \phi^k \leq r  &k& \in \mathcal{K} \label{constr:compact_operational} \\
& \hspace{0.5em} v \in \{0,1\}^{|\mathcal{H}'| \times |\mathcal{\bar T}|} \label{constr:compact_sign_1}\\
&\hspace{0.5em} \eta^k \in \{0,1\}^{ (2|\mathcal{G}| + |\mathcal{L'}|) \times |\mathcal{T}| \times |\mathcal{S}|} &k& \in \mathcal{K}. \label{constr:compact_sign_2}
\end{align}
\end{subequations}
Next, we explain the correspondence between constraints in \eqref{eq:compactModel} and constraints in \eqref{eq:optimizationModel}. Constraint \eqref{constr:compact_jointChance} corresponds to the joint chance-constraint \eqref{chanceConstr} for which we propose two different representation in Section \ref{s:reformulationsJointChance}.
Constraint $\eqref{constr:compact_totalMaintenanceNumberMP}$ refers to the maintenance constraints \eqref{limitOnGenMaintenance} and \eqref{limitOnLineMaintenance} restricting the total number of maintenance schedules for each component within the planning horizon. Constraint \eqref{constr:compact_coupling} corresponds to the coupling constraints \eqref{logicalGenPMaintenance} - \eqref{lineON} between maintenance and operational decisions. Constraint \eqref{constr:compact_operational} represents the operational constraints \eqref{flowBalance} - \eqref{minDown} and domain restrictions \eqref{domainStartupContinuous} and \eqref{domainOperationalContinuous}. Constraints \eqref{constr:compact_sign_1} and \eqref{constr:compact_sign_2} correspond to the binary restrictions \eqref{domainMaintenance} and \eqref{domainOperationalBinary} for maintenance decision $v$ and operational decision $\eta$, respectively.

Next, we introduce the set of feasible maintenance decisions as $ \mathcal{\hat V} = \{ v \in \{0,1\}^{|\mathcal{H'}| \times |\mathcal{\bar T}|}: \eqref{constr:compact_jointChance}, \eqref{constr:compact_totalMaintenanceNumberMP}\}$. We can then reformulate \eqref{eq:compactModel} as a two-stage stochastic program given by:
\begin{align} \label{eq:twoStageCompact}
\min_{v} &\hspace{0.5em} \Big\{ \sum_{k \in \mathcal{K}} \pi^k \Big(c_k^{\top} v + \mathcal{Q}(v, \xi_k)\Big):  v \in \mathcal{\hat V} \Big\}, 
\end{align}
where $\mathcal{Q}(v,\xi_k)$ is the recourse function under scenario $k$ defined as follows:
\begin{align}  \label{eq:recourseFunction}
\mathcal{Q}(v,\xi_k) = \min_{\eta^k, \phi^k} \Big\{ &a^\top \eta^k + b^\top \phi^k : \ D \eta^k \leq n - B v, \ F \eta^k + G \phi^k \leq r, \ \eta^k \in \{0,1\}^{ (2|\mathcal{G}| + |\mathcal{L'}|) \times |\mathcal{T}| \times |\mathcal{S}|} \Big\}.
\end{align}
The first-stage decisions correspond to the maintenance decisions and the second-stage decisions correspond to the operational decisions. Note that the first-stage decisions are restricted to be binary and the second-stage decisions are restricted to be mixed-integer. We also denote the expected recourse function as $\mathcal{Q}(v, \xi)$ given by $\sum_{k \in \mathcal{K}} \pi^k \mathcal{Q}(v, \xi_k)$. 

Even with a small number of failure scenarios, the presented two-stage stochastic program \eqref{eq:twoStageCompact} can still be challenging to solve. Given a first-stage maintenance decision $v$ and a realization of random failure times $\xi_k$, we observe that maintenance periods under scenario $k$ become independent from each other. Fortunately, this allows us to further decompose each scenario subproblem into smaller and independent subproblems. We refer to this property as {\it time-decomposability} of scenario subproblems. We can formulate each smaller scenario subproblem under scenario $k$ by replacing \eqref{eq:recourseFunction} with $ \sum_{t \in \mathcal{T}}  \mathcal{Q}_t(v, \xi_k)$ where $\mathcal{Q}_t(v, \xi_k)$ is defined as follows:
\begin{align}  \label{eq:recourseFunctionDecomposed}
\mathcal{Q}_t(v,\xi_k) = \min_{\eta^k_t, \phi^k_t} \Big\{ &a^\top_t \eta^k_t + b^\top_t \phi^k_t : \ D_t \eta^k_t \leq n_t - B_t v, \ F_t \eta^k_t + G_t \phi^k_t \leq r_t, \ \eta^k \in \{0,1\}^{ (2|\mathcal{G}| + |\mathcal{L'}|) \times |\mathcal{S}|} \Big\}. 
\end{align}
Before moving to the next section, let us first obtain an equivalent MIP formulation for \eqref{eq:twoStageCompact} by introducing an auxiliary variable $\theta^k$ for the recourse function $\mathcal{Q}(v, \xi_k)$ for $k \in \mathcal{K}$. Consider the following mixed-integer master problem:
\begin{subequations} \label{eq:masterProblem}
\begin{align}
\min_{v,\theta} &\hspace{0.5em} \sum_{k \in \mathcal{K}} \pi^k (c_k^{\top}v + \theta^k) \label{constr:objMP}\\ 
\mathrm{s.t.}
&\hspace{0.5em} v \in \mathcal{\hat V} \label{constr:decompFeasibleRegion}\\
&\hspace{0.5em} \theta \ge L \label{constr:lowerBoundMP}\\
&\hspace{0.5em} (v, \theta) \in \Theta
\end{align}
\end{subequations}
As before, the decision vector $v$ corresponds to the maintenance decisions. The decision vector $\theta$ is used as an approximation of the recourse function. Constraints \eqref{constr:lowerBoundMP} are used to impose a lower bound on the recourse function. A trivial lower bound on the recourse function $\mathcal{Q}(v, \xi_k)$ is zero since all cost coefficients and their corresponding variables are nonnegative under scenario subproblem $k$. However, one can obtain a valid (and possibly better) lower bound $L^k$ on $\mathcal{Q}(v, \xi_k)$ by solving the following linear program:
\begin{align} \label{lowerBoundLP}
    L^k = \min_{\eta^k, \phi^k, v} \{a^\top \eta^k + b^\top \phi^k : \ D \eta^k  + B v \leq n , \ F \eta^k + G \phi^k \leq r, \eqref{constr:compact_totalMaintenanceNumberMP}, v \in [0,1]^{|\mathcal{H'}| \times |\mathcal{\bar T}|}\},
\end{align}
under scenario subproblem $k \in \mathcal{K}$. We refer to set $\Theta$ as the set of optimality cuts added to the relaxed master problem until some iteration. An optimality cut represents all possible values of the recourse function evaluated at different feasible solution. In particular, set $\Theta$ is called valid if for every $v \in \mathcal{\hat V}, (v, \theta) \in \Theta$ implies that $\theta^k \ge \mathcal{Q}(v, \xi_k)$ for $k \in \mathcal{K}$. Note that since \eqref{eq:twoStageCompact} has relatively complete recourse, we are not particularly interested in generating feasibility cuts which enforce the feasibility of each scenario subproblem. Suppose that  a valid and finite set of optimality cuts $\Theta$ indeed exists for the joint chance-constrained stochastic program \eqref{eq:twoStageCompact}, \eqref{eq:masterProblem} is then equivalent to \eqref{eq:twoStageCompact}. We can also obtain a different equivalent MIP formulation for \eqref{eq:twoStageCompact} by utilizing the time-decomposability of scenario subproblems. For this purpose, we replace $\theta^k$ in \eqref{constr:objMP} with $\sum_{t \in \mathcal{T}} \theta^k_t$. Similarly, one can obtain a valid lower bound $L^k_t$ on $\mathcal{Q}_t(v, \xi_k)$ by utilizing time-decomposability and solving \eqref{lowerBoundLP} under scenario $k$ in maintenance period $t$. 

We observe that set $\Theta$ may contain exponentially many constraints. Instead of adding all of these cuts to the problem, it might be more practical to consider a so-called relaxed master problem containing a small subset of $\Theta$ (possibly an empty set). In the next section, we propose an iterative algorithm with various algorithmic enhancements where the relaxed master problem is solved until we obtain the optimal solution to the two-stage joint chance-constrained stochastic program \eqref{eq:twoStageCompact}.

\section{Solution Methodology} \label{s:solutionMethodology}
In this section, we first explain our decomposition algorithm to solve \eqref{eq:twoStageCompact} and explain various algorithmic enhancements in detail (Section \ref{ss:decompositionAlgorithm}). Two different representations of the joint chance-constraint are explained in Section \ref{s:reformulationsJointChance}. The set of optimality cuts used in our proposed algorithm are presented in Section \ref{s:optimalityCutFamilies}. We further present a prepossessing step to address the potential redundancy in transmission flow limits in Section \ref{s:flowLimitAnalysis}. Finally, we use a SAA approach within the proposed decomposition algorithm to obtain statistical bounds on the true optimality gap in Section \ref{s:sampleAverageApproximation}.

\subsection{Decomposition Algorithm} \label{ss:decompositionAlgorithm}

We benefit from the features of the integer L-shaped method to develop a decomposition-based algorithm (Algorithm \ref{alg:decomposition}) to solve our two-stage joint chance-constrained stochastic program \eqref{eq:twoStageCompact}. Given a first-stage decision, solving many similar mixed-integer scenario subproblems can be computationally expensive. We propose an algorithmic enhancement to avoid this situation by exploiting the time decomposability of scenario subproblems and identifying the ``status'' of system components.

Let us first devise a concept of status of system components, which is utilized in our decomposition algorithm. This concept is used to characterize the availability of system components. Given a feasible maintenance decision $v \in \mathcal{\hat V}$, the status of component $h \in \mathcal{H'}$, denoted by $u_{ht}^k(v)$, takes a value of $1$ if the corresponding component is available in maintenance period $t \in \mathcal{T}$ under scenario $k \in \mathcal{K}$, and $0$ otherwise. We corroborate this concept with a simplified instructive example. Consider our stochastic optimization problem \eqref{eq:optimizationModel} under a single scenario with $|\mathcal{G'}| = |\mathcal{L'}| = 1$, $|\mathcal{T}| = 4$ and $(\tau^{p}_{{\scriptscriptstyle \mathcal{G}}}, \tau^{c}_{{\scriptscriptstyle \mathcal{G}}},\tau^{p}_{{\scriptscriptstyle \mathcal{L}}}, \tau^{c}_{{\scriptscriptstyle \mathcal{L}}}, \xi_1^1, \xi_2^1) = (1,2,1,2,1,4)$. Suppose we are given a feasible maintenance decision $v = [0,1,0,0,0;0,1,0,0,0]$, which corresponds to the case where the components enter maintenance in the second period and let these components correspond to the  generator and the transmission line, respectively. The status vector of component $1$ is $[0,0,1,1]$ as the scheduled predictive maintenance at period 2 is at a later period than its failure time at period 1. Thus, it is under corrective maintenance and is unavailable for two consecutive maintenance periods. On the other hand, the status vector of component $2$ is $ [1,0,1,1]$ since the scheduled predictive maintenance at period 2 prevents the failure at period 4, and this component is only unavailable for one maintenance period. By combining these status vectors of components, we obtain  $u^1(v) = [0,0,1,1;1,0,1,1]$ where column $t$ consists of the status of components in maintenance period $t$ for every $t \in \mathcal{T}$. Now, suppose we are given another feasible maintenance decision under the same single scenario problem as $ \tilde v = [0,0,1,0,0;0,0,0,1,0]$. Similarly, we obtain $u^1(\tilde v) = [0,0,1,1;1,1,0,1]$. In the remainder of our paper, we let $u^k_t(v)$ denote the column $t$ of $u^k(v)$ given maintenance decision $v$ for every $k \in \mathcal{K}$ and $t \in \mathcal{T}$. Observe that $u^1_1(v)$ and $u^1_4( v)$ are the same with $u^1_1(\tilde v)$ and $u^1_4(\tilde v)$, respectively. This implies that the components have the same status for the first and fourth periods under this scenario for the solutions $v$ and $\tilde{v}$. To generalize, decomposing a scenario subproblem into maintenance periods given different feasible maintenance decisions may yield to some identical scenario subproblems depending on the availability of the components. Eventually, we exploit this observation and adapt the concept of status in our decomposition algorithm. This allows us to uniquely determine the nature of each scenario subproblem given different feasible maintenance (first-stage) decisions. 

Next, we explain our proposed decomposition-based algorithm to solve \eqref{eq:twoStageCompact}, which is summarized in Algorithm \ref{alg:decomposition}. For representing the joint chance-constraint \eqref{constr:compact_jointChance}, this algorithm considers both an exact reformulation and a deterministic safe approximation, which are further explained in Section \ref{s:reformulationsJointChance} in detail. When the exact reformulation is used, Algorithm \ref{alg:decomposition} employs a cutting-plane method over the joint chance-constraint \eqref{constr:compact_jointChance} as follows: At the beginning of each iteration, we obtain a maintenance decision $v$ by solving the relaxed master problem \eqref{eq:masterProblemCover}. We call the separation subroutine \texttt{RepresentChance$(v)$} with input $v$ to check whether this maintenance decision is feasible with respect to \eqref{constr:compact_jointChance}. When the infeasibility of $v$ is detected, a cutting plane is generated and added to set $\mathcal{C}$. This separation subroutine and violated cover inequalities are further explained in Section \ref{ss:exactReformulation}. When the deterministic safe approximation is used, we always obtain a feasible maintenance decision $v \in \mathcal{\hat V}$ since this approximation provides a conservative representation of the joint chance-constraint. After obtaining a feasible maintenance decision $v \in \mathcal{\hat V}$ and failure uncertainty is revealed for every scenario, Algorithm \ref{alg:decomposition} proceeds to the second-stage. For storing the status vectors in period $t$, we define set $\Upsilon_t$ which corresponds to the set of unique status vectors identified at that iteration. Also, we define set $\Psi_t$ to represent the set of all unique status vectors until termination within Algorithm \ref{alg:decomposition}. After identifying the unique status vectors (Step \ref{alg:decompStatus}) for every maintenance period $t \in \mathcal{T}$, Algorithm \ref{alg:decomposition} continues to solve only the subproblems with these newly identified status vectors (Step \ref{alg:decompStatusParallel}). By restricting ourselves to set $\Psi_t$, it suffices to solve $ \sum_{t \in \mathcal{T}} |\Psi_t|$ many scenario subproblems until termination, which could be significantly less than the total number of scenario subproblems to be solved throughout the algorithm. Finally, Algorithm \ref{alg:decomposition} initializes \texttt{OptimalityCut($v, \xi, L$)} with input $v, \xi$ and $L$ to generate and add optimality cuts to set $\Theta$ in Step \ref{alg:decompOptimalityCuts}. Algorithm \ref{alg:decomposition} continues to iterate until a relative optimality gap within a tolerance $\epsilon$ is achieved. The implementation of our decomposition-based algorithm is in parallel in order to achieve computational efficiency. At the initialization of Algorithm \ref{alg:decomposition} (Step \ref{alg:decompInitial}), linear relaxations of the subproblems are solved to obtain lower bounds on the objectives of these problems. Due to the independence of scenario subproblems and time-decomposability, these subproblems are solved within a distributed environment. Similarly, scenario subproblems corresponding to the unique status vectors (Step \ref{alg:decompStatusParallel}) are solved to optimality with parallelization. 
%
%
% ALGORITHM DECOMPOSITION
%
%
%
\begin{algorithm}
\caption{Decomposition}
\label{alg:decomposition}
\begin{algorithmic}[1]
\REQUIRE $A, B, D, F, G, a,b,c, l, n, r, \epsilon, \mathcal{Q} : (v, \xi) \rightarrow \mathbb{R}$.
\ENSURE $\epsilon$-optimal solution $v^*$ and $\epsilon$-optimal objective value $c^*$.
\STATE Set $UB = \infty, LB = - \infty, \Theta = \mathcal{C} = \emptyset, \Psi_t = \emptyset$ for all $t \in \mathcal{T}$.
\STATE Compute the lower bound $L$ of $\mathcal{Q}(v, \xi)$ \text{\it (in parallel)}. \label{alg:decompInitial}
\WHILE{$LB / UB < 1 - \epsilon$}
\IF{{the joint chance-constraint representation is exact}} 
\STATE Solve a relaxed master problem \eqref{eq:masterProblemCover} to obtain a solution $(v,\theta)$. \label{alg:decompExactRoutine}
\STATE $flagFeasible \gets \texttt{RepresentChance($v$)}$. \label{alg:decomp_chanceExact}
\ELSE 
\STATE  Solve a relaxed master problem  \eqref{eq:masterProblem} to obtain a feasible solution $(v,\theta)$.
\STATE $flagFeasible \gets {\bf true}$.
\ENDIF
\IF{$flagFeasible$ is {\bf true}}
\STATE ${LB} \gets  max (\sum_{k \in \mathcal{K}} \pi^k (c_k^{\top}v + \sum_{t \in \mathcal{T}} \theta^k_t), \ LB)$.
\STATE Identify the status vector $u_t^k(v) \in \{0,1\}^{|\mathcal{H}'|}$ for $(k, t) \in \mathcal{K} \times \mathcal{T}$. \label{alg:decompStatus}
\STATE Set $\Upsilon_t = \Gamma_t =\emptyset$ for all $t \in \mathcal{T}$.
%%%
%\STATE Identify the unique status vectors in period $t$ and add them to set $\Upsilon_t$ for all $t \in \mathcal{T}$.
%\STATE Identify the scenario subproblem tuples which have the same status vectors in period $t$ and add them to set $\Gamma_t$ for all $t \in \mathcal{T}$.
\FORALL{$(k,t) \in \mathcal{K} \times \mathcal{T}$}
\IF{$u_t^k(v) \in \Psi_t$}
\STATE Find an index $\hat k$ such that $u_t^k(v) = u_t^{\hat{k}}(v) \in \Psi_t$, and $ \Gamma_t \gets \Gamma_t \bigcup \{( \hat{k}, k) \}$. %\{Store the index of subproblem $k$ in period $t$ associated with the status of subproblem $\hat k$ in period $t$.\}.
%\{Store scenario subproblem indexes with the same status.\}.
\ELSE 
\STATE $\Upsilon_t \gets  \Upsilon_t \bigcup \{u_t^k(v)\}$.% \{Add the unique status identified to set $\Upsilon_t$.\}.
\ENDIF
\ENDFOR
\STATE $\Hat{\Upsilon} \gets \bigcup_{t \in \mathcal{T}} \Upsilon_t$, and $\Psi_t \gets \Psi_t \bigcup \Upsilon_t $ for all $t \in \mathcal{T}$.
%%%
\FORALL{$ u_t^k(v) \in \Hat{\Upsilon}$ \text{\it (in parallel)}} \label{alg:decompStatusParallel}
\STATE Solve scenario subproblem \eqref{eq:recourseFunctionDecomposed} associated with $u_t^k(v)$ and obtain $\mathcal{Q}_t(v,\xi_k)$.
%\ENDFOR
\ENDFOR
\FORALL{$t \in \mathcal{T}$}
\STATE $\mathcal{Q}_t(v, \xi_{\hat{k}}) \gets \mathcal{Q}_t(v, \xi_k)$ for all $( \hat{k}, k) \in \Gamma_t$. %\{Map the recourse values of the unsolved subproblems in set $\Gamma_t$.\}.
\ENDFOR
\STATE $c^* \gets \sum_{k \in \mathcal{K}} \pi^k (c_k^{\top} v + \sum_{t \in \mathcal{T}} \mathcal{Q}_t(v, \xi_k))$.
\IF{$UB > c^*$}
\STATE $(UB, v^*) \gets (c^*, v)$.
\ENDIF
\STATE Initialize \texttt{OptimalityCut($v, \xi, L$)} and add the optimality cut to set $\Theta$. \label{alg:decompOptimalityCuts}
\ELSE
\STATE {\bf continue}
%\STATE Initialize \texttt{OptimalityCut($v, \xi, L$)} and add the cover inequality to set $\mathcal{C}$.
\ENDIF
\ENDWHILE
\RETURN Optimal solution $v^*$ and optimal value $c^*$.
\end{algorithmic}
\end{algorithm}

%
%
%
% chance-constraint REPRESENTATION 
%
%
%
\subsection{Reformulations of the Joint Chance-Constraint} \label{s:reformulationsJointChance}
In this section, we specify two different approximations of the joint chance-constraint \eqref{constr:compact_jointChance}. First recall the set of feasible maintenance decisions defined as $\mathcal{\hat V} = \{ v \in \{0,1\}^{|\mathcal{H'}| \times |\mathcal{\bar T}|}: \eqref{constr:compact_jointChance},  \eqref{constr:compact_totalMaintenanceNumberMP}\}$. We first obtain an approximation of $\mathcal{\hat V}$ by using a probability oracle which provides the exact value of the left-hand side of the joint chance-constraint \eqref{constr:compact_jointChance} and further, prove that this approximation is in fact exact. However, this exact representation requires an exponential reformulation of $\mathcal{\hat V}$. Thus, we employ a cutting-plane method using the separation subroutine \texttt{RepresentChance($v$)} within Algorithm \ref{alg:decomposition} to efficiently solve our two-stage joint chance-constrained stochastic program \eqref{eq:twoStageCompact}. Still, this cutting-plane method may not be efficient as the number of components increases. To address this issue, we also develop a second-order cone programming (SOCP) based deterministic safe approximation of \eqref{constr:compact_jointChance}. 

\subsubsection{Exact Reformulation} \label{ss:exactReformulation} Suppose we are given a feasible maintenance decision $v = (w,z) \in \mathcal{\hat V}$. Let us first consider the quantity $R_i(w) = \sum_{t \in \mathcal{\bar T}} \zeta_{it} w_{it}$ for every $i \in \mathcal{G}$ as defined in Section \ref{ss:StochasticOptimizationModel}. Here, we calculate the failure probability for generators $i \in \mathcal{G}$ within the planning horizon. Recall constraint \eqref{limitOnGenMaintenance} in the joint chance-constrained stochastic optimization model \eqref{eq:optimizationModel}, that is, $\sum_{t \in \mathcal{\bar T}} w_{it} = 1$ for $i \in \mathcal{G'}$. This implies that the quantity $R_i(w)$ is a Bernoulli random variable with the success probability $\mathbb{P}(\xi_i \le m_i(w))$ where $ m_i(w)$ is the maintenance period in which a maintenance is scheduled for generator $i \in \mathcal{G'}$, and if no maintenance is scheduled, we let $m_i (w)$ be $|\mathcal{T}|$ for generator $i \in \mathcal{G'}$. Recall also the following assumption that there is no maintenance scheduled for generators $i \in \mathcal{G''}$, thus, we also let $m_i (w)$ be $|\mathcal{T}|$ for $i \in \mathcal{G''}$. Then, the quantity $R_i(w)$ is also a Bernoulli random variable with the success probability $\mathbb{P}(\xi_i \le |\mathcal{T}|)$ for $i \in \mathcal{G''}$. 

Let us now consider the quantity $R_{ij}(z) = \sum_{t \in \mathcal{\bar T}} \zeta_{ijt} z_{ijt}$ for every $(i,j) \in \mathcal{L}$ as defined in Section \ref{ss:StochasticOptimizationModel}. The similar results also hold for the transmission lines with constraint \eqref{limitOnLineMaintenance} in the stochastic optimization model. Therefore, the quantity $\xi_{ij}(z)$ is a Bernoulli random variable with the success probability $\mathbb{P}(\xi_{ij} \le  m_{ij}(z))$ for $(i,j) \in \mathcal{L'}$. Similarly, the quantity $R_{ij}(z)$ is also a Bernoulli random variable with the success probability $\mathbb{P}(\xi_{ij} \le |\mathcal{T}|)$ for $(i,j) \in \mathcal{L''}$.

Next, we define the following random variables $\hat \zeta_{\mathcal{G}}(w) = \sum_{i \in \mathcal{G}} R_i(w)$ and $\hat \zeta_{\mathcal{L}}(z) = \sum_{(i,j) \in \mathcal{L}} R_{ij}(z)$ as the sum of independent Bernoulli random variables.
%\begin{remark} \label{remark:poisson_gen}
%Given a feasible maintenance decision $w$ for generators, the random variable $\hat \zeta_{\mathcal{G}}(w)$ follows a Poisson Binomial distribution with success probabilities $\{\mathbb{P}(R_i(w) \le \hat \rho_i(w)); \ i \in \mathcal{G}\}$.
%\end{remark}
\begin{remark} \label{remark:poissonComp}
The random variables $\hat \zeta_{\mathcal{G}}(w)$ and $\hat \zeta_{\mathcal{L}}(z)$ follow Poisson Binomial distributions with success probabilities $\{\mathbb{P}(\xi_i \le m_i(w)); \ i \in \mathcal{G}\}$ and $\{\mathbb{P}(\xi_{ij} \le m_{ij}(z)); \ (i,j) \in \mathcal{L}\}$, respectively.
\end{remark}
By Remark \ref{remark:poissonComp}, we observe that the left hand-side expression of the joint chance-constraint \eqref{constr:compact_jointChance} is equivalent to the following joint cumulative distribution function of two Poisson Binomial random variables, i.e., $\mathbb{P}(\hat \zeta_{\mathcal{G}}(w) \le \rho_{\mathcal{G}}, \hat \zeta_{\mathcal{L}}(z) \le \rho_{\mathcal{L}})$. By using the independence of these two random variables, joint chance-constraint \eqref{constr:compact_jointChance} can be rewritten as:
\begin{align}
    \mathbb{P}(\hat \zeta_{\mathcal{G}}(w) \le \rho_{\mathcal{G}}) \mathbb{P}( \hat \zeta_{\mathcal{L}}(z) \le \rho_{\mathcal{L}}) \ge 1-\alpha. \label{eq:chanceBernoulli}
\end{align}
Finally, we can recast $\mathcal{\hat V}$ as follows:
\begin{align}
    \mathcal{\hat V} = \{(w,z) \in \{0,1\}^{|\mathcal{G'}| \times |\mathcal{L'}|}: \eqref{eq:chanceBernoulli}, \eqref{constr:compact_totalMaintenanceNumberMP}\}. \label{eq:approxPoissonBinomial}
\end{align}
Next, we derive the exact representation of our joint chance-constraint \eqref{eq:chanceBernoulli}. For any maintenance decision $v = (w,z)$, we first suppose that there exists a probability oracle $\mathcal{P}(v)$ which recognizes $v$ and provides the exact value of the left hand-side of relation \eqref{eq:chanceBernoulli}. We then introduce the index set $N = \{ (h,t): h \in \mathcal{H}', t \in \mathcal{\bar T}\}$. We call a set $C \subseteq N$ a {\it scheduling set} if it satisfies the following property: 
\[\text{there exists a unique } t(h) \in \mathcal{\bar T} : (h, t(h)) \in C, \ \text{for every } h \in \mathcal{H'}.\] 
In other words, this set includes a unique maintenance period for every component from set $\mathcal{H}'$. Furthermore, we call a scheduling set $C \subseteq N$ a {\it cover} for $\mathcal{\hat V}$ if $\mathcal{P}(v(C)) < 1-\alpha$, that is, $v(C) \notin \mathcal{\hat V}$.

\begin{proposition} \label{prop:feasibleCover}
  Given a cover $C \subseteq N$, the following set of cover inequalities is valid for $\mathcal{\hat V}$:
\begin{align}
   \sum_{(h,t) \in C} v_{ht} \le |\mathcal{H}'| - 1. \label{cut:coverIneq}
\end{align}
\end{proposition}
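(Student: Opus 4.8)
The plan is to establish \eqref{cut:coverIneq} as a classical no-good (combinatorial) cut, exploiting the one-to-one correspondence between scheduling sets and feasible maintenance decisions that is enforced by the equality constraints \eqref{constr:compact_totalMaintenanceNumberMP}. The whole argument is elementary once this correspondence is made precise, so I would spend most of the effort pinning down the correspondence rather than on any analytic estimate.

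First I would fix an arbitrary $v \in \mathcal{\hat V}$ and rewrite the left-hand side of \eqref{cut:coverIneq}. Since $C$ is a scheduling set, for every component $h \in \mathcal{H}'$ there is a \emph{unique} period $t(h) \in \mathcal{\bar T}$ with $(h,t(h)) \in C$, so that
\[
\sum_{(h,t) \in C} v_{ht} = \sum_{h \in \mathcal{H}'} v_{h,t(h)}.
\]
This is a sum of exactly $|\mathcal{H}'|$ binary variables and is therefore bounded above by $|\mathcal{H}'|$; the only remaining task is to rule out the equality case for points of $\mathcal{\hat V}$.

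Next I would characterize when equality holds. The identity $\sum_{h \in \mathcal{H}'} v_{h,t(h)} = |\mathcal{H}'|$ forces $v_{h,t(h)} = 1$ for every $h \in \mathcal{H}'$. Invoking \eqref{constr:compact_totalMaintenanceNumberMP}, which imposes $\sum_{t \in \mathcal{\bar T}} v_{ht} = 1$ for each component, this in turn forces $v_{ht} = 0$ for all $t \neq t(h)$. Hence equality holds if and only if $v$ coincides with the indicator decision $v(C)$ induced by the cover. This is the single place where the one-maintenance-per-component equalities are essential, and I would flag it as the crux of the argument.

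Finally I would close using the definition of a cover: because $C$ is a cover for $\mathcal{\hat V}$, we have $\mathcal{P}(v(C)) < 1-\alpha$, i.e.\ $v(C) \notin \mathcal{\hat V}$. Consequently no $v \in \mathcal{\hat V}$ can equal $v(C)$, the equality case is excluded, and integrality of the sum yields $\sum_{(h,t)\in C} v_{ht} \le |\mathcal{H}'| - 1$. The only subtlety worth watching is that \eqref{cut:coverIneq} is claimed valid for the chance-constrained feasible set $\mathcal{\hat V}$ and not merely for the relaxation defined by \eqref{constr:compact_totalMaintenanceNumberMP} alone; this is precisely what the membership $v(C) \notin \mathcal{\hat V}$ supplies through the probability oracle, so I expect no genuine obstacle beyond this bookkeeping.
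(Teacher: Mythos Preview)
Your argument is correct and matches the paper's own proof: both pin down that the only way the sum can reach $|\mathcal{H}'|$ is if $v$ coincides with the indicator decision $v(C)$, and then invoke the cover definition $\mathcal{P}(v(C)) < 1-\alpha$ to exclude that case from $\mathcal{\hat V}$. The paper phrases this by contradiction while you argue directly, but the logical content is identical.
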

\begin{proof}[Proof.]
Let $\hat v \in \mathcal{\hat V}$. Assume for the sake of contradiction that $\sum_{(h,t) \in C} \hat v_{ht} \ge |\mathcal{H}'|$. Since $\sum_{t \in \mathcal{\bar T}} \hat v_{ht} = 1$ for every $h \in \mathcal{H}'$, it must be that $\hat v_{ht} = 1$ for every $(h,t) \in C$. Then contradiction follows immediately since $\mathcal{P}(\hat v) < 1-\alpha$ implies that $\hat v \notin \mathcal{\hat V}.$
\Halmos
\end{proof}
We can ensure that at least one of the elements of the maintenance schedule defined by the set $C$ need to be rescheduled by simply using a cover inequality in \eqref{cut:coverIneq}. In particular, if we find every valid cover inequality defined in \eqref{cut:coverIneq} for every cover which is a subset of $N$, we can obtain an equivalent formulation of $\mathcal{\hat V}$. We show this result in Proposition \ref{prop:cover_equivalence}.
\begin{proposition} \label{prop:cover_equivalence}
Consider the following set:
\begin{align} 
    \mathcal{\hat V}_1 = \{ (w,z) \in \{0,1\}^{|\mathcal{G'}| \times |\mathcal{L'}|}: \sum_{(h,t) \in C} v_{ht} \le |\mathcal{H}'| - 1, \eqref{constr:compact_totalMaintenanceNumberMP}, \forall C \subseteq N \text{ s.t. $C$ is a cover} \}. \label{eq:coverIneqSet}
\end{align}
Sets defined in \eqref{eq:coverIneqSet} and \eqref{eq:approxPoissonBinomial} are equivalent.
\end{proposition}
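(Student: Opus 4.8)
The plan is to establish the two inclusions $\mathcal{\hat V} \subseteq \mathcal{\hat V}_1$ and $\mathcal{\hat V}_1 \subseteq \mathcal{\hat V}$ separately, both resting on the observation that the shared equality constraints \eqref{constr:compact_totalMaintenanceNumberMP} force any feasible binary point to schedule exactly one maintenance period for each component $h \in \mathcal{H}'$, so that its support is precisely a scheduling set. This exact correspondence between feasible points and scheduling sets is what couples the probabilistic description in \eqref{eq:approxPoissonBinomial} to the combinatorial description in \eqref{eq:coverIneqSet}.

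For the forward inclusion, I would simply invoke Proposition \ref{prop:feasibleCover}, which states that every cover inequality \eqref{cut:coverIneq} is valid for $\mathcal{\hat V}$. Hence any $v \in \mathcal{\hat V}$ satisfies all such inequalities and, since it already satisfies \eqref{constr:compact_totalMaintenanceNumberMP} by definition of $\mathcal{\hat V}$, it meets every defining condition of $\mathcal{\hat V}_1$; therefore $v \in \mathcal{\hat V}_1$.

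The substance lies in the reverse inclusion, which I would argue by contradiction. Take $\hat v \in \mathcal{\hat V}_1$ and suppose $\hat v \notin \mathcal{\hat V}$, i.e. $\mathcal{P}(\hat v) < 1-\alpha$ so that the chance constraint \eqref{eq:chanceBernoulli} fails. Because $\hat v$ satisfies \eqref{constr:compact_totalMaintenanceNumberMP}, its support $C_{\hat v} = \{(h,t): \hat v_{ht} = 1\}$ contains a unique period $t(h)$ for each $h \in \mathcal{H}'$, so $C_{\hat v}$ is a scheduling set with $v(C_{\hat v}) = \hat v$. Since $\mathcal{P}(v(C_{\hat v})) = \mathcal{P}(\hat v) < 1-\alpha$, the set $C_{\hat v}$ is by definition a cover, and hence the inequality $\sum_{(h,t) \in C_{\hat v}} v_{ht} \le |\mathcal{H}'| - 1$ is one of the constraints defining $\mathcal{\hat V}_1$. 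But $\hat v$ assigns value $1$ to all $|\mathcal{H}'|$ coordinates indexed by $C_{\hat v}$, giving $\sum_{(h,t) \in C_{\hat v}} \hat v_{ht} = |\mathcal{H}'| > |\mathcal{H}'| - 1$, which violates this very inequality and contradicts $\hat v \in \mathcal{\hat V}_1$. Therefore $\hat v \in \mathcal{\hat V}$.

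The main obstacle is really the only nontrivial point: recognizing that the equality constraints \eqref{constr:compact_totalMaintenanceNumberMP} make the map between feasible binary points and scheduling sets a bijection on supports, so that any chance-constraint-violating point is itself the characteristic vector $v(C)$ of its own support-cover, and thus is explicitly cut off by the corresponding inequality in $\mathcal{\hat V}_1$. Once this identification is in place, both directions are immediate, and I would close by combining the two inclusions to conclude that the sets in \eqref{eq:coverIneqSet} and \eqref{eq:approxPoissonBinomial} coincide, i.e. $\mathcal{\hat V} = \mathcal{\hat V}_1$.
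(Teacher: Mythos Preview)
Your proposal is correct and follows essentially the same approach as the paper: the forward inclusion is handled by invoking Proposition~\ref{prop:feasibleCover}, and the reverse inclusion is argued by taking a point that violates the chance constraint, observing that its support (guaranteed to be a scheduling set by \eqref{constr:compact_totalMaintenanceNumberMP}) is itself a cover, and concluding that the corresponding cover inequality excludes the point from $\mathcal{\hat V}_1$. The paper phrases the second half as contraposition rather than contradiction and is somewhat terser, but the logical content is identical.
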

\begin{proof}[Proof.]
We only prove that $\mathcal{\hat V}_1 \subseteq \mathcal{\hat V}$ by contraposition since the converse is proven in Proposition \ref{prop:feasibleCover}. Let $\tilde v = (\tilde w, \tilde z) \notin \mathcal{\hat V}$ such that constraints \eqref{constr:compact_totalMaintenanceNumberMP},\eqref{constr:compact_sign_1} hold for $\tilde v$ but $\mathbb{P}(\hat \zeta_{\mathcal{G}}(\tilde w) \le \rho_{\mathcal{G}}, \ \hat \zeta_{\mathcal{L}}(\tilde z) \le \rho_{\mathcal{L}}) < 1-\alpha$. Then, there must exist at least one subset $\tilde C \subseteq N$ such that $\tilde C$ is a cover, which directly implies by definition that $\tilde v \notin \mathcal{\hat V}_1.$
\Halmos
\end{proof}

Unfortunately, set \eqref{eq:coverIneqSet} requires finding an exponential number of valid inequalities to obtain the feasible space for $\mathcal{\hat V}$. In the remainder of this section, we address a separation subroutine using the probability oracle $\mathcal{P}$ to check the feasibility status of the current maintenance solution and further find valid cover inequalities, if such equalities exist (see, for instance, \citet{Kucukyavuz2019}). We first consider the relaxed master problem of \eqref{eq:masterProblem}:
\begin{align}  \label{eq:masterProblemCover}
    \min \{ \sum_{k \in \mathcal{K}} \pi^k (c_k^{\top}v + \theta^k) : \ \theta \ge L, \ (v,\theta) \in \Theta, \ v \in \mathcal{\hat V} \cap \mathcal{C} \},
\end{align}
where $\mathcal{C}$ is the set of cover inequalities generated and added to the relaxed master problem until some iteration. Algorithm \ref{alg:decomposition} starts with a possibly  empty  subset of $\mathcal{ C}$. After obtaining a maintenance decision $\hat v = (\hat w,\hat z)$ at the end of step \ref{alg:decompExactRoutine}, we initialize the separation subroutine (Algorithm \ref{alg:representChance}) which employs the probability oracle $\mathcal{P}(\hat v)$ to compute the exact value of the left hand-side of \eqref{constr:compact_jointChance} when  $\hat v$ is a feasible maintenance decision. Algorithm \ref{alg:decomposition} leaves the separation subroutine without generating a valid inequality if the current solution is feasible. Otherwise, we define a cover as $C= \bigcup_{h \in \mathcal{H'}} \{(h,t) \in N: \hat v_{ht} = 1, \ t \in \mathcal{\bar T}\}$ and generate a cover inequality as in \eqref{cut:coverIneq} to separate the current solution from the set of maintenance decisions and add the corresponding inequality to set $\mathcal{C}$. 

Next, we explain the monotonicity property of the probability oracle in our paper. Given any maintenance decision $v$, let us define the index set of components and maintenance periods as follows: \[\mathcal{I}(v) = \bigcup_{h \in \mathcal{H'}} \{ (h,t) : v_{ht} = 1, \ t \in \mathcal{\bar T} \}.\]
We say that $\mathcal{P}$ is monotonically non-increasing if any $v', v''$ pair has the following property:  
\[(h, t') \le (h, t'') \text{ for } (h,t') \in \mathcal{I}(v'), \ (h,t'') \in \mathcal{I}(v'') \text{ and } h \in \mathcal{H'}.\]
This implies that $ \mathcal{P}(v') \ge \mathcal{P}(v'')$, i.e., $\mathbb{P}(\hat \zeta_{\mathcal{G}}(w') \le \rho_{\mathcal{G}}) \mathbb{P}(\hat \zeta_{\mathcal{L}}(z') \le \rho_{\mathcal{L}}) \ge \mathbb{P}(\hat \zeta_{\mathcal{G}}(w'') \le \rho_{\mathcal{G}}) \mathbb{P}(\hat \zeta_{\mathcal{L}}(z'') \le \rho_{\mathcal{L}})$. We state this property in Proposition \ref{prop:monotonicity}.

\begin{proposition} \label{prop:monotonicity}
Probability oracle $\mathcal{P}(v)$ is a monotonically non-increasing function in $v$. 
\end{proposition}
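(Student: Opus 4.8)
The plan is to exploit the factored form of the oracle. By Remark \ref{remark:poissonComp} the oracle value is
\[
\mathcal{P}(v) = \mathbb{P}\big(\hat \zeta_{\mathcal{G}}(w) \le \rho_{\mathcal{G}}\big)\, \mathbb{P}\big(\hat \zeta_{\mathcal{L}}(z) \le \rho_{\mathcal{L}}\big),
\]
a product of two cumulative distribution functions of Poisson Binomial random variables, each evaluated at a fixed threshold. I would show that each factor is individually non-increasing when the maintenance periods are shifted later componentwise, and then combine the two monotone factors by a product argument.

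First I would translate the hypothesis $(h,t') \le (h,t'')$ into an ordering of the Bernoulli success probabilities. For a component $h$ scheduled at period $t$, the effective maintenance time entering the success probability is $m_h = \min\{t, |\mathcal{T}|\}$, which is non-decreasing in $t$; since the failure-time cumulative distribution $t \mapsto \mathbb{P}(\xi_h \le t)$ is non-decreasing, the success probability $p_h(v) = \mathbb{P}(\xi_h \le m_h(v))$ of the Bernoulli $R_h$ satisfies $p_h(v') \le p_h(v'')$ for every $h \in \mathcal{H}'$. For components in $\mathcal{H}''$ the scheduled period is fixed, so their success probabilities are identical under $v'$ and $v''$. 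Hence the two vectors of success probabilities defining $\hat \zeta_{\mathcal{G}}(w')$ and $\hat \zeta_{\mathcal{G}}(w'')$ (and likewise those for the lines) are ordered componentwise.

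The key step is a stochastic-dominance argument, which I would establish by coupling. For each component $h$, draw a single uniform variate $U_h \sim \mathrm{Unif}[0,1]$ and define the indicators $\mathbbm{1}[U_h \le p_h(v')]$ and $\mathbbm{1}[U_h \le p_h(v'')]$; because $p_h(v') \le p_h(v'')$, the former is almost surely no larger than the latter. Taking the $U_h$ independent across $h$ reproduces, using the independence invoked in Remark \ref{remark:poissonComp}, the correct Poisson Binomial laws for $\hat \zeta_{\mathcal{G}}(w')$ and $\hat \zeta_{\mathcal{G}}(w'')$ on a common probability space with $\hat \zeta_{\mathcal{G}}(w') \le \hat \zeta_{\mathcal{G}}(w'')$ almost surely. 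Consequently $\mathbb{P}(\hat \zeta_{\mathcal{G}}(w') \le \rho_{\mathcal{G}}) \ge \mathbb{P}(\hat \zeta_{\mathcal{G}}(w'') \le \rho_{\mathcal{G}})$, and the identical argument over $\mathcal{L}$ yields $\mathbb{P}(\hat \zeta_{\mathcal{L}}(z') \le \rho_{\mathcal{L}}) \ge \mathbb{P}(\hat \zeta_{\mathcal{L}}(z'') \le \rho_{\mathcal{L}})$.

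Finally I would combine the two factors: both are probabilities, hence nonnegative, and each dominates its counterpart under $v''$, so their products satisfy $\mathcal{P}(v') \ge \mathcal{P}(v'')$, which is the claim. I expect the only delicate point to be the coupling that upgrades the componentwise ordering of success probabilities to an almost-sure ordering of the two Poisson Binomial sums; once the shared-uniform construction is in place this is immediate, and the remainder reduces to monotonicity of a cumulative distribution and of a product of nonnegative factors.
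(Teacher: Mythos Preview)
Your argument is correct and follows the same overall structure as the paper: factor the oracle into two Poisson Binomial CDFs, translate the componentwise period ordering into a componentwise ordering of the Bernoulli success probabilities, show that each CDF is non-increasing in those probabilities, and multiply. The difference lies in how the key monotonicity of the Poisson Binomial CDF in its success probabilities is established.

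The paper proves this via an explicit calculus computation (its Lemma \ref{lemma:monotonicity}): it writes the probability mass function as a sum over subsets, splits according to whether the distinguished index $n$ lies in the subset, differentiates with respect to $p_n$, and obtains a telescoping identity showing $\partial F/\partial p_n = -H^y \le 0$. Your route is a probabilistic coupling: realize both Poisson Binomials on the same space via shared uniforms $U_h$, observe that $\mathbbm{1}[U_h \le p_h(v')] \le \mathbbm{1}[U_h \le p_h(v'')]$ almost surely, and sum. Your argument is shorter, avoids any combinatorial bookkeeping, and immediately yields first-order stochastic dominance (hence monotonicity for \emph{any} non-decreasing functional of the Bernoulli vector, not just the threshold event $\{\sum \le \rho\}$). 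The paper's derivative computation, on the other hand, produces an explicit sensitivity formula $\partial F/\partial p_n = -H^y$, which is more information than needed here but could be useful if one wanted quantitative bounds. Both are valid; yours is the more elementary and more general of the two.
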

The proof of Proposition \ref{prop:monotonicity} is given in Appendix \ref{app:monotonicity} which leverages the fact that the random variables have Poisson Binomial distribution as discussed in Remark \ref{remark:poissonComp}. We use Proposition \ref{prop:monotonicity} to strengthen the formulation in \eqref{cut:coverIneq}. 
Without loss of generality, we assume a pair of maintenance decisions $v' \neq v''$ with the following property:
\begin{align*}
    \text{there exists a unique } h_* \text{ and } t'_* < t''_* : \ &(h_*, t'_* ) \in \mathcal{I}(v'), \ (h_*, t''_* ) \in \mathcal{I}(v''), \ (h', t') = (h'', t'') \\ &\text{ for } (h', t') \in \mathcal{I}(v') \setminus \{(h_*, t'_*)\}, \ (h'', t'') \in \mathcal{I}(v'') \setminus \{(h_*, t''_*)\}.
\end{align*}
In other words, component $h_*$ is scheduled for maintenance in period $t'_*$ under decision  $v'$, and it is scheduled for maintenance in a later period $t''_*$ than $t'_*$ under decision $v''$. For each component $h \in \mathcal{H'} \setminus \{h_*\}$, maintenance schedules are the same under both decisions. By Proposition \ref{prop:monotonicity}, we have $\mathcal{P}(v') > \mathcal{P}(v'')$. We observe that if $v'$ is infeasible w.r.t. constraint \eqref{constr:compact_jointChance}, i.e., $\mathcal{P}(v') < 1-\alpha$, then clearly $v''$ is also infeasible w.r.t. constraint \eqref{constr:compact_jointChance}. In particular, we observe that any other maintenance plan for component $h_*$ in a later period than $t'_*$ will lead to infeasibility. We can extend this observation when multiple components have different maintenance schedules under decisions $v'$ and $v''$. Then, we can strengthen \eqref{cut:coverIneq} as follows: when an infeasible maintenance decision $v$ is obtained within Algorithm \ref{alg:decomposition} at some iteration, we can generate a cover $C$ as explained previously. Bearing in mind our observation, we define a set $E(C)$ depending on $C$ as follows:
\begin{align}
    E(C)= \bigcup_{h \in \mathcal{H'}} \{(h, t): t = t(h), t(h)+1, \dots, |\mathcal{\bar T}| \text{ where } (h,t(h)) \in C \} \label{eq:extendedCover}.
\end{align}
We call a set $E(C)$ defined as in \eqref{eq:extendedCover} an {\it extended cover} for $\mathcal{\hat V}$ if $C$ is a cover. By using this set, we can obtain stronger cover inequalities than \eqref{cut:coverIneq}. We state our claim in Proposition \ref{prop:ExtCover}.
\begin{proposition} \label{prop:ExtCover}
Given a cover $C \subseteq N$, the following set of extended cover inequalities is valid and stronger than the set of cover inequalities given by \eqref{cut:coverIneq} whenever $E(C) \setminus C \neq \emptyset$:
\begin{align}
   \sum_{(h,t) \in E(C)}   v_{ht} \le |\mathcal{H}'| - 1. \label{eq:extendedCoverIneq}
\end{align}
\end{proposition}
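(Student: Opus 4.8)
The plan is to prove validity first and then read off the dominance over the ordinary cover inequality \eqref{cut:coverIneq}. For validity I would argue by contradiction. Take any $\hat v \in \mathcal{\hat V}$ and suppose it violates \eqref{eq:extendedCoverIneq}, i.e., $\sum_{(h,t) \in E(C)} \hat v_{ht} \ge |\mathcal{H}'|$. The key structural observation is that, by the definition \eqref{eq:extendedCover} of the extended cover, the left-hand side decomposes componentwise as $\sum_{h \in \mathcal{H}'} \sum_{t = t(h)}^{|\mathcal{\bar T}|} \hat v_{ht}$, where $t(h)$ is the unique period with $(h, t(h)) \in C$. Since $\hat v \ge 0$ and the assignment constraint \eqref{constr:compact_totalMaintenanceNumberMP} forces $\sum_{t \in \mathcal{\bar T}} \hat v_{ht} = 1$ for every $h$, each inner sum is at most $1$, so the total is bounded above by $|\mathcal{H}'|$. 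Hence the assumed violation can hold only with equality, which in turn forces every inner sum to equal $1$: for each component $h$, the unique period $t_h$ at which $\hat v$ schedules maintenance satisfies $t_h \ge t(h)$.

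Next I would invoke the monotonicity of the probability oracle. Let $v(C)$ be the maintenance decision that schedules each component $h$ exactly at $t(h)$; by the definition of a cover we have $\mathcal{P}(v(C)) < 1 - \alpha$, i.e., $v(C) \notin \mathcal{\hat V}$. The decision $\hat v$ schedules each component at $t_h \ge t(h)$, so componentwise $\hat v$ postpones every maintenance relative to $v(C)$, which is precisely the hypothesis of Proposition \ref{prop:monotonicity}. That proposition then yields $\mathcal{P}(\hat v) \le \mathcal{P}(v(C)) < 1 - \alpha$, so $\hat v \notin \mathcal{\hat V}$, contradicting our choice of $\hat v$. This establishes that \eqref{eq:extendedCoverIneq} is valid for $\mathcal{\hat V}$.

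Finally, for the comparison with \eqref{cut:coverIneq}, I would note that $C \subseteq E(C)$ and all variables are nonnegative, so $\sum_{(h,t) \in E(C)} v_{ht} \ge \sum_{(h,t) \in C} v_{ht}$ at every point; since both inequalities share the right-hand side $|\mathcal{H}'| - 1$, satisfying the extended inequality implies satisfying the ordinary one, i.e., the extended cut dominates. When $E(C) \setminus C \neq \emptyset$ the dominance is strict: picking $(h_0, t_0) \in E(C) \setminus C$ (so $t_0 > t(h_0)$), the point that schedules every component $h \neq h_0$ at $t(h)$ and schedules $h_0$ at $t_0$ instead of $t(h_0)$ satisfies the assignment constraint, meets \eqref{cut:coverIneq} with equality, yet attains the value $|\mathcal{H}'|$ on the left-hand side of \eqref{eq:extendedCoverIneq}, and is therefore cut off only by the extended inequality.

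I expect the only delicate point to be the application of Proposition \ref{prop:monotonicity}: its hypothesis requires that $\hat v$ postpone the maintenance of \emph{every} component relative to $v(C)$, which is exactly what the equality-forcing step in the first paragraph delivers. Should one prefer to use a single-component version of monotonicity, the conclusion $\mathcal{P}(v(C)) \ge \mathcal{P}(\hat v)$ still follows by postponing one component at a time along a chain from $v(C)$ to $\hat v$ and chaining the resulting inequalities by transitivity. Everything else reduces to bookkeeping with the assignment constraint \eqref{constr:compact_totalMaintenanceNumberMP} and the nonnegativity of $v$.
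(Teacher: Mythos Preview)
Your proof is correct and follows essentially the same approach as the paper: validity is obtained by combining the argument of Proposition~\ref{prop:feasibleCover} with the monotonicity of the oracle from Proposition~\ref{prop:monotonicity}, and dominance follows from $C \subseteq E(C)$ together with nonnegativity of $v$. The paper's own proof is quite terse (it simply cites those two propositions and the containment $C \subsetneq E(C)$), whereas you spell out the equality-forcing step and additionally construct an explicit witness for strict dominance, which the paper omits.
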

\begin{proof}[Proof.]
By using Proposition \ref{prop:feasibleCover} and Proposition \ref{prop:monotonicity}, we have the proof of validity. To prove the strength of \eqref{eq:extendedCoverIneq},
suppose $E(C) \setminus C \neq \emptyset$ holds, then we have $\sum_{(h,t) \in E(C)}   v_{ht} \ge \sum_{(h,t) \in C}  v_{ht}$ since $C \subsetneq E(C).$ This completes the proof.
\Halmos
\end{proof}
Finally, we present our separation subroutine (Algorithm \ref{alg:representChance}) within Algorithm \ref{alg:decomposition}.
%
%
% ALGORITHM REPRESENTCHANCE
%
%
%
\begin{algorithm}[H]
\caption{\texttt{RepresentChance}}
\label{alg:representChance}
\begin{algorithmic}[1]
\REQUIRE $v, \mathcal{P}: v \rightarrow \mathbb{R}$.
\ENSURE {\bf true} if $v$ is feasible w.r.t. \eqref{constr:compact_jointChance}, {\bf false} otherwise.
%\STATE Compute $\mathbb{P}(\hat \zeta_{\mathcal{G}}(w) \le \rho_{\mathcal{G}})$ and $ \mathbb{P}(\hat \zeta_{\mathcal{L}}(z) \le \rho_{\mathcal{L}})$.
\STATE Compute $\mathcal{P}(v)$.
\IF{$\mathcal{P}(v) \ge 1-\alpha$}
\RETURN {\bf true}
\ELSE
\STATE Generate and add the cover inequality of form \eqref{eq:extendedCoverIneq} to set $\mathcal{C}$.
\RETURN {\bf false}
\ENDIF
\end{algorithmic}
\end{algorithm}

\subsubsection{Deterministic Safe Approximation} \label{ss:deterministicSafeApproximation}
As an alternative representation of the joint chance-constraint \eqref{constr:compact_jointChance}, we propose an SOCP-based safe approximation. The proposed safe approximation is an extension of the deterministic safe approximation of a single chance-constraint by \citet{Basciftci2018} by introducing two additional continuous variables and reformulating $\mathcal{\hat V}$ as a second-order conic set by lifting it to a higher-dimensional space.
\begin{proposition}
The following system of equations provides a safe approximation of the joint chance-constrained set $\mathcal{\hat V}$, i.e., any maintenance decision $v$ satisfying \eqref{SOCP_Chance} and  \eqref{constr:compact_totalMaintenanceNumberMP} belongs to set $\mathcal{\hat V}$:
\begin{subequations} \label{SOCP_Chance}
\begin{align} 
&\sum_{i \in \mathcal{G}} \sum_{t \in \mathcal{\bar T}} \mathbb{E}[\zeta_{it}] w_{it} \leq \rho_{\mathcal{G}}(1-\bar \alpha_{\mathcal{G}})  \label{eq:safeLinearGen}\\
&\sum_{(i,j) \in \mathcal{L}} \sum_{t \in \mathcal{\bar T}} \mathbb{E}[\zeta_{ijt}]z_{ijt} \leq \rho_{\mathcal{L}}(1- \bar \alpha_{\mathcal{L}}) \label{eq:safeLinearLine}\\
&\bar \alpha_{\mathcal{G}} \bar \alpha_{\mathcal{L}} \ge 1 - \alpha \label{eq:safeSOCP}\\ &\bar \alpha_{\mathcal{G}}, \bar \alpha_{\mathcal{L}} \in [0,1] \label{eq:safeIntervals}
\end{align}
\end{subequations}
\end{proposition}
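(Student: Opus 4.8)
The plan is to reduce the statement to the product form \eqref{eq:chanceBernoulli} of the joint chance-constraint and then to control each factor separately through its mean. Assuming \eqref{constr:compact_totalMaintenanceNumberMP} throughout, I would first record that the left-hand sides of \eqref{eq:safeLinearGen} and \eqref{eq:safeLinearLine} are nothing but the means of the two Poisson Binomial random variables from Remark \ref{remark:poissonComp}. Indeed, since $R_i(w) = \sum_{t \in \mathcal{\bar T}} \zeta_{it} w_{it}$ and $R_{ij}(z) = \sum_{t \in \mathcal{\bar T}} \zeta_{ijt} z_{ijt}$ with deterministic $w,z$, linearity of expectation gives $\sum_{i \in \mathcal{G}} \sum_{t} \mathbb{E}[\zeta_{it}] w_{it} = \mathbb{E}[\hat\zeta_{\mathcal{G}}(w)]$ and likewise $\sum_{(i,j) \in \mathcal{L}} \sum_{t} \mathbb{E}[\zeta_{ijt}] z_{ijt} = \mathbb{E}[\hat\zeta_{\mathcal{L}}(z)]$. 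Hence \eqref{eq:safeLinearGen} reads $\mathbb{E}[\hat\zeta_{\mathcal{G}}(w)] \le \rho_{\mathcal{G}}(1-\bar\alpha_{\mathcal{G}})$ and \eqref{eq:safeLinearLine} reads $\mathbb{E}[\hat\zeta_{\mathcal{L}}(z)] \le \rho_{\mathcal{L}}(1-\bar\alpha_{\mathcal{L}})$.

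Next I would convert each mean bound into a lower bound on the corresponding marginal probability via Markov's inequality. Since $\hat\zeta_{\mathcal{G}}(w)$ is a nonnegative (integer-valued) random variable and $\rho_{\mathcal{G}}>0$, one has
\[
\mathbb{P}(\hat\zeta_{\mathcal{G}}(w) > \rho_{\mathcal{G}}) \le \mathbb{P}(\hat\zeta_{\mathcal{G}}(w) \ge \rho_{\mathcal{G}}) \le \frac{\mathbb{E}[\hat\zeta_{\mathcal{G}}(w)]}{\rho_{\mathcal{G}}} \le 1 - \bar\alpha_{\mathcal{G}},
\]
so that $\mathbb{P}(\hat\zeta_{\mathcal{G}}(w) \le \rho_{\mathcal{G}}) \ge \bar\alpha_{\mathcal{G}}$; here the requirement $\bar\alpha_{\mathcal{G}} \in [0,1]$ from \eqref{eq:safeIntervals} ensures this is a genuine probability bound. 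The identical argument applied to $\hat\zeta_{\mathcal{L}}(z)$ yields $\mathbb{P}(\hat\zeta_{\mathcal{L}}(z) \le \rho_{\mathcal{L}}) \ge \bar\alpha_{\mathcal{L}}$.

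Finally I would multiply the two marginal bounds. Using the independence of $\hat\zeta_{\mathcal{G}}(w)$ and $\hat\zeta_{\mathcal{L}}(z)$ noted in Remark \ref{remark:poissonComp}, together with \eqref{eq:safeSOCP}, I obtain $\mathbb{P}(\hat\zeta_{\mathcal{G}}(w) \le \rho_{\mathcal{G}})\,\mathbb{P}(\hat\zeta_{\mathcal{L}}(z) \le \rho_{\mathcal{L}}) \ge \bar\alpha_{\mathcal{G}}\bar\alpha_{\mathcal{L}} \ge 1-\alpha$, which is exactly \eqref{eq:chanceBernoulli}; hence $v \in \mathcal{\hat V}$ by the description \eqref{eq:approxPoissonBinomial}. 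To justify the SOCP claim, I would then remark that \eqref{eq:safeSOCP}, being $\bar\alpha_{\mathcal{G}}\bar\alpha_{\mathcal{L}} \ge (\sqrt{1-\alpha})^2$ with $\bar\alpha_{\mathcal{G}},\bar\alpha_{\mathcal{L}} \ge 0$, is a hyperbolic (rotated second-order cone) constraint, so that the full system \eqref{SOCP_Chance} together with the linear constraint \eqref{constr:compact_totalMaintenanceNumberMP} is second-order cone representable.

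The main conceptual step — and the only one that is not mechanical — is the introduction of the auxiliary variables $\bar\alpha_{\mathcal{G}}$ and $\bar\alpha_{\mathcal{L}}$ that decouple the product of the two marginal cumulative probabilities: one must verify that bounding each marginal below by $\bar\alpha_{\mathcal{G}},\bar\alpha_{\mathcal{L}}$ and imposing $\bar\alpha_{\mathcal{G}}\bar\alpha_{\mathcal{L}} \ge 1-\alpha$ is sufficient (losing nothing beyond the conservatism already inherent in Markov's inequality), and that the resulting bilinear coupling is \emph{exactly} a second-order cone constraint rather than merely an approximation of one. A secondary point to handle carefully is the direction and strictness of the Markov chain above, which relies on $\rho_{\mathcal{G}},\rho_{\mathcal{L}}>0$ so that the events $\{\cdot>\rho\}$ and $\{\cdot\ge\rho\}$ can be nested and bounded as written; integrality of the thresholds is convenient but the non-strict chain is valid regardless.
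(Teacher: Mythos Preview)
Your proposal is correct and follows essentially the same route as the paper: rewrite the joint chance-constraint in product form via independence, bound each marginal tail by Markov's inequality using \eqref{eq:safeLinearGen}--\eqref{eq:safeLinearLine}, and then multiply and invoke \eqref{eq:safeSOCP}. The paper compresses the Markov step by citing \citet{Basciftci2018}, whereas you spell it out and also add the explicit rotated-cone justification for \eqref{eq:safeSOCP}; these are expository additions rather than a different argument.
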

\begin{proof}[Proof.]
Consider $\mathcal{\hat V}_S = \{(w,z) \in \mathcal{G'} \times \mathcal{L'}: \eqref{SOCP_Chance}, \eqref{constr:compact_totalMaintenanceNumberMP}\}$. Note that by the independence assumption, set $\mathcal{\hat V}$ is equivalent to:
\begin{align*}
    \mathcal{\hat V} = \{(w,z) \in \{0,1\}^{|\mathcal{G'}| \times |\mathcal{L'}|}: \mathbb{P} ( \sum_{i \in \mathcal{G}} \sum_{t \in \mathcal{\bar T}} \zeta_{it} w_{it} \le \rho_{\mathcal{G}}) \mathbb{P}(\sum_{i \in \mathcal{L}} \sum_{t \in \mathcal{\bar T}} \zeta_{ijt} z_{ijt} \le \rho_{\mathcal{L}}) \ge 1-\alpha, \ \eqref{constr:compact_totalMaintenanceNumberMP}\}
\end{align*}
To show that $\mathcal{\hat V}_S \subseteq \mathcal{\hat V}$, we let $(\tilde w, \tilde z) \in \mathcal{\hat V}_S.$ As proven in \citet{Basciftci2018}, we have $\mathbb{P} ( \sum_{i \in \mathcal{G}} \sum_{t \in \mathcal{\bar T}} \zeta_{it} \tilde w_{it} \ge \rho_{\mathcal{G}}) \le \frac{\sum_{i \in \mathcal{G}} \sum_{t \in \mathcal{\bar T}} \mathbb{E}[\zeta_{it}] \tilde w_{it}}{\rho_{\mathcal{G}}}$. By using \eqref{eq:safeLinearGen}, we obtain $\mathbb{P} ( \sum_{i \in \mathcal{G}} \sum_{t \in \mathcal{\bar T}} \zeta_{it} \tilde w_{it} \le \rho_{\mathcal{G}}) \ge \bar \alpha_{\mathcal{G}}$. Similarly, we also have $\mathbb{P} ( \sum_{i \in \mathcal{L}} \sum_{t \in \mathcal{\bar T}} \zeta_{ijt} \tilde z_{ijt} \le \rho_{\mathcal{L}}) \ge \bar \alpha_{\mathcal{L}}$. By combining these results with relations \eqref{eq:safeSOCP} and \eqref{eq:safeIntervals}, we have $\mathbb{P} ( \sum_{i \in \mathcal{G}} \sum_{t \in \mathcal{\bar T}} \zeta_{it} \tilde w_{it} \le \rho_{\mathcal{G}}) \mathbb{P} ( \sum_{i \in \mathcal{L}} \sum_{t \in \mathcal{\bar T}} \zeta_{ijt} \tilde z_{ijt} \le \rho_{\mathcal{L}}) \ge 1- \alpha$ proving that $(\tilde w, \tilde z) \in \mathcal{\hat V}$.
\Halmos
\end{proof}
In this formulation, the variables $\bar \alpha_{\mathcal{G}}$ and  $\bar \alpha_{\mathcal{L}}$ are used to represent the probabilities in the joint chance-constraint \eqref{eq:chanceBernoulli}. Inequalities \eqref{eq:safeLinearGen} and \eqref{eq:safeLinearLine} are affine and expectations are efficiently computable (since the random vector $\zeta$ consists of Bernoulli random variables), also \eqref{eq:safeSOCP} is an SOCP constraint so that the proposed deterministic safe approximation of \eqref{chanceConstr} is convex and tractable. Note that this approximation may be too conservative in some cases entailing an early maintenance planning when $\alpha$ gets smaller.

\subsection{Optimality Cut Families} \label{s:optimalityCutFamilies}
In this section, we introduce various sets of optimality cuts which are generated by \texttt{OptimalityCut($v, \xi, L$)} in Algorithm \ref{alg:decomposition}. In Section \ref{ss:integerL-ShapedOptimalityCuts}, we specify the well-known classical integer L-shaped optimality cuts introduced by \citet{Laporte1993}. We introduce new optimality cuts by strengthening the classical integer L-shaped optimality cuts in Section \ref{ss:newOptimalityCuts}. We explain the rationale behind these optimality cuts in detail and provide the proofs of their validity and strength. 

\subsubsection{Integer L-Shaped Optimality Cuts} \label{ss:integerL-ShapedOptimalityCuts}
%Recall that we assume that the two-stage joint chance-constrained stochastic program \eqref{eq:twoStageCompact} has relatively complete recourse, thus, we only focus on generating optimality cuts. 
The idea of the integer L-shaped method is to approximate the expected recourse function by adding optimality cuts as the supporting hyperplanes of $\mathcal{Q}(v, \xi)$. These cuts depend on a given maintenance decision $v^{(r)} \in \mathcal{\hat V}$ at some iteration $r$ and a realization of the random vector $\xi$, and gradually reduce the feasible region defined in the $(v, \theta)$-space. We first introduce the index set at iteration $r$ as $V_h(v^{(r)}) := \{t \in \mathcal{\bar T}: v_{ht}^{(r)} = 1\}$ for every $h \in \mathcal{H}'$
%For every $v \in \mathcal{\hat V}$, the maintenance decision $v_h$ for component $h$ is a binary vector which implies that $V_h^{(r)}$ is a singleton. 
and define $\mathcal{Q}(v^{(r)}, \xi)$ as the expected second-stage value. Note that the set $V_h(v^{(r)})$ is a singleton by constraints \eqref{limitOnGenMaintenance} and \eqref{limitOnLineMaintenance}. The structure of the stochastic program with independent scenarios allows us to define multi-cuts. Given a feasible maintenance decision $v^{(r)} \in \mathcal{\hat V}$, the classical integer L-shaped optimality cut for subproblem $k$ added to the master problem at the iteration $r$ is defined as:
%
% CLASSICAL L-SHAPED ONE-CUT
%
\begin{equation} \label{eq:classical_Lshaped}
    \theta^k \ge (\mathcal{Q}(v^{(r)},\xi_k) - L^k) 
    \sum_{h \in \mathcal{H}'} \big(\sum_{t \in V_h(v^{(r)})} ( v_{ht}-1) - \sum_{t \notin V_h(v^{(r)})} v_{ht} \big) + \mathcal{Q}(v^{(r)}, \xi_k),
\end{equation}
where $L^k$ is a valid lower bound on the expected second-stage value of subproblem $k$.
%The set of optimality cuts given by \eqref{eq:classical_Lshaped} is valid and moreover, since $\mathcal{\hat V}$ is finite with the assumption that stochastic optimization problem is feasible and bounded below, the integer L-shaped method terminates with an optimal solution in a finite number of steps \citet{Laporte1993}. 
We can also obtain the single-cut version of the optimality cut by summing over all scenarios on both sides of the relation \eqref{eq:classical_Lshaped}:
\begin{equation} \label{eq:classical_Lshaped_single}
    \sum_{k \in \mathcal{K}} \theta^k \ge \sum_{k \in \mathcal{K}} (\mathcal{Q}(v^{(r)},\xi_k) - L^k) 
    \sum_{h \in \mathcal{H}'} \big(\sum_{t \in V_h(v^{(r)})} ( v_{ht}-1) - \sum_{t \notin V_h(v^{(r)})} v_{ht} \big) + \sum_{k \in \mathcal{K}} \mathcal{Q}(v^{(r)}, \xi_k).
\end{equation}
For ease of notation, we will not carry the superscript $(r)$ in the remainder of this section.
\subsubsection{New Optimality Cuts} \label{ss:newOptimalityCuts}
Next, we introduce a new set of optimality cuts by adapting the multi-cut version of the integer L-shaped optimality cut  \eqref{eq:classical_Lshaped} to better approximate the expected recourse function $\mathcal{Q}$.
\begin{proposition} \label{prop:optCut_K+}
Given a feasible maintenance decision $v^* \in \mathcal{\hat V}$, the following set of optimality cuts is valid and stronger than the classical L-shaped optimality cut \eqref{eq:classical_Lshaped}:
%
% STRONG K
%
\begin{equation} \label{eq:optCut_K+}
\setlength{\jot}{8pt}
\theta^k \ge (\mathcal{Q}(v^*,\xi_k) - L^k) 
\sum_{h \in \mathcal{H}'} \big(\sum_{t \in V_h(v^*)} v_{ht}-1\big) + \mathcal{Q}(v^*, \xi_k).
\end{equation}
\end{proposition}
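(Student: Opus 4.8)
The plan is to prove the two assertions of Proposition~\ref{prop:optCut_K+} separately: first that \eqref{eq:optCut_K+} is a valid lower bound on $\theta^k$ at every feasible $v\in\mathcal{\hat V}$, and then that its right-hand side pointwise dominates that of the classical cut \eqref{eq:classical_Lshaped}. The structural fact I would exploit throughout is that, by constraints \eqref{limitOnGenMaintenance} and \eqref{limitOnLineMaintenance}, each index set $V_h(v^*)$ is a singleton, say $V_h(v^*)=\{t_h^*\}$. Hence $\sum_{t\in V_h(v^*)}v_{ht}=v_{h t_h^*}\in\{0,1\}$, and $\sum_{h\in\mathcal{H}'}\big(\sum_{t\in V_h(v^*)}v_{ht}-1\big)=-m$, where $m\ge 0$ counts the components whose maintenance period under $v$ differs from the one prescribed by $v^*$.

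For validity, I would argue by cases on $m$, taking $\theta^k=\mathcal{Q}(v,\xi_k)$ as the point to be certified feasible. When $v=v^*$ we have $m=0$, the summation term vanishes, and the inequality reduces to $\mathcal{Q}(v^*,\xi_k)\ge\mathcal{Q}(v^*,\xi_k)$, which holds with equality. When $v\neq v^*$ we have $m\ge 1$; using that $\mathcal{Q}(v^*,\xi_k)-L^k\ge 0$ (because $L^k$ is a valid lower bound on the recourse), the right-hand side $-(\mathcal{Q}(v^*,\xi_k)-L^k)\,m+\mathcal{Q}(v^*,\xi_k)$ is nonincreasing in $m$ and is therefore at most its value at $m=1$, namely $L^k$. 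Since $\mathcal{Q}(v,\xi_k)\ge L^k$ for every feasible $v$ by the definition of $L^k$ in \eqref{lowerBoundLP}, the cut is satisfied.

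For strength, I would compare the two right-hand sides term by term. Per component $h$, the bracketed expression in \eqref{eq:optCut_K+} exceeds that in \eqref{eq:classical_Lshaped} by exactly $\sum_{t\notin V_h(v^*)}v_{ht}\ge 0$. Multiplying by the nonnegative factor $\mathcal{Q}(v^*,\xi_k)-L^k$ and summing over $h\in\mathcal{H}'$ shows that the right-hand side of \eqref{eq:optCut_K+} equals that of \eqref{eq:classical_Lshaped} plus the nonnegative quantity $(\mathcal{Q}(v^*,\xi_k)-L^k)\sum_{h\in\mathcal{H}'}\sum_{t\notin V_h(v^*)}v_{ht}$. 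Thus the new cut imposes an at least as tight a bound on $\theta^k$ at every feasible point, strictly so whenever some $v_{ht}>0$ with $t\notin V_h(v^*)$ while $\mathcal{Q}(v^*,\xi_k)>L^k$.

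The step I expect to be the main obstacle is the validity case $m\ge 1$: I must be careful that the $L^k$ invoked is the bound that holds uniformly over \emph{all} feasible first-stage decisions, as \eqref{lowerBoundLP} does by minimizing jointly over $v$, rather than merely at $v^*$. If one only had $L^k\le\mathcal{Q}(v^*,\xi_k)$, the inequality $\theta^k=\mathcal{Q}(v,\xi_k)\ge L^k$ could fail for some $v$, and dropping the $-\sum_{t\notin V_h(v^*)}v_{ht}$ penalties present in \eqref{eq:classical_Lshaped} would then cut off feasible points. Once this uniform bound is secured, the remainder is elementary sign-tracking.
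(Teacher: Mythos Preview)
Your proposal is correct and follows essentially the same approach as the paper's proof: both argue validity by the dichotomy $v=v^*$ (where the cut is tight) versus $v\neq v^*$ (where the right-hand side drops to at most $L^k$), and both establish strength by observing that the classical cut's extra term $-\sum_{t\notin V_h(v^*)}v_{ht}$ is nonpositive. Your treatment is slightly more explicit than the paper's about the uniform validity of $L^k$ over all feasible $v$, which the paper leaves implicit.
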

\begin{proof}[Proof.]
Suppose we are given a maintenance decision $v^* \in \mathcal{\hat V}$. We consider the following quantity $Q_h := \sum_{t \in V_h(v^*)} v_{ht}$. If $Q_h = 1$ for every $h \in \mathcal{H}'$, then the cut in \eqref{eq:optCut_K+} becomes $\theta^k \ge \mathcal{Q}(v^*, \xi_k).$  If $Q_h = 0$ for some $h \in \mathcal{H}'$, then we have $\sum_{h \in \mathcal{H}'} Q_h - |\mathcal{H}'| \le -1$.  In this case, the optimality cut \eqref{eq:optCut_K+} becomes redundant since $\theta^k \ge L^k + A$ where $A \le 0$. To prove the strength of the cut, we also consider the following quantity $\bar Q_h := \sum_{t \notin V_h(v^*)} v_{ht}$ such that $\bar Q_h \in \{0,1\}$ for every $h \in \mathcal{H}'$. Then, clearly we have $Q_h - \bar Q_h - |V_h(v^*)| \le Q_h - |V_h(v^*)|$ for every $h \in \mathcal{H}'$. This implies that \eqref{eq:optCut_K+} is stronger than \eqref{eq:classical_Lshaped}.
\Halmos
\end{proof}
We obtain the single-cut version of \eqref{eq:optCut_K+} by summing over all scenarios on both sides:
\begin{equation} \label{eq:eq:optCut_K+_single}
    \sum_{k \in \mathcal{K}} \theta^k \ge  \sum_{k \in \mathcal{K}} (\mathcal{Q}(v^*,\xi_k) - L^k) 
\sum_{h \in \mathcal{H}'} \big(\sum_{t \in V_h(v^*)} v_{ht}-1\big) +  \sum_{k \in \mathcal{K}} \mathcal{Q}(v^*, \xi_k).
\end{equation}

Next, we explain how to derive even stronger optimality cuts than \eqref{eq:optCut_K+}. The key idea of deriving such optimality cuts is to identify a set of maintenance decisions which will yield  the same operational cost. Given a maintenance decision $v^* \in \mathcal{\hat V}$, we introduce the set $ R^k(v^*)$ as the set of all feasible maintenance decisions which will have the same second-stage value under scenario $k$:
\[ R^k (v^*) = \{ v \in \mathcal{\hat V}: \ \mathcal{Q}(v, \xi_k) = \mathcal{Q}(v^*, \xi_k)\}.\]
Further, we can define $\mathcal{\hat T}^k_{h}(v^*)$ as the set of maintenance period indices of each component $h$ under scenario $k$ such that maintaining component $h$ in period $t$ will yield to the same operational cost for every $t \in \mathcal{\hat T}^k_{h}(v^*)$:
\[\mathcal{\hat T}^k_{h} (v^*) = \{ t\in\mathcal{\bar T} : \ \exists v \in R^k (v^*) \text{ such that } v_{ht} = 1 \}.\]
%We use set $V_h$ in generating the previous optimality cuts for every scenario $k \in \mathcal{K}$. In other words, set $V_h$ is independent from scenario subproblems. This results in adding almost identical cuts for every scenario $k$ which may not be very interesting, thus, we can define better optimality cuts which are unique for scenario $k \in \mathcal{K}$. The set $\mathcal{\hat T}^k_h$ helps us identifying those maintenance periods in which for every maintenance occurring for component $h$, we will obtain the same second-stage value under scenario $k$.

\begin{proposition} \label{prop:optCut_K++}
Given a feasible maintenance decision $v^* \in \mathcal{\hat V}$, the following set of optimality cuts is valid and stronger than the set of optimality cuts given by \eqref{eq:optCut_K+}: 
\begin{equation} \label{eq:optCut_K++}
\theta^k \ge (\mathcal{Q}(v^*,\xi_k) - L^k) 
\sum_{h \in \mathcal{H}'} \big ( \sum_{t \in \mathcal{\hat T}^k_{h}(v^*) } v_{ht}-1 \big) + \mathcal{Q}(v^*, \xi_k).
%\end{align}
\end{equation}
\end{proposition}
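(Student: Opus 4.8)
The plan is to reduce the entire argument to the notion of \emph{status} of the components introduced in Section~\ref{ss:decompositionAlgorithm}. The first observation I would establish is that the maintenance vector $v$ enters the recourse problem \eqref{eq:recourseFunction} only through the coupling constraints \eqref{constr:compact_coupling}, i.e.\ only through the availability of each component in each period; consequently $\mathcal{Q}(v,\xi_k)$ depends on $v$ solely through the status matrix $u^k(v)$. The second observation is that the status vector of a single component $h$, namely $(u^k_{ht}(v))_{t\in\mathcal{T}}$, is determined by $h$'s own scheduled maintenance period together with its failure time $\xi^k_h$ and the maintenance durations, and is therefore independent of how the remaining components are scheduled. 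Combining these two facts, I would characterize $\mathcal{\hat T}^k_h(v^*)$ as precisely the set of periods $t$ at which component $h$ attains the same status vector it has under $v^*$: if scheduling $h$ at $t$ while leaving all other components at $v^*$ reproduces $h$'s status vector, the full status matrix is unchanged and the resulting decision lies in $R^k(v^*)$, so $t\in\mathcal{\hat T}^k_h(v^*)$.

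With this characterization in hand, validity follows from a case split on $m$, the number of components of $v$ scheduled outside their respective sets $\mathcal{\hat T}^k_h(v^*)$. Since each component is maintained in exactly one period by \eqref{limitOnGenMaintenance}--\eqref{limitOnLineMaintenance}, we have $\sum_{h\in\mathcal{H}'}\big(\sum_{t\in\mathcal{\hat T}^k_h(v^*)}v_{ht}-1\big)=-m$, so the right-hand side of \eqref{eq:optCut_K++} equals $\mathcal{Q}(v^*,\xi_k)-m\,(\mathcal{Q}(v^*,\xi_k)-L^k)$. When $m=0$ every component carries its $v^*$-status, the whole status matrix coincides with that of $v^*$, hence $\mathcal{Q}(v,\xi_k)=\mathcal{Q}(v^*,\xi_k)$ and the cut holds with equality. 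When $m\ge 1$, using $\mathcal{Q}(v^*,\xi_k)\ge L^k$ the right-hand side is bounded above by $L^k$, and since $L^k$ is a valid lower bound on every scenario recourse we get $\mathcal{Q}(v,\xi_k)\ge L^k$, so the cut is again satisfied.

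For the strength claim I would note that $v^*\in R^k(v^*)$ trivially, whence $V_h(v^*)\subseteq\mathcal{\hat T}^k_h(v^*)$ for every $h$; because $v\ge 0$ this gives $\sum_{t\in\mathcal{\hat T}^k_h(v^*)}v_{ht}\ge\sum_{t\in V_h(v^*)}v_{ht}$, and since the coefficient $\mathcal{Q}(v^*,\xi_k)-L^k$ is nonnegative, the right-hand side of \eqref{eq:optCut_K++} dominates that of \eqref{eq:optCut_K+}, yielding a stronger cut. The main obstacle is the $m=0$ branch of validity, which rests entirely on the characterization of $\mathcal{\hat T}^k_h(v^*)$ as a ``same-status'' set: the formal definition only asserts the existence of \emph{some} $v\in R^k(v^*)$ with $v_{ht}=1$, so the delicate point is to argue that membership in $\mathcal{\hat T}^k_h(v^*)$ forces $h$'s status vector to equal its $v^*$-status. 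This is exactly what lets me compose many admissible single-component substitutions into one simultaneous substitution without perturbing the status matrix, and the per-component decoupling of status established in the first paragraph is the property that makes this composition legitimate.
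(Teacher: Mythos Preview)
Your skeleton matches the paper's proof exactly: both arguments split on whether every component is scheduled inside its set $\mathcal{\hat T}^k_h(v^*)$, note that the cut is dominated by $\theta^k\ge L^k$ as soon as one component falls outside, and obtain strength from the inclusion $V_h(v^*)\subseteq\mathcal{\hat T}^k_h(v^*)$ together with $\mathcal{Q}(v^*,\xi_k)\ge L^k$. The paper is in fact terser than you are---it writes only ``the proof of validity follows as in Proposition~\ref{prop:optCut_K+}'' and does not discuss the $m=0$ case separately.

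Where you go beyond the paper is in flagging the substantive issue in the $m=0$ branch: validity there needs $\mathcal{Q}(v,\xi_k)\ge\mathcal{Q}(v^*,\xi_k)$, yet by definition $t\in\mathcal{\hat T}^k_h(v^*)$ only guarantees, for each $h$ in isolation, the existence of \emph{some} $v\in R^k(v^*)$ with $v_{ht}=1$. Your proposed resolution is to identify $\mathcal{\hat T}^k_h(v^*)$ with the ``same-status'' set for $h$ and then compose via per-component decoupling. However, the direction you need---that $t\in\mathcal{\hat T}^k_h(v^*)$ forces $h$'s status vector at $t$ to equal its status under $v^*$---does not follow from your two observations. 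Per-component decoupling says $h$'s status is determined by $h$'s own schedule; it does \emph{not} say that equal recourse cost implies equal status matrices. Two distinct availability patterns can, in principle, yield the same operational cost, so the witness $v\in R^k(v^*)$ with $v_{ht}=1$ may carry a different status for $h$ than $v^*$ does, with compensating differences in the other components. Your composition step then fails: assembling periods $t(h)\in\mathcal{\hat T}^k_h(v^*)$ across all $h$ need not reproduce the status matrix of $v^*$, and hence need not land in $R^k(v^*)$. You have correctly located the delicate point; the argument you sketch does not close it, and the paper's brief proof does not address it either.
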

\begin{proof}[Proof.]
%Given a maintenance decision $v^* \in \mathcal{\hat V}$, we consider the quantity $Q_h \equiv \sum_{t \in \mathcal{\hat T}^k_{h}(v^*) } v_{ht}$ with $Q_h \in \{0,1\}$ for every $h \in \mathcal{H}'$. If $Q_h = 1$, then the cut in \eqref{eq:optCut_K++} becomes $\theta^k \ge \mathcal{Q}(v^*, \xi_k)$. If $Q_h = 0$ for some $h \in \mathcal{H}'$, then the cut in \eqref{eq:optCut_K+} is not generated for the maintenance decision $v$. Therefore, we have $\sum_{h \in \mathcal{H}'} Q_h - |\mathcal{H}'| \le -1$. In this case, the optimality cut \eqref{eq:optCut_K++} becomes redundant since $\theta^k \ge L^k + A$ where $A \le 0$. 
Given a maintenance decision $v^* \in \mathcal{\hat V}$, consider the following quantity $Q_h := \sum_{t \in \mathcal{\hat T}^k_{h}(v^*)} v_{ht}$. By constraint \eqref{constr:compact_totalMaintenanceNumberMP}, we know that $Q_h \in \{0,1\}$ for every $h \in \mathcal{H'}$, then the proof of validity follows as in Proposition \ref{prop:optCut_K+}.
To prove the strength of the cut, let the maintenance decision of component $h$ under $v^*$ be in maintenance period $t'$, which clearly implies that $t' \in \mathcal{\hat T}_h^k(v^*)$ for every $k \in \mathcal{K}$ and $V_h(v^*) = \{t'\}$. Then, we have the following relation $\sum_{t \in \mathcal{\hat T}^k_{h}(v^*)} v_{ht}\ge \sum_{t \in V_{h}(v^*)} v_{ht}$ since $V_h(v^*) \subseteq \mathcal{\hat T}_h^k(v^*)$ holds for every $k \in \mathcal{K}$. This implies that \eqref{eq:optCut_K++} is stronger than \eqref{eq:optCut_K+}.
\Halmos
\end{proof}
Given a maintenance decision $v^* \in \mathcal{\hat V}$, obtaining set $R^k(v^*)$ for every $k \in \mathcal{K}$ might be computationally expensive; however, in our setting, we can obtain a subset of $\mathcal{\hat T}^k_{h}(v^*)$ by identifying whether each component $h \in \mathcal{H'}$ enters predictive or corrective maintenance depending on decision $v^*$ and the failure times under scenario $k$. In particular, if component $h$ is scheduled for predictive maintenance under scenario $k$, we define this subset as the period that this component is scheduled for maintenance. On the other hand, if component $h$ enters corrective maintenance under scenario $k$, this subset consists of all maintenance periods from the failure time of component $h$ to the end of the planning horizon. 
\begin{corollary}
Given $v^* \in \mathcal{\hat V}$ and a subset $\mathcal{\hat T}'(v^*) \subseteq \mathcal{\hat T}^k_{h}(v^*)$, the following set of optimality cuts is valid and stronger than the set of optimality cuts given by \eqref{eq:optCut_K+}: 
\begin{equation} 
\theta^k \ge (\mathcal{Q}(v^*,\xi_k) - L^k) 
\sum_{h \in \mathcal{H}'} \big ( \sum_{t \in \mathcal{\hat T}'(v^*)} v_{ht}-1 \big) + \mathcal{Q}(v^*, \xi_k).
%\end{align}
\end{equation}

\end{corollary}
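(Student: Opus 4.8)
The plan is to mirror the two-part structure of Proposition~\ref{prop:optCut_K++}, reusing its validity case analysis almost verbatim and then establishing strength relative to \eqref{eq:optCut_K+} through a nesting of index sets. The guiding observation is the chain of inclusions $V_h(v^*) \subseteq \mathcal{\hat T}'(v^*) \subseteq \mathcal{\hat T}^k_h(v^*)$ for every $h \in \mathcal{H}'$: the right inclusion is the hypothesis of the corollary, while the left one holds for the concrete subset described just before the statement, since in the predictive case $\mathcal{\hat T}'(v^*) = V_h(v^*)$ and in the corrective case $V_h(v^*)$ lies among the periods running from the failure time to the end of the horizon.

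For validity, I would set $Q_h := \sum_{t \in \mathcal{\hat T}'(v^*)} v_{ht}$ and invoke constraint \eqref{constr:compact_totalMaintenanceNumberMP} to conclude $Q_h \in \{0,1\}$ for each $h$, exactly as in Proposition~\ref{prop:optCut_K++}. When some $Q_h = 0$, the coefficient $\sum_{h} Q_h - |\mathcal{H}'| \le -1$ drives the right-hand side down to at most $L^k \le \mathcal{Q}(v,\xi_k) \le \theta^k$, so the inequality is redundant. When $Q_h = 1$ for every $h$, the cut collapses to $\theta^k \ge \mathcal{Q}(v^*,\xi_k)$, and I would argue this is valid because each component is then maintained within its set $\mathcal{\hat T}'(v^*) \subseteq \mathcal{\hat T}^k_h(v^*)$, whence $\mathcal{Q}(v,\xi_k) = \mathcal{Q}(v^*,\xi_k)$ and the inequality holds with equality.

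For strength, I would fix any feasible $v$ and use the left inclusion $V_h(v^*) \subseteq \mathcal{\hat T}'(v^*)$ to obtain the termwise bound $\sum_{t \in \mathcal{\hat T}'(v^*)} v_{ht} \ge \sum_{t \in V_h(v^*)} v_{ht}$. Since $L^k$ is a valid lower bound on the recourse value, the coefficient $\mathcal{Q}(v^*,\xi_k) - L^k$ is nonnegative, so summing these inequalities over $h \in \mathcal{H}'$ and multiplying through shows that the right-hand side of the corollary cut dominates that of \eqref{eq:optCut_K+}, i.e. it imposes a tighter lower bound on $\theta^k$. This places the new cut between \eqref{eq:optCut_K+} and the generally uncomputable cut \eqref{eq:optCut_K++} in the strength ordering.

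The main obstacle I anticipate is the justification, in the all-$Q_h = 1$ branch of the validity argument, that the second-stage value is preserved when several components are reassigned \emph{simultaneously} to periods inside their respective sets $\mathcal{\hat T}'(v^*)$. The defining property of $\mathcal{\hat T}^k_h(v^*)$ through $R^k(v^*)$ only certifies value preservation one component at a time, so a literal reading would leave a gap. I would close it by appealing to time-decomposability and the notion of status: reassigning a component within its predictive period, or within the post-failure periods in the corrective case, leaves every status vector $u^k_t(v)$ unchanged, and since each decomposed subproblem \eqref{eq:recourseFunctionDecomposed} depends on $v$ only through these status vectors, the total recourse value $\mathcal{Q}(v,\xi_k)$ is invariant under the joint reassignment. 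This is the only step that goes beyond routine set nesting.
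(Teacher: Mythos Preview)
The paper states this corollary without proof, treating it as immediate from Proposition~\ref{prop:optCut_K++}. Your argument is correct and in fact more careful than the paper's treatment of the parent proposition.

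Two remarks. First, for validity you work harder than necessary: since $\mathcal{\hat T}'(v^*) \subseteq \mathcal{\hat T}^k_h(v^*)$ and $\mathcal{Q}(v^*,\xi_k)-L^k \ge 0$, the right-hand side of the corollary cut is termwise dominated by that of \eqref{eq:optCut_K++}, so validity follows directly from Proposition~\ref{prop:optCut_K++} without re-running the case split. This route sidesteps the simultaneous-reassignment issue you flag---or rather pushes it back into Proposition~\ref{prop:optCut_K++}, where the paper's own proof also glosses over it. Your status-vector argument is the right way to close that gap, and it is a genuine contribution beyond what the paper makes explicit.

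Second, you correctly observe that the strength claim hinges on $V_h(v^*) \subseteq \mathcal{\hat T}'(v^*)$, which is not guaranteed for an \emph{arbitrary} subset of $\mathcal{\hat T}^k_h(v^*)$ but does hold for the concrete subset the paper constructs just before the corollary (predictive case gives equality, corrective case contains the scheduled period since it lies at or after failure). The corollary should be read with that concrete subset in mind; your reading is the intended one.
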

As before, we can obtain the single-cut version of \eqref{eq:optCut_K++} by summing over all scenarios on both sides:
\begin{equation} \label{eq:eq:optCut_K++_single}
    \sum_{k \in \mathcal{K}} \theta^k \ge  \sum_{k \in \mathcal{K}} (\mathcal{Q}(v^*,\xi_k) - L^k) 
\sum_{h \in \mathcal{H}'} \big ( \sum_{t \in \mathcal{\hat T}^k_{h}(v^*) } v_{ht}-1 \big) +  \sum_{k \in \mathcal{K}} \mathcal{Q}(v^*, \xi_k).
\end{equation}

Next, we explain how to derive a different set of optimality cuts by exploiting the status idea (explained in Section \ref{ss:decompositionAlgorithm}). We first provide an overview of the idea on how to derive these alternative optimality cuts. After obtaining a maintenance decision by solving \eqref{eq:masterProblem}, we observe that there is no coupling constraint between maintenance periods in scenario subproblems. This allows us to obtain even smaller subproblems by decomposing with respect to independent maintenance periods. We refer to this property as time-decomposability of scenario subproblems before and the formulation of these subproblems are introduced in \eqref{eq:recourseFunctionDecomposed}. By using this property, we can rewrite \eqref{eq:optCut_K+} in the following form:
\begin{equation} \label{eq:optCut_KT+}
\setlength{\jot}{8pt}
\theta^k_t \ge (\mathcal{Q}_t(v^*,\xi_k) - L^k_t) 
\sum_{h \in \mathcal{H}'} \big(\sum_{t \in V_h(v^*)} v_{ht}-1\big) + \mathcal{Q}_t(v^*, \xi_k).
\end{equation}
Recall that given a maintenance decision $v^*$, we define the status vector for maintenance period $t$, denoted by $u_t^k(v^*)$, representing the availability of all components under scenario $k$. This property allows us to represent each scenario subproblem with respect to their status vectors and as a consequence, we restrict ourselves to the scenario subproblems such that corresponding status vectors are all unique. By considering these unique status vectors, we can obtain even stronger optimality cuts than \eqref{eq:optCut_KT+}. Given a maintenance decision $v^*$, we first define $\mathcal{\tilde T}_{ht}^k (v^*)$ as the set of periods such that maintaining a component $h$ in period $t$ will yield the same status $u_{ht}^k(v^*)$ under scenario $k$:
\begin{equation*}
    \mathcal{\tilde T}_{ht}^k (v^*) := \big\{ t'\in\mathcal{\bar T}: \exists v \in \mathcal{\hat V} \text{ such that } v_{ht'} = 1, \ u_{ht}^k (v^*) = u_{ht}^k (v) \big\}.
\end{equation*}
After identifying the status for component $h \in \mathcal{H'}$ in maintenance period $t \in \mathcal{\bar T}$ (explained in Section \ref{ss:decompositionAlgorithm}), we can easily obtain a subset of $\mathcal{\tilde T}_{ht}^k (v^*)$ under each scenario $k \in \mathcal{K}$.

\begin{proposition} \label{prop:optCut_KT+++}
Given a feasible maintenance decision $v^* \in \mathcal{\hat V}$, the following set of optimality cuts is valid and stronger than then the set of optimal cuts in \eqref{eq:optCut_KT+}:
\begin{equation} \label{eq:optCut_KT+++}
\setlength{\jot}{8pt}
\theta^k_{t} \ge (\mathcal{Q}_{t}(v^*,\xi_k) - L^k_{t}) 
\sum_{h \in \mathcal{H}'} \big ( \sum_{t' \in \mathcal{\tilde T}_{ht}^k(v^*) } v_{ht'}-1 \big) + \mathcal{Q}_{t}(v^*, \xi_k).
%\end{align}
\end{equation}
\end{proposition}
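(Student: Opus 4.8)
The plan is to mirror the structure of the proofs of Propositions \ref{prop:optCut_K+} and \ref{prop:optCut_K++}, since \eqref{eq:optCut_KT+++} is the time-decomposed, status-refined analogue of those cuts. The argument splits into two pieces: proving \emph{validity} (that every feasible $(v,\theta)$ satisfies the inequality) and proving \emph{strength} (that \eqref{eq:optCut_KT+++} dominates \eqref{eq:optCut_KT+}). For validity, I would introduce the quantity $Q_h := \sum_{t' \in \mathcal{\tilde T}_{ht}^k(v^*)} v_{ht'}$ and first argue that $Q_h \in \{0,1\}$ for every $h \in \mathcal{H}'$: this follows because $\mathcal{\tilde T}_{ht}^k(v^*) \subseteq \mathcal{\bar T}$ and constraint \eqref{constr:compact_totalMaintenanceNumberMP} forces exactly one maintenance period per component, so at most one term in the sum can be nonzero. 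Then I split into the two cases exactly as in Proposition \ref{prop:optCut_K+}: if $Q_h = 1$ for all $h \in \mathcal{H}'$, the multiplier $\sum_{h}(Q_h - 1)$ vanishes and the cut reduces to $\theta^k_t \ge \mathcal{Q}_t(v^*,\xi_k)$, which must hold at the point $v^*$ by definition of the recourse value; if $Q_h = 0$ for some $h$, then $\sum_h(Q_h - 1) \le -1$, and since $\mathcal{Q}_t(v^*,\xi_k) - L^k_t \ge 0$ the right-hand side is at most $L^k_t \le \theta^k_t$, so the cut is redundant and hence valid.

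The conceptual heart of validity, which I would make explicit, is the reason the cut remains valid when we replace the singleton $V_h(v^*)$ by the possibly larger set $\mathcal{\tilde T}_{ht}^k(v^*)$: by the definition of $\mathcal{\tilde T}_{ht}^k(v^*)$, any $v$ with $v_{ht'}=1$ for some $t' \in \mathcal{\tilde T}_{ht}^k(v^*)$ produces the \emph{same} status $u_{ht}^k(v)=u_{ht}^k(v^*)$ for component $h$ in period $t$. Since the decomposed recourse value $\mathcal{Q}_t(v,\xi_k)$ depends on $v$ only through the status vector $u_t^k(v)$ (this is precisely the observation underlying time-decomposability and the status concept in Section \ref{ss:decompositionAlgorithm}), any such $v$ that matches $v^*$ on the remaining components yields $\mathcal{Q}_t(v,\xi_k)=\mathcal{Q}_t(v^*,\xi_k)$. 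Thus enlarging the index set to $\mathcal{\tilde T}_{ht}^k(v^*)$ only enlarges the set of decisions at which the cut is tight, without ever overestimating $\mathcal{Q}_t$, so validity is preserved.

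For the strength claim, I would argue exactly as in Proposition \ref{prop:optCut_K++}: since $v^*$ schedules component $h$ in some period $t'$ with $V_h(v^*)=\{t'\}$, and since maintaining $h$ in $t'$ trivially yields the same status as $v^*$ itself, we have $t' \in \mathcal{\tilde T}_{ht}^k(v^*)$ and therefore $V_h(v^*) \subseteq \mathcal{\tilde T}_{ht}^k(v^*)$ for every $h$. Consequently $\sum_{t' \in \mathcal{\tilde T}_{ht}^k(v^*)} v_{ht'} \ge \sum_{t \in V_h(v^*)} v_{ht}$ for each $h$, and because the coefficient $\mathcal{Q}_t(v^*,\xi_k)-L^k_t$ is nonnegative, the right-hand side of \eqref{eq:optCut_KT+++} is at least that of \eqref{eq:optCut_KT+}, establishing dominance. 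I expect the main obstacle to be not the inequality manipulation, which is routine, but the justification that $\mathcal{Q}_t(v,\xi_k)$ is genuinely a function of the status vector $u_t^k(v)$ alone; this is the crux that lets the index set be expanded, and I would lean on the decomposition of the recourse function in \eqref{eq:recourseFunctionDecomposed}, where the only way $v$ enters the period-$t$ subproblem is through the coupling term $B_t v$, whose effect is exactly to encode component availability, i.e. the status.
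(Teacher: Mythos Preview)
Your proposal is correct and follows essentially the same approach as the paper: the paper's proof simply states that validity ``is similar as in Proposition~\ref{prop:optCut_K++}'' and then proves strength via the inclusion $V_h(v^*) \subseteq \mathcal{\tilde T}_{ht}^k(v^*)$, exactly as you do. If anything, you are more careful than the paper in making explicit the key fact that $\mathcal{Q}_t(v,\xi_k)$ depends on $v$ only through the status vector $u_t^k(v)$, which is precisely what justifies the case $Q_h=1$ for all $h$ when $v \neq v^*$; the paper leaves this implicit.
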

\begin{proof}[Proof.]
%Given a maintenance decision $v^* \in \mathcal{\hat V}$, we consider the quantity $Q_h \equiv \sum_{t \in \mathcal{\hat T}^k_{ht}(v^*) } v_{ht}$ with $Q_h \in \{0,1\}$ for every $h \in \mathcal{H}'$. If $Q_h = 1$, then the cut in \eqref{eq:optCut_KT+++} becomes $\theta^k_t \ge \mathcal{Q}_t(v^*, \xi_k)$. If $Q_h = 0$ for some $h \in \mathcal{H}'$, then the cut in \eqref{eq:optCut_K+} is not generated for the maintenance decision $v$. Therefore, we have $\sum_{h \in \mathcal{H}'} Q_h - |\mathcal{H}'| \le -1$.  In this case, the optimality cut \eqref{eq:optCut_K++} becomes redundant since $\theta^k_t \ge L^k_t + A$ where $A \le 0$. 
The proof of validity is similar as in Proposition \ref{prop:optCut_K++}. To prove the strength of the cut, let the maintenance decision of component $h$ under $v^*$ be in maintenance period $t'$, which clearly implies that $t' \in \mathcal{\tilde T}_{ht}^k(v^*)$ for every $k \in \mathcal{K}$ and $t \in \mathcal{T}$, and $V_h(v^*) = \{t'\}$. Then, we have the following relation $\sum_{t \in \mathcal{\tilde T}^k_{ht}(v^*)} v_{ht}\ge \sum_{t \in V_{h}(v^*)} v_{ht}$ since $V_h(v^*) \subseteq \mathcal{\tilde T}_{ht}^k(v^*)$ holds for every $k \in \mathcal{K}$ and $t \in \mathcal{T}$. This implies that \eqref{eq:optCut_KT+++} is stronger than \eqref{eq:optCut_KT+}.
\Halmos
\end{proof}

\begin{corollary}
Given $v^* \in \mathcal{\hat V}$ and a subset $\mathcal{\tilde T}'(v^*) \subseteq \mathcal{\tilde T}^{k}_{ht}(v^*)$, the following set of optimality cuts is valid and stronger than then the set of optimal cuts in \eqref{eq:optCut_KT+}:
\begin{equation} 
\setlength{\jot}{8pt}
\theta^k_{t} \ge (\mathcal{Q}_{t}(v^*,\xi_k) - L^k_{t}) 
\sum_{h \in \mathcal{H}'} \big ( \sum_{t' \in \mathcal{\tilde T}'(v^*)} v_{ht'}-1 \big) + \mathcal{Q}_{t}(v^*, \xi_k).
%\end{align}
\end{equation}
\end{corollary}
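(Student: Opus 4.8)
The plan is to mirror the two-case argument already used for Proposition~\ref{prop:optCut_KT+++}, noting that passing from the full index set $\mathcal{\tilde T}_{ht}^k(v^*)$ to a subset $\mathcal{\tilde T}'(v^*)$ changes nothing essential in the validity step, while the strength comparison is now drawn directly against \eqref{eq:optCut_KT+}. First I would fix a scenario $k$, a maintenance period $t$, and an arbitrary feasible decision $v \in \mathcal{\hat V}$, and set $Q_h := \sum_{t' \in \mathcal{\tilde T}'(v^*)} v_{ht'}$. Because constraint~\eqref{constr:compact_totalMaintenanceNumberMP} forces exactly one maintenance period per component and $\mathcal{\tilde T}'(v^*) \subseteq \mathcal{\bar T}$, each $Q_h \in \{0,1\}$, so the term $\sum_{h \in \mathcal{H}'}(Q_h - 1)$ appearing in the cut is a nonpositive integer.

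For validity I would split into the two familiar cases. When $Q_h = 1$ for every $h \in \mathcal{H}'$, the right-hand side of the cut collapses to $\mathcal{Q}_t(v^*,\xi_k)$; the key point is that each component is then maintained in some period lying in $\mathcal{\tilde T}'(v^*) \subseteq \mathcal{\tilde T}_{ht}^k(v^*)$, which by the defining property of $\mathcal{\tilde T}_{ht}^k(v^*)$ leaves the status $u_{ht}^k$ unchanged for every $h$. Since the decomposed recourse value $\mathcal{Q}_t(\cdot,\xi_k)$ depends on $v$ only through the status vector $u_t^k(\cdot)$, we obtain $\mathcal{Q}_t(v,\xi_k) = \mathcal{Q}_t(v^*,\xi_k)$, so the cut holds with equality. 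When $Q_h = 0$ for at least one $h$, then $\sum_{h}(Q_h-1) \le -1$; multiplying by the nonnegative coefficient $\mathcal{Q}_t(v^*,\xi_k)-L^k_t$ shows the right-hand side is at most $L^k_t \le \mathcal{Q}_t(v,\xi_k)$, so the cut is redundant and therefore valid.

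For the strength claim I would use that the maintenance period $t'$ assigned to component $h$ under $v^*$ trivially realizes the status $u_{ht}^k(v^*)$, so $t' \in \mathcal{\tilde T}_{ht}^k(v^*)$, and---because the subset $\mathcal{\tilde T}'(v^*)$ is constructed to retain the period actually chosen by $v^*$---we also have $V_h(v^*) = \{t'\} \subseteq \mathcal{\tilde T}'(v^*)$. Consequently $\sum_{t' \in \mathcal{\tilde T}'(v^*)} v_{ht'} \ge \sum_{t' \in V_h(v^*)} v_{ht'}$ for every $h$, which produces a larger (or equal) right-hand side than \eqref{eq:optCut_KT+} and shows the cut is at least as strong.

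The main obstacle I anticipate is precisely this containment $V_h(v^*) \subseteq \mathcal{\tilde T}'(v^*)$: it is not implied by the bare hypothesis $\mathcal{\tilde T}'(v^*) \subseteq \mathcal{\tilde T}_{ht}^k(v^*)$, so I would need to make explicit (as in the analogous corollary to Proposition~\ref{prop:optCut_K++}) that any admissible subset must retain the period chosen by $v^*$; without this the cut can fail to dominate \eqref{eq:optCut_KT+} and may even fail validity when evaluated at $v = v^*$. Everything else is a routine replay of the case analysis, relying only on the integrality of $Q_h$ coming from \eqref{constr:compact_totalMaintenanceNumberMP}, the nonnegativity of the cut coefficient $\mathcal{Q}_t(v^*,\xi_k)-L^k_t$, and the status-invariance of $\mathcal{Q}_t$.
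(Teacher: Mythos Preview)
Your approach is essentially the paper's: the corollary is stated without proof and is meant to follow by replaying the two-case argument of Proposition~\ref{prop:optCut_KT+++}, exactly as you do. Your validity argument is in fact more careful than the paper's, since you make explicit that when $Q_h=1$ for all $h$ the status vector $u_t^k(v)$ coincides with $u_t^k(v^*)$ and hence $\mathcal{Q}_t(v,\xi_k)=\mathcal{Q}_t(v^*,\xi_k)$; the paper's propositions simply say ``the proof of validity follows as in Proposition~\ref{prop:optCut_K+}'' without spelling this out.

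You are also right to flag that the strength claim requires $V_h(v^*)\subseteq\mathcal{\tilde T}'(v^*)$, which the bare hypothesis $\mathcal{\tilde T}'(v^*)\subseteq\mathcal{\tilde T}_{ht}^k(v^*)$ does not guarantee. The paper intends this implicitly---the preceding paragraph explains that the subset is constructed from the status of $v^*$, so it always contains the period actually chosen---but as literally stated the corollary is slightly imprecise, and you have correctly located the gap.

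One small correction: validity does \emph{not} fail at $v=v^*$ even when $V_h(v^*)\not\subseteq\mathcal{\tilde T}'(v^*)$. In that situation $Q_h=0$ for the offending $h$, so your Case~2 applies and the right-hand side is at most $L_t^k\le\mathcal{Q}_t(v^*,\xi_k)$; the cut is merely slack at $v^*$, not violated. Only the domination over \eqref{eq:optCut_KT+} is lost.
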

%\citet{Laporte1993} proved the property of finite convergence of the integer L-shaped method when the first-stage variables are pure binary. In our case, Proposition \ref{prop:finiteConvergence} follows immediately since our decomposition algorithm (\ref{alg:decomposition}) is an enhanced version of integer L-shaped method with various algorithmic enhancements.
We conclude this section by proving the property of finite convergence of our decomposition algorithm (Algorithm \ref{alg:decomposition}).

\begin{proposition} \label{prop:finiteConvergence}
Algorithm \ref{alg:decomposition} converges in finitely many iterations. 
\end{proposition}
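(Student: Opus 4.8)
The plan is to establish finite convergence by showing that Algorithm \ref{alg:decomposition} can only execute finitely many distinct iterations, since each iteration either permanently removes an infeasible maintenance decision from consideration (via the cutting-plane stage) or permanently tightens the approximation of the recourse function at one of the finitely many feasible first-stage solutions (via the optimality-cut stage). The crucial structural observation is that the first-stage decision space is finite: by constraints \eqref{constr:compact_totalMaintenanceNumberMP} and \eqref{constr:compact_sign_1}, the set $\mathcal{\hat V}$ is a subset of the finite set $\{0,1\}^{|\mathcal{H}'| \times |\mathcal{\bar T}|}$, and moreover each component is scheduled exactly once, so $|\mathcal{\hat V}|$ is bounded by $|\mathcal{\bar T}|^{|\mathcal{H}'|}$. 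All the accounting below rests on this finiteness.

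**First I would** handle the cutting-plane stage used under the exact reformulation. When \texttt{RepresentChance}$(v)$ returns \textbf{false} for a candidate $v$ obtained from \eqref{eq:masterProblemCover}, a cover inequality of the form \eqref{eq:extendedCoverIneq} is added to $\mathcal{C}$, and by Proposition \ref{prop:ExtCover} this cut is valid for $\mathcal{\hat V}$ while cutting off the current infeasible $v$ (indeed $v$ satisfies $\sum_{(h,t)\in E(C)} v_{ht} = |\mathcal{H}'|$, violating the cut). I would argue that the same infeasible first-stage point can therefore never be regenerated, so the number of cutting-plane-only iterations is bounded by the number of first-stage points in $\{0,1\}^{|\mathcal{H}'|\times|\mathcal{\bar T}|}$ satisfying \eqref{constr:compact_totalMaintenanceNumberMP} but violating \eqref{constr:compact_jointChance}, which is finite. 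Under the safe approximation this stage is vacuous, since every solution returned is already feasible with respect to the conservative description.

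**Next I would** treat the optimality-cut stage. Each time a feasible $v \in \mathcal{\hat V}$ is accepted, \texttt{OptimalityCut}$(v,\xi,L)$ adds a cut (of one of the forms \eqref{eq:classical_Lshaped}, \eqref{eq:optCut_K+}, \eqref{eq:optCut_K++}, or \eqref{eq:optCut_KT+++}) that, when evaluated at $v$ itself, forces $\theta^k \ge \mathcal{Q}(v,\xi_k)$ (respectively $\theta^k_t \ge \mathcal{Q}_t(v,\xi_k)$), since the indicator sums collapse to $\sum_{h\in\mathcal{H}'}(1-1)=0$ at $v$. Combined with the lower-bound constraints \eqref{constr:lowerBoundMP} and the validity of every cut (so that $\theta^k \ge \mathcal{Q}(v,\xi_k)$ holds at every $v\in\mathcal{\hat V}$ once the cut is present), this means that when a particular $v$ is revisited, the relaxed master problem value at $v$ already equals the true objective $\sum_k \pi^k(c_k^\top v + \mathcal{Q}(v,\xi_k))$, so $LB$ is pushed up to meet $UB \le$ this value and the termination test $LB/UB \ge 1-\epsilon$ is met. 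Hence each element of the finite set $\mathcal{\hat V}$ can be selected as the master solution at most once before triggering termination, and I would record that the number of such iterations is bounded by $|\mathcal{\hat V}|$.

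**The hard part will be** cleanly arguing that no feasible first-stage point is ever produced twice without termination having already occurred, because in principle two different $(v,\theta)$ iterates could share the same $v$ while the master problem continues. I would resolve this by establishing the monotone bookkeeping of the bounds: $UB$ is nonincreasing and records the best true objective found, while $LB$ is the optimal value of a relaxed master problem whose feasible region only shrinks as cuts accumulate. If $v$ is regenerated, all cuts associated with $v$ are already in $\Theta$, forcing the master's objective at $v$ to equal the true objective, whence $LB \ge$ that value $\ge UB$ and the algorithm stops at the top of the \textbf{while} loop. Combining the finite bound on cutting-plane-only iterations with the finite bound $|\mathcal{\hat V}|$ on feasibility-accepting iterations then yields finite termination, completing the proof.
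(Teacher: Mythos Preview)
Your proposal is correct and follows essentially the same approach as the paper: both rely on the finiteness of the pure-binary first-stage space to bound the number of optimality cuts (appealing to the integer L-shaped framework) and, under the exact reformulation, the number of cover inequalities that can be generated. Your version is considerably more detailed than the paper's terse argument; one small wording issue is the parenthetical claiming a single cut enforces $\theta^k \ge \mathcal{Q}(v,\xi_k)$ at \emph{every} $v\in\mathcal{\hat V}$---validity only guarantees this at the generating point $v^*$, but your subsequent revisit argument uses exactly that and is sound.
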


\begin{proof}[Proof.]
We observe that there are only finitely many feasible first-stage decisions since each maintenance decision is pure binary. In view of this observation and the integer L-shaped algorithm, when the safe approximation of the joint chance-constraint is used, we can add finitely many optimality cuts that lead to the convergence of Algorithm \ref{alg:decomposition} in finitely many iterations. When the exact representation of the joint chance-constraint is used, we can add a finite number of violated cover inequalities that can be identified through Algorithm \ref{alg:representChance} since this constraint includes only the first-stage decisions. 
\Halmos
\end{proof}

\subsection{Flow Limit Analysis} \label{s:flowLimitAnalysis}
% The set of such constraints is called redundant if and only if the feasible region remains the same without imposing this set on the resulting optimization problem. 
Optimization problems in power systems may involve redundant transmission flow limits. When this redundancy is identified and handled efficiently, computational requirements for solving the optimization problem may potentially decrease. Given a power demand vector $\bar d$, the following relaxation of the operational subproblem can be used to identify such redundancy:
%Respective optimization-based methods (\citet{eliminationComposite}, \citet{eliminationSCUC}, \citet{eliminationNCUC}) and analytical methods (\citet{eliminationAnalytic}, \citet{eliminationDCOPF}) have been studied in the literature. 
 %For each transmission line $(i,j) \in \mathcal{L}''$, we solve the following linear program:
\begin{subequations} \label{eq:discardLineModel}
\begin{align}
f^*_{i',j'}(\bar d) = \max_{{ f, y, x, \delta, p, q, d }} &\hspace{0.5em} f_{i'j'}\\ 
\mathrm{s.t.}
&\hspace{0.5em} q_i \le d_i \le \bar d_i &i& \in \mathcal{B} \label{constr:discardWorst}\\
&\hspace{0.5em} \sum_{i' \in \mathcal{G}(i)} p_{i'} + q_{i} -d_{i} = \sum_{j \in \delta^+(i)} f_{ij} - \sum_{j \in \delta^-(i)} f_{ji}  & i& \in \mathcal{B} \label{constr:discardFlowBalance}\\ 
&\hspace{0.5em} B_{ij}(\delta_{i}-\delta_{j}) = f_{ij}  &(&i,j) \in \mathcal{L}'' \label{constr:discardFlowDef} \\
& \hspace{0.5em} B_{ij}(\delta_{i}-\delta_{j}) - M_{ij}(1 - y_{ij}) \le f_{ij} && \nonumber\\
&\hspace{1.5em} \le B_{ij}(\delta_{i}-\delta_{j}) + M_{ij}(1 - y_{ij}) &(&i,j) \in \mathcal{L'} \label{constr:discardVoltage}\\
%& \hspace{0.5em} -\Bar{f}_{ij} \le  f_{ij}  \le \Bar{f}_{ij} &(&i,j) \in \mathcal{\tilde L} \label{constr:discardFlowLimits}\\
& \hspace{0.5em} -\Bar{f}_{ij}y_{ij} \le  f_{ij}  \le \Bar{f}_{ij}y_{ij} &(&i,j) \in \mathcal{L'} \label{constr:discardFlowLimitsL}\\
& \hspace{0.5em} p_i^{min}x_{i}  \le p_{i} \le p_i^{max}x_{i}     &i& \in \mathcal{G} \label{constr:discardPowerGen}\\
&\hspace{0.5em} y \in [0,1]^{|\mathcal{L'}|},  \ x \in [0,1]^{|\mathcal{G}|}, \delta \in [\delta^{\min}, {\delta}^{\max}], q \ge 0.
\end{align}
\end{subequations}
In this formulation, the decision variables $f, y, x, p, \delta, q$ represent power flow, switching status of transmission lines, commitment status and power generation of generators, voltage angle and demand curtailment of buses, respectively. We also define another continuous decision variable $d_i$ which represents power demand for $i \in \mathcal{B}$. Constraint \eqref{constr:discardWorst} ensures that $d_i$ remains feasible for the operational subproblems. The remaining constraints are operational constraints implied by the power network. We note that \eqref{eq:discardLineModel} is a relaxation of the original operational problem, since switching variables are considered as continuous and start-up, shut-down restrictions are omitted by focusing the analysis on a single period. 

Recently, \citet{Basciftci2018} solve a relaxation of the operational subproblem to identify redundant flow limits by considering the peak demand of each bus within the planning horizon. Their model (referred to as \textit{FlowModel-I} in our paper) is similar to \eqref{eq:discardLineModel} when $\bar d = [\max_{t \in \mathcal{T}, s \in \mathcal{S}}\{d_{its}\}; i \in \mathcal{B}]$. 
%However, the authors do not consider the time-decomposability of operational subproblems. 
By utilizing the time-decomposability of operational subproblems, we improve their model (referred to as \textit{FlowModel-II}) by replacing $\bar d$ with $\bar d_t = [\max_{s \in \mathcal{S}}\{d_{its}\}; i \in \mathcal{B}]$ and solve \eqref{eq:discardLineModel} for every $t \in \mathcal{T}$. Similarly, we can easily improve this model (referred to as \textit{FlowModel-III}) by replacing $\bar d$ with the actual power demand in hourly subperiod $s$ of maintenance period $t$, that is, $\bar d_{ts} = [d_{its}; i \in \mathcal{B}]$ and solve \eqref{eq:discardLineModel} for every $t \in \mathcal{T}$ and $s \in \mathcal{S}$. Given a transmission line $(i',j')$ and power demand $\bar d$, suppose we solve the linear program $\eqref{eq:discardLineModel}$ and obtain an optimal solution $f^*_{i'j'}(\bar d)$. If $f^*_{i'j'}(\bar d)$ is strictly less than $\bar f_{i'j'}$, we ensure that flow upper limit corresponding for transmission line $(i', j')$ will not be violated which allows us to eliminate the corresponding constraint from the optimization model \eqref{eq:optimizationModel}. Otherwise, we impose this constraint for transmission line $(i',j')$. Similarly, we can also identify redundant lower flow limits by changing the objective function of \eqref{eq:discardLineModel} with $-f_{i'j'}(\bar d)$. To this end, the number of constraints that can be eliminated depending on the choice of the demand parameter, and how many times the corresponding model is solved.

\subsection{Sample Average Approximation} \label{s:sampleAverageApproximation}
%The size of the set of all possible scenarios grows exponentially in the number of components, i.e., $|\mathcal{\bar T}|^{|\mathcal{H'}|}$. Although this set itself is finite, enumerating  many scenarios within a reasonable amount of time is computationally arduous for solving the true problem \eqref{eq:twoStageCompact}. To address this issue, we adapt the SAA algorithm, introduced by \citet{SAA_Kleywegt}, to solve the SAA problem of the true counterpart.
Since the number of scenarios of our stochastic program grows exponentially fast in the number of system components considered for maintenance, i.e., $|\mathcal{\bar T}|^{|\mathcal{H'}|}$, solving this program becomes computationally more demanding as the instance size increases. Thus, we solve this problem using the SAA algorithm (Algorithm \ref{alg:SAA} in Appendix \ref{app:SAA}).
%we adapt the SAA algorithm, introduced by \citet{SAA_Kleywegt} to solve \eqref{eq:twoStageCompact}. 
%Next, we state Algorithm \ref{alg:SAA} for the joint chance-constrained stochastic optimization problem \eqref{eq:optimizationModel}. 
In our setting, the set of training scenarios are generated over the components $\mathcal{H'}$, whereas the set of test scenarios are generated over the all set of components $\mathcal{H}$ to evaluate the true performance of the proposed approach. We first generate SAA replications of size $M$, each consisting of independent and identically distributed (i.i.d.) failure scenarios of size $N$. We solve the corresponding SAA problem for each replicate and obtain their optimal values and $\epsilon$-optimal solutions. By averaging these optimal values, we obtain the mean estimate for the true lower bound. We later evaluate each $\epsilon$-optimal solution over a sample size of $N'$ with $N' \gg N$ and choose the best candidate solution among all $\epsilon$-optimal solutions by setting the corresponding objective value as the best upper bound estimate. In Step \ref{alg:SAA_CIofUB} and Step \ref{alg:SAA_CIofLB} of Algorithm \ref{alg:SAA}, we construct the upper and lower statistical bounds, developed by \citet{Mak99}, to assess the quality of the optimal solution produced by the SAA algorithm, respectively. These statistical bounds are used to construct confidence intervals (CIs) for estimating the optimality gap between the optimal value produced by Algorithm \ref{alg:SAA} and the optimal value of the true problem. %After obtaining the upper and lower statistical bounds, an approximate $(1-\alpha)$ level confidence interval for the true objective value is constructed by $(\hat \mu_L - t_{\alpha/2, M-1} \hat \sigma_L, \ \hat \mu_U + z_{\alpha/2} \hat \sigma_U)$.
%
%
%
% ALGORITHM SAA
%
%
%
%Note that \citet{SAA_Kleywegt} demonstrate that the event that the optimal solution produced by the SAA problem will be an exact optimal solution for the true optimal solution happens with probability approaching one exponentially fast as the number of scenarios increases. The authors also argue about the trade-off between the quality of the solution produced by the SAA algorithm and the computational complexity as the sample size increases, thus, the sample size of each replicate can be chosen based preliminary computations.

\section{Computational Experiments} \label{s:computationalExperiments}
To demonstrate the computational performance and efficiency of the proposed algorithm, we conduct an extensive computational study on various modified IEEE instances from MATPOWER \citep{MATPOWER}. In Section \ref{ss:ExperimentalSetup}, we explain the experimental setup in detail. In Section \ref{ss:performanceOfTheProposedAlgorithm}, we show the computational efficiency of our algorithmic enhancements and sets of optimality cuts with parallelization in comparison with the state-of-the-art solver \texttt{GUROBI}. We provide the statistical results on the true optimal value produced by the SAA algorithm with different sizes of failure scenarios in Section \ref{ss:SAAResults}. We evaluate the quality of the maintenance schedules obtained by the proposed stochastic models in Section \ref{ss:modelComparison}. Lastly, we investigate the effects of the cardinality of the sets $\mathcal{G'}$ and $\mathcal{L'}$ in Section \ref{ss:sensitivityAnalysis}.

\subsection{Experimental Setup} \label{ss:ExperimentalSetup}

\subsubsection{Instance Creation}
For our joint chance-constrained stochastic program model, we consider a one-week planning period with daily maintenance decisions and hourly operational decisions. The planning horizon starts on a Monday at $00:00$. We obtain the weekly electricity consumption data available from the U.S. Energy Information Administration \citep{EIA} since actual power demand parameters in standard IEEE instances are given for an hourly period for each bus. We later use this data to generate a new power demand dataset through normalization such that $\{d_{its}; t \in \mathcal{T}, s \in \mathcal{S}\}$ follows a similar trend for every $i \in \mathcal{B}$. Since corrective maintenance is undesirable and unexpected, it is more expensive and takes longer amount of time compared to predictive maintenance.  Specifically, we assume that the maintenance durations are $\tau^{p}_{{\scriptscriptstyle \mathcal{G}}} = \tau^{p}_{{\scriptscriptstyle \mathcal{L}}} = 1$ and $\tau^{c}_{{\scriptscriptstyle \mathcal{G}}} = \tau^{c}_{{\scriptscriptstyle \mathcal{L}}} = 2$ days. We also assume that maintenance cost for generators is a function of generation cost and generation capacity. In particular, we let $C_i^p = \bar p_i c_i |\mathcal{S}|$ for $i \in \mathcal{G}$. %In other words, we assume that daily predictive maintenance costs for generators are according to their daily maximum loss production. 
Additionally, we let $C_i^c = 3 C_i^p$ for $i \in \mathcal{G}$, $C_{ij}^p = 0.1 \sum_{i \in G} C_i^p / |\mathcal{G}|$ and $C_{ij}^c = 3 C_{ij}^p$ for $(i,j) \in \mathcal{L}$. We have chosen the constant $M_{ij}$ sufficiently large for $(i,j) \in \mathcal{L'}$ such that constraint \eqref{linearizedAngle} becomes redundant when $y_{ijts}^k = 1$. In particular, we let $M_{ij} = B_{ij}({\delta}_i^{\max} -{\delta}_j^{\min})$ for $(i,j) \in \mathcal{L'}$ (e.g., see \citet{OTS-bigM}). We choose the probability thresholds $p_{fail}^{\mathcal{G}} = 0.1$ and $p_{fail}^{\mathcal{L}} = 0.2$ for generators and transmission lines, respectively. We then identify those system components prone to failure within the planning horizon as explained in Section \ref{ss:problemSetting}. For the computational experiments subject to the joint chance-constraint \eqref{constr:compact_jointChance}, the thresholds $\rho_{\mathcal{G}}$ and $\rho_{\mathcal{L}}$ are set to $1$ and $\max\{1, \lfloor | \mathcal{L}| / 20 \rfloor\}$, respectively. These experiments are conducted with a probability level $\alpha = 0.1$ of the joint chance-constraint. We report the cardinality of the subsets of $\mathcal{G}$ and $\mathcal{L}$, and the threshold parameters of the joint chance-constraint for each instance in Table \ref{table:cardinalityAndRhos}.
\begin{table}[H]
\centering
\begin{tabular}{rcccccc}
\hline
& \multirow{2}{*}{$|\mathcal{G'}|$} & \multirow{2}{*}{$|\mathcal{G''}|$} & \multirow{2}{*}{$|\mathcal{L'}|$} & \multirow{2}{*}{$|\mathcal{L''}|$} & \multirow{2}{*}{$\rho_{\mathcal{G}}$} & \multirow{2}{*}{$\rho_{\mathcal{L}}$} \\
          &                                   &                                    &                                   &                                    &                                       &                                       \\ \hline
$9$-bus   & 1                                 & 2                                  & 3                                 & 6                                  & 1                                     & 1                                     \\
$39$-bus  & 4                                 & 6                                  & 4                                 & 42                                 & 1                                     & 2                                     \\
$57$-bus  & 2                                 & 5                                  & 7                                 & 73                                 & 1                                     & 4                                     \\
$118$-bus & 4                                 & 15                                 & 9                                 & 177                                & 1                                     & 9                                     \\ \hline
\end{tabular}
\caption{Cardinality of Sets and Threshold Parameters.}
\label{table:cardinalityAndRhos}
\end{table}
We generate a dataset consisting of unique degradation signals due to the lack of publicly available data to estimate the parameters of the prior distributions of $\upsilon_h$ and $\beta_h$ for $h \in \mathcal{H}$. In power systems, it is realistic to assume that generators are more likely to fail than transmission lines (see, for example, \citet{Papa2015}). Thus, we follow this assumption with our dataset. For simplicity, we assume that the variance of $\upsilon_h$ and $\beta_h$ are indeed known and held constant over the planning horizon for $h \in \mathcal{H}$. Therefore, we are only interested in estimating the prior mean of $\upsilon_h$ and $\beta_h$, denoted by $\mu_0$ and $\mu_1$, respectively. First, we focus on estimating $\mu_0$ and $\mu_1$ among the set of generators. For that purpose, we generate $100$ unique degradation signals. Let us label these degradation signals with an index $j$ where $j = 1, \dots, 100$. We assume that degradation signal $j$ has the functional form \eqref{eq:degradationSignal} with $\upsilon_j \sim \mathcal{N} (20, 10^2)$ and $\beta_j \sim \mathcal{N} (5, 0.3^2)$ and $\sigma_j = 3$ for $j = 1, \dots, 100$. The degradation signal threshold $\Lambda$ is set to $100$. We observe degradation signal $j$ at discrete time points until a failure time $\xi_j = \{t : D_j(t) \ge 100, \ t \ge 0\}$ for $j = 1, \dots, 100$. We remind the reader that $D_j^i$ is defined as the increment of degradation signals between times $t_j^i$ and $t_j^{i-1}$ for $i = 2, \dots, \xi_j$ where $D_j^1 = D_j(1)$, for $j=1,\dots, 100$. We find the point estimate of $\mu_0$ with $\sum_{j = 1}^{100} D^1_j / 100$. To obtain the point estimate of $\mu_1$, we first compute the prior mean estimate of $\beta_j$ as $ \hat{\mu}_j = ( \sum_{i=1}^{\xi_j}D^i_j - D^1_j) / \xi_j$ for $j = 1,\dots, 100$. Then, we find the point estimate of $\mu_1$ with $\sum_{j = 1}^{100} \hat{\mu}_j / 100$. Eventually, we obtain the prior mean estimate among the set of generators. Secondly, to estimate $\mu_0$ and $\mu_1$ among the set of transmission lines, we follow a similar procedure after generating $100$ unique degradation signals with $\upsilon_j \sim \mathcal{N} (15, 5^2)$ and $\beta_j \sim \mathcal{N} (3, 0.3^2)$ and $\sigma_j = 1$ for $j = 1, \dots, 100$. Finally, we obtain the prior mean estimates of the stochastic parameters $\upsilon_h$ and $\beta_h$ of the degradation signal model for every $h \in \mathcal{H}$.

Next, we obtain the posterior distribution of the unknown parameters of $\upsilon_h$ and $\beta_h$ for $h \in \mathcal{H}$ with a Bayesian approach given the recently observed real-time condition-based information. 
%The posterior distributions of the degradation signal model parameters can be used to estimate the RLD of each component. 
For that purpose, we generate $100$ unique degradation signals with a random initial signal amplitude. For the sake of easier modeling, we assume that these degradation signals were observed at some random discrete times. We further assume that random observation time $t^k_h$ for component $h$ follows a uniform distribution on $[1, (\Lambda-\mu_0)/(\mu_1 + 3\kappa_1)]$. This assumption implies that degradation signal for component $h \in \mathcal{H}$ was observed when it had been drastically degrading with a gradual linear drift. Under these assumptions, we obtain the posterior mean of the drift parameter $\beta_h$ of form \eqref{posteriorDrift}, which easily yields us to identify the RLD of each component $h \in \mathcal{H}$ (Proposition \ref{prop:inverseGauss}). Consequently, we select set $\mathcal{H'}$ by means of RLDs as discussed in Section \ref{ss:problemSetting}.

\subsubsection{Computational Setup}
 The code for each algorithm is written in Python using Spyder IDE. We use a 64-bit computer with Intel Xeon W-2255 CPU with a 2.20 GHz processor and 32 GB of memory space, running on the Windows operating system. The Gurobi Optimizer (\texttt{GUROBI}) is used to solve the pure binary integer first-stage problem \eqref{eq:twoStageCompact} and the mixed-integer operational subproblems \eqref{eq:recourseFunction}. To benefit from the decomposition of the operational subproblems throughout the implementation, we employ Joblib library for parallel computing. We use PoissonBinomial library \citep{pypi} as our probability oracle. We allow \texttt{GUROBI} to use 20 threads for solving \eqref{eq:twoStageCompact}, however, we set the number of parallel threads parameter \texttt{Threads} to 1 for solving \eqref{eq:recourseFunction} when using parallelization. We use the relative optimality gap, $\% (UB-LB)/UB$, as a stopping criteria within Algorithm \ref{alg:decomposition}. For each computational experiment, the relative optimality gap tolerance \texttt{MIPGap} is chosen as the same as the tolerance parameter $\epsilon$ of Algorithm \ref{alg:decomposition}. Time limit for all experiments is set to 6 hours. The time for computational experiments is measured in seconds. Note that each operational subproblem is solved to optimality within the tolerance $\epsilon$.
%\subsection{Computational Results} \label{ss:ComputationalResults}
%In this section, we present the computational results of our extensive computational study.

\subsection{Performance of the Proposed Algorithm} \label{ss:performanceOfTheProposedAlgorithm}
In this section, we illustrate the computational efficiency of the proposed algorithm from three aspects. We first benchmark the performance of the proposed optimality cuts and algorithmic enhancements against the standard integer L-shaped optimality cut and the state-of-the-art solver \texttt{GUROBI} for different sizes of failure scenarios under exact and approximate representations of the joint chance-constraint. Secondly, we present the speedup of the proposed algorithm gained from parallel computing. We conclude this section by comparing the performance of each $FlowModel$ used for transmission line flow analysis over different instances.

\subsubsection{Benchmark of the Proposed Algorithm}
We derive different sets of optimality cuts based on the integer L-shaped optimality cuts (Section \ref{s:optimalityCutFamilies}) and introduce various algorithmic enhancements such as time-decomposability of scenario subproblems and status of system components (Section \ref{ss:decompositionAlgorithm}). By using both the exact representation (referred to as SP$_{\tiny \text{exact}}$) and the SOCP-based safe approximation (referred to as SP$_{\tiny \text{safe}}$) of the joint chance-constraint, we compare them against each other over the illustrative $9$-bus instance under failure scenarios of size $50, 100$ and $200$. When using \texttt{GUROBI} for SP$_{\tiny \text{exact}}$, we first relax our optimization model by removing the joint chance-constraint and obtain a solution within the time limit. Then, we check the feasibility status of this solution with respect to the joint chance-constraint with Algorithm \ref{alg:representChance}. When this solution is feasible, we conclude that it is indeed optimal. Otherwise, we discard this solution from the set of feasible solutions by adding \eqref{eq:extendedCoverIneq} to our optimization model and resolve it by \texttt{GUROBI}. We investigate the differences between these optimality cuts and algorithmic enhancements by setting the tolerance parameter $\epsilon$ to $10^{-2}$. Our computational results are shown in Table \ref{table:benchmark} for the following cases of Algorithm \ref{alg:decomposition}:
    \begin{itemize}
        \item \texttt{intLS}: The set of classical integer L-shaped optimality cuts in \eqref{eq:classical_Lshaped_single}.
        \item \texttt{optCut}: The set of improved optimality cuts in  \eqref{eq:eq:optCut_K+_single}.
        \item \texttt{optCut}$_{\footnotesize+}$: The set of improved optimality cuts in \eqref{eq:eq:optCut_K++_single}. 
        \item \texttt{intLS}$^*$: The set of classical integer L-shaped optimality cuts in \eqref{eq:classical_Lshaped_single} with time-decomposability of scenario subproblems and status of system components. 
        \item  \texttt{optCut}$^*$: The set of improved optimality cuts in  \eqref{eq:eq:optCut_K+_single} with time-decomposability of scenario subproblems and status of system components.
        \item \texttt{optCut}$^*_{\footnotesize ++}$: The set of improved optimality cuts in \eqref{eq:optCut_KT+++} with time-decomposability of scenario subproblems and status of system components.
        \item \texttt{GUROBI}: The state-of-the-art solver \texttt{GUROBI}.
    \end{itemize}

\begin{table}
\begin{center}{\scalebox{1}{
\begin{tabular}{lcrrrrrrrll}
\hline
\multirow{2}{*}{}     & \multicolumn{1}{c}{\multirow{2}{*}{$|\mathcal{K}|$}} & \multicolumn{1}{c}{\multirow{2}{*}{\texttt{intLS}}} & \multicolumn{1}{c}{\multirow{2}{*}{\texttt{optCut}}} & \multicolumn{1}{c}{\multirow{2}{*}{\texttt{optCut}$_{\footnotesize+}$}} & \multicolumn{1}{l}{\multirow{2}{*}{\texttt{intLS}$^*$}} & \multicolumn{1}{c}{\multirow{2}{*}{\texttt{optCut}$^*$}} & \multicolumn{1}{c}{\multirow{2}{*}{\texttt{optCut}$^*_{\footnotesize ++}$}} & \multicolumn{1}{c}{\multirow{2}{*}{\texttt{GUROBI}}} &  & \multicolumn{1}{c}{\multirow{2}{*}{Speedup}} \\
                      & \multicolumn{1}{l}{}                                 & \multicolumn{1}{l}{}                         & \multicolumn{1}{l}{}                          & \multicolumn{1}{l}{}                            & \multicolumn{1}{l}{}                           & \multicolumn{1}{l}{}                            & \multicolumn{1}{l}{}                                                  & \multicolumn{1}{l}{}                                 &  & \multicolumn{1}{l}{}                         \\ \hline
\multirow{6}{*}{{SP$_{\tiny \text{exact}}$}}  & \multirow{2}{*}{50}                                  & \multirow{2}{*}{2222.29}                     & \multirow{2}{*}{2139.21}                      & \multirow{2}{*}{871.66}                         & \multirow{2}{*}{232.13}                        & \multirow{2}{*}{123.04}                         & \multirow{2}{*}{11.29}                                              & \multirow{2}{*}{2576.07}                               &  & \multirow{2}{*}{$\times 228.27$}                     \\
                      &                                                      &                                              &                                               &                                                 &                                                &                                                 &                                                                       &                                                      &  &                                              \\
                      & \multirow{2}{*}{100}                                 & \multirow{2}{*}{4184.08}                     & \multirow{2}{*}{4025.15}                      & \multirow{2}{*}{1884.74}                        & \multirow{2}{*}{400.89}                        & \multirow{2}{*}{256.71}                         & \multirow{2}{*}{18.00}                                              & \multirow{2}{*}{5311.97}                               &  & \multirow{2}{*}{$\times 295.04$}                     \\
                      &                                                      &                                              &                                               &                                                 &                                                &                                                 &                                                                       &                                                      &  &                                              \\
                      & \multirow{2}{*}{200}                                 & \multirow{2}{*}{8165.64}                     & \multirow{2}{*}{7971.17}                      & \multirow{2}{*}{4235.76}                        & \multirow{2}{*}{772.62}                        & \multirow{2}{*}{604.62}                         & \multirow{2}{*}{29.30}                                             & \multirow{2}{*}{16488.76}                               &  & \multirow{2}{*}{$\times 562.73$}                     \\
                      &                                                      &                                              &                                               &                                                 &                                                &                                                 &                                                                       &                                                      &  &                                              \\ \hline
\multirow{6}{*}{{SP$_{\tiny \text{safe}}$}} & \multirow{2}{*}{50}                                  & \multirow{2}{*}{334.07}                      & \multirow{2}{*}{334.70}                       & \multirow{2}{*}{350.69}                         & \multirow{2}{*}{11.85}                         & \multirow{2}{*}{12.36}                          & \multirow{2}{*}{9.30}                                               & \multirow{2}{*}{576.33}                                &  & \multirow{2}{*}{$\times 61.99$}                      \\
                      &                                                      &                                              &                                               &                                                 &                                                &                                                 &                                                                       &                                                      &  &                                              \\
                      & \multirow{2}{*}{100}                                 & \multirow{2}{*}{639.34}                      & \multirow{2}{*}{637.40}                       & \multirow{2}{*}{766.17}                         & \multirow{2}{*}{18.63}                         & \multirow{2}{*}{17.83}                          & \multirow{2}{*}{12.36}                                              & \multirow{2}{*}{2007.75}                               &  & \multirow{2}{*}{$\times 162.45$}                     \\
                      &                                                      &                                              &                                               &                                                 &                                                &                                                 &                                                                       &                                                      &  &                                              \\
                      & \multirow{2}{*}{200}                                 & \multirow{2}{*}{1268.76}                     & \multirow{2}{*}{1270.58}                      & \multirow{2}{*}{1879.29}                        & \multirow{2}{*}{28.23}                         & \multirow{2}{*}{29.63}                          & \multirow{2}{*}{19.02}                                              & \multirow{2}{*}{6155.00}                               &  & \multirow{2}{*}{$\times 323.65$}                     \\
                      &                                                      &                                              &                                               &                                                 &                                                &                                                 &                                                                       &                                                      &  &                                              \\ \hline
\end{tabular}
}
}
    \end{center}
    \caption{Computational Times for the $9$-bus Instance.}
\label{table:benchmark}
\end{table}

The ``Speedup'' column represents the speedup of \texttt{optCut}$^*_{\footnotesize ++}$ against \texttt{GUROBI}. For each scenario size, \texttt{GUROBI} is able to provide a feasible solution within the time limit; however, its computational time is even larger than \texttt{intLS}. For SP$_{\tiny \text{exact}}$, the computational times under \texttt{intLS} and \texttt{optCut} increase linearly with the size of scenarios whereas \texttt{optCut}$_{\footnotesize+}$ reduces these computational times almost by half. Surprisingly for SP$_{\tiny \text{safe}}$, \texttt{intLS} and \texttt{optCut} outperform \texttt{optCut}$_{\footnotesize+}$ under different sets of scenarios. The time-decomposability of scenario subproblems and status of system components provide the most computational gain in both SP$_{\tiny \text{exact}}$ and SP$_{\tiny \text{safe}}$ as these algorithmic enhancements prevent many unnecessary resolves of scenario subproblems within Algorithm \ref{alg:decomposition}. Under these enhancements, \texttt{optCut}$^*_{\footnotesize ++}$ outperforms  \texttt{intLS}$^*$ and \texttt{optCut}$^*$ for both exact and safe approaches. We observe that speedup gains for SP$_{\tiny \text{exact}}$ are more than for SP$_{\tiny \text{safe}}$ as the feasible region induced by the joint chance-constraint is smaller in the latter, which reduces the effects of the optimality cuts. Still, \texttt{optCut}$^*_{\footnotesize ++}$ provides a substantial speedup compared to \texttt{GUROBI} for both SP$_{\tiny \text{exact}}$ and SP$_{\tiny \text{safe}}$.

%Our computational study indicates that \texttt{optCut}$^*_{\footnotesize ++}$ is the most efficient compared to the remaining for $9$-bus instance under both SP$_{\tiny \text{exact}}$ and SP$_{\tiny \text{safe}}$.
Based on our preliminary computations of the SAA method, we observe that we can obtain maintenance and operational schedules within $2\%$ optimality under failure scenarios of size $50$ and $100$ (see Section \ref{ss:SAAResults}). We extend our computational study for all IEEE instances by setting the tolerance parameter $\epsilon$ to $10^{-4}$. In Table \ref{table:benchmark_}, we investigate the computational efficiency of \texttt{optCut}$^*_{\footnotesize ++}$  against \texttt{GUROBI} by reporting the following metrics:
\begin{itemize}
    \item \# Iter: The number of iterations within Algorithm \ref{alg:decomposition}.
    \item Time: The time for solving the joint chance-constrained stochastic program in seconds.
    \item Gap: The percentage relative optimality gap obtained within the $6$-hour time limit.
\end{itemize}
\begin{table}[H]
\begin{center}{\scalebox{1}{
\begin{tabular}{rcccccccccccccc}
\hline
                           &                                  & \multicolumn{6}{c}{\multirow{2}{*}{SP$_{\tiny \text{exact}}$}}                                                                                                               &  & \multicolumn{6}{c}{\multirow{2}{*}{SP$_{\tiny \text{safe}}$}}                                                                                                                                \\
                           &                                  & \multicolumn{6}{c}{}                                                                                                                                    &  & \multicolumn{6}{c}{}                                                                                                                                                     \\ \cline{3-8} \cline{10-15} 
                           &                                  & \multicolumn{3}{c}{\multirow{2}{*}{\texttt{optCut}$_{\footnotesize ++}^*$}}                                  &  & \multicolumn{2}{c}{\multirow{2}{*}{\texttt{GUROBI}}} &  & \multicolumn{3}{c}{\multirow{2}{*}{\texttt{optCut}$_{\footnotesize ++}^*$}}                                  & \multirow{2}{*}{} & \multicolumn{2}{c}{\multirow{2}{*}{\texttt{GUROBI}}} \\
                           &                                  & \multicolumn{3}{c}{}                                                         &  & \multicolumn{2}{c}{}                                                  &  & \multicolumn{3}{c}{}                                                         &                   & \multicolumn{2}{c}{}                                                  \\ \cline{3-5} \cline{7-8} \cline{10-12} \cline{14-15} 
                           & \multirow{2}{*}{$|\mathcal{K}|$} & \multirow{2}{*}{\# Iter} & \multirow{2}{*}{Time} & \multirow{2}{*}{Gap} &  & \multirow{2}{*}{Time}           & \multirow{2}{*}{Gap}           &  & \multirow{2}{*}{\# Iter} & \multirow{2}{*}{Time} & \multirow{2}{*}{Gap} &                   & \multirow{2}{*}{Time}           & \multirow{2}{*}{Gap}           \\
                           &                                  &                          &                       &                           &  &                                 &                                     &  &                          &                       &                           &                   &                                 &                                     \\ \hline
\multirow{2}{*}{$9$-bus}   & 50                               & 30                       & 11.29                 & 0.00                      &  & TL                               & 0.08                                  &  & 21                       & 10.02                 & 0.00                      &                   & 736.25                          & 0.00                                \\
                           & 100                              & 31                       & 18.00                 & 0.00                      &  & TL                               & 0.12                                   &  & 21                       & 13.30                 & 0.00                      &                   & 2931.43                         & 0.01                                \\ \hline
\multirow{2}{*}{$39$-bus}  & 50                               & 311                      & 3610.20               & 0.00                      &  & TL                               & 6.44                                &  & 75                       & 506.32                & 0.00                      &                   & TL                               & 0.72                                   \\
                           & 100                              & 348                      & 7334.40               & 0.00                      &  & TL                               & 30.63                               &  & 73                       & 598.75                & 0.00                      &                   & TL                               & 7.59                                   \\ \hline
\multirow{2}{*}{$57$-bus}  & 50                               & 329                      & 2509.55

             & 0.00                      &  & TL                               & 0.24                                &  & 393                      & 2110.02

            & 0.00                      &                   & TL                               & 0.07                                \\
                           & 100                              & 364                      &7708.44              & 0.01                      &  & TL                               & 0.29                                &  & 386                      & 3889.33              & 0.01                      &                   & TL                               & 0.16                                \\ \hline
\multirow{2}{*}{$118$-bus} & 50                               & 90                       & TL                    & 3.80                      &  & TL                               & 69.88                               &  & 699                      & TL                     & 0.79                      &                   & TL                               & 5.33                                   \\
                           & 100                              & 78                       & TL                    & 4.35                      &  & TL                               & NA                                  &  & 580                      & TL                     & 1.55                      &                   & TL                               & NA                                \\ \hline
\end{tabular}}}
    \end{center}
    \caption{Comparison of \texttt{optCut}$_{\footnotesize ++}^*$ with \texttt{GUROBI} for Different Instances.}
\label{table:benchmark_}
\end{table}
The ``TL'' (under column ``Time'') is used whenever the $6$-hour time limit is reached. The ``NA'' (under column ``Gap'') is used if no feasible solution is found within the time limit. According to Table \ref{table:benchmark_},  \texttt{optCut}$^*_{\footnotesize ++}$ and \texttt{GUROBI} produce an optimal solution within the time limit for the $9$-bus instance under SP$_{\tiny \text{safe}}$; however, \texttt{optCut}$^*_{\footnotesize ++}$ attains these solutions in less than $20$ seconds whereas the computational time of \texttt{GUROBI} rapidly increases when $100$ failure scenarios are used. For all instances, \texttt{optCut}$^*_{\footnotesize ++}$ outperforms \texttt{GUROBI} in terms of the percentage relative optimality gap. For the $118$-bus instance, \texttt{GUROBI} fails to produce a feasible solution within the time limit under scenario size of $100$ whereas \texttt{optCut}$^*_{\footnotesize ++}$ produces a feasible solution for both SP$_{\tiny \text{exact}}$ and SP$_{\tiny \text{safe}}$. Our computational study shows that \texttt{optCut}$^*_{\footnotesize ++}$ has significant computational gains compared to \texttt{GUROBI} under both SP$_{\tiny \text{exact}}$ and SP$_{\tiny \text{safe}}$, and can be used to produce high-quality feasible solutions for large-scale instances. 

\subsubsection{Parallel Computing}
A significant property of Algorithm \ref{alg:decomposition} is that the linear relaxations (Step \ref{alg:decompInitial}) and the second-stage problems (Step \ref{alg:decompStatusParallel}) can be solved in parallel. 
%In our computational experiments, we benefit from this property to achieve the most in terms of computational efficiency. 
In order to demonstrate the effect of parallelism within Algorithm \ref{alg:decomposition}, we solve the $9$-bus instance with a scenario size of $1000$ by using the exact reformulation of the joint chance-constraint. The results of our computational experiment are presented in Figure \ref{fig:parallelComputing} with respect to different number of threads.

\begin{figure}[H]
\centering
\begin{tikzpicture}[scale=1]
\draw[gray, dashdotted, xstep=1.555cm, ystep=1.29cm] (0,0) grid (6.9,5.7);
\begin{axis}[
tick align=outside,
tick pos=left,
xtick={0,1,2,3,4},
xticklabels={$2^0$,$2^1$,$2^2$,$2^3$,$2^4$},
ytick={0,1,2,3,4},
yticklabels={$2^0$,$2^1$,$2^2$,$2^3$,$2^4$},
%x grid style={white!69.0196078431373!black},
xlabel={Number of threads},
xtick style={color=black},
y grid style={white!69.0196078431373!black},
ymin ={0},
xmin={0},
ylabel={Speedup},
ytick style={color=black},
legend pos=north west,
legend style={nodes={scale=0.7, transform shape}}
]
\addplot[semithick, blue, mark=square]
table[row sep=crcr] {%
0 0\\
1 0.83\\
2 1.49\\
3 2.05\\
4 2.15\\
};
\addlegendentry{Observed speedup};
\addlegendentry{Ideal speedup};
\addplot[semithick, black, mark=square]
table[row sep=crcr] {%
0 0\\
1 1\\
2 2\\
3 3\\
4 4\\
};
\addlegendentry{Ideal speedup};
\end{axis}
\end{tikzpicture}
\caption{Speedup ratios with parallel computing.}
\label{fig:parallelComputing}
\end{figure}
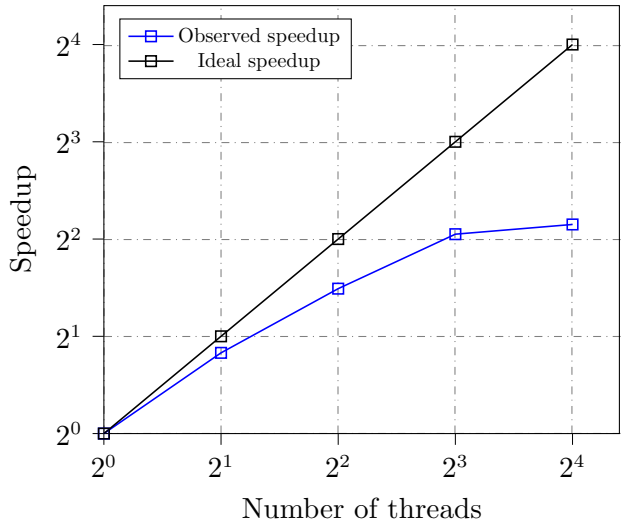
Our empirical study indicates a sublinear growth in the parallel speedup ratios. We note that the data size of $9$-bus instance is relatively small and the results are only representative, nevertheless, the utilization of the parallel computing becomes more apparent as the size of the problem increases.

\subsubsection{Flow Limit Analysis}
As a preprocessing step for Algorithm \ref{alg:decomposition}, we identify the redundancy in constraints \eqref{flowBounds} as explained in Section \ref{s:flowLimitAnalysis}. We summarize the computational results in Table \ref{table:RedundancyRatios} by reporting the following metrics:
    \begin{itemize}
        \item Time: The preprocessing time in seconds.
        \item UB: The redundancy ratio in the upper bound flow constraints.
        \item LB: The redundancy ratio in the  lower bound flow constraints.
    \end{itemize}
%The procedure for identifying flow limit redundancy differs in each $FlowModel$. For example, suppose that \textit{FlowModel-I} identifies redundancy only for transmission line $(i',j') \in \mathcal{L''}$ which implies that all of the corresponding flow upper (or similarly lower) bounds on transmission line $(i',j')$ within the planning horizon are redundant, therefore redundancy ratio given by \textit{FlowModel-I} is $1/|\mathcal{L''}|$. If \textit{FlowModel-I} identifies redundancy in upper (or lower) bounds for a different transmission line $(i'',j'') \in \mathcal{L''}$, this ratio is then given by $2/|\mathcal{L''}|$. 
We note that we do not report $57$-bus instance in Table \ref{table:RedundancyRatios} since all upper and lower flow limits are redundant. The redundancy ratios under UB and LB columns are given as follows:
\begin{itemize}
                \item The redundancy ratio over $|\mathcal{L''}|$ in \textit{FlowModel-I}.
                \item The redundancy ratio over $|\mathcal{L''}| \times |\mathcal{T}|$ in \textit{FlowModel-II}.
                \item The redundancy ratio over $|\mathcal{L''}| \times |\mathcal{T}| \times |\mathcal{S}|$ in \textit{FlowModel-III}.
    \end{itemize}
    
\begin{table}[H] 
        \begin{center}{\scalebox{1}{
\begin{tabular}{|lccccccccccc|}
\hline
& \multicolumn{3}{c}{\multirow{2}{*}{$9$-bus}}    &                      & \multicolumn{3}{c}{\multirow{2}{*}{$39$-bus}}                               &                      & \multicolumn{3}{c|}{\multirow{2}{*}{$118$-bus}}                             \\
& \multicolumn{3}{c}{}                                                        & \multicolumn{1}{l}{} & \multicolumn{3}{c}{}                                                        & \multicolumn{1}{l}{} & \multicolumn{3}{c|}{}                                                       \\ \cline{2-4} \cline{6-8} \cline{10-12} 
 & \multirow{2}{*}{UB} & \multirow{2}{*}{LB} & \multirow{2}{*}{Time} &                      & \multirow{2}{*}{UB} & \multirow{2}{*}{LB} & \multirow{2}{*}{Time} &                      & \multirow{2}{*}{UB} & \multirow{2}{*}{LB} & \multirow{2}{*}{Time} \\
&   &   &   & \multicolumn{1}{l}{} & &   &  & \multicolumn{1}{l}{} &    &   &  \\ \hline
\textit{FlowModel-I}   & 0.500   & 0.333  & 0.011  &   & 0.476  & 0.548  & 0.114 & & 0.819 & 0.819 & 3.541 \\
\textit{FlowModel-II}  & 0.500   & 0.476  & 0.035  &   & 0.514  & 0.548  & 0.740 & & 0.819 & 0.822 & 24.432\\
\textit{FlowModel-III} & 0.602   & 0.640  & 0.593  &   & 0.560  & 0.548  & 17.650& & 0.831 & 0.832 & 574.311\\ \hline
\end{tabular}}}
    \end{center}
    \caption{Flow Limit Analysis.}
    \label{table:RedundancyRatios}
    \end{table}
Each {\it{FlowModel}} identifies redundacy less than a second for the $9$-bus instance whereas the differences between the preprocessing times become more evident as the instance size increases. For $39$-bus and $118$-bus instances, the differences between the redundancy ratios given by {\it FlowModel-I} and {\it FlowModel-II} remain almost identical. Thus, one can potentially consider the trade-off between the computational effort and redundancy in the choice of \textit{FlowModel}. As instance size increases, the difference between the redundancy ratios tends to decrease. Still, our computational results show that \textit{FlowModel-III} provides the best ratio within a reasonable time limit for all instances. Therefore, we use  \textit{FlowModel-III} to identify such redundant flow limits in the remainder of our computational experiments.

\subsection{Sample Average Approximation Results} \label{ss:SAAResults}
In this section, we present our computational results by solving the SAA problems of the joint chance-constrained stochastic program by evaluating the obtained solutions through Algorithm \ref{alg:SAA}. For that purpose, we let $M = 5$, $N' = 1000$, $N = 50$ and $N =100$ with significance level of $0.05$ of the SAA algorithm. We generate i.i.d. samples for each replicate and solve them with Algorithm \ref{alg:decomposition} for various IEEE test instances. We remind the readers that the training scenarios are generated over the set of components $\mathcal{H'}$ whereas solutions are evaluated over the failure possibilities of all components $\mathcal{H}$. The resulting $95\%$ CIs for the lower and upper bound estimates (in $100.000 \$$) are presented in Table \ref{table:SAAResults}. We also report the estimated optimality gaps between the optimal value associated with the candidate optimal solutions produced by the SAA method and the true optimal value in Table \ref{table:SAAResults}.

\begin{table}[H]
\begin{center}{\scalebox{1}{

\begin{tabular}{rcccccccc}
\hline
\multicolumn{1}{l}{}     & \multicolumn{1}{l}{}             & \multicolumn{3}{c}{\multirow{2}{*}{{SP$_{\tiny \text{exact}}$}}}                          & \multicolumn{1}{l}{} & \multicolumn{3}{c}{\multirow{2}{*}{{SP$_{\tiny \text{safe}}$}}}                           \\
                         & \multicolumn{1}{l}{}             & \multicolumn{3}{c}{}                                                              & \multicolumn{1}{l}{} & \multicolumn{3}{c}{}                                                              \\ \cline{3-5} \cline{7-9} 
\multicolumn{1}{c}{}    & \multirow{2}{*}{$|\mathcal{K}|$} & \multirow{2}{*}{CI of LB} & \multirow{2}{*}{CI of UB} & \multirow{2}{*}{Gap (\%)} &                      & \multirow{2}{*}{CI of LB} & \multirow{2}{*}{CI of UB} & \multirow{2}{*}{Gap (\%)} \\
                         &                                  &                           &                           &                           &                      &                           &                           &                           \\ \hline
\multirow{2}{*}{9-bus}   & 50                               & (1.61, 1.64)              & (1.63, 1.64)              & 2.30                      &                      & (1.64, 1.67)              & (1.66, 1.67)              & 2.03                      \\
                         & 100                              & (1.63, 1.64)              & (1.63, 1.64)              & 1.17                      &                      & (1.64, 1.66)             & (1.66, 1.67)              & 1.65                      \\ \hline
\multirow{2}{*}{39-bus}  & 50                               & (31.50, 31.80)            & (31.69, 31.88)            & 1.21                      &                      & (36.29, 36.41)            & (36.39, 36.47)            & 0.48                      \\
                         & 100                              & (31.65, 31.88)            & (31.69, 31.89)            & 0.72                      &                      & (36.32, 36.38)            & (36.39, 36.47)            & 0.41                      \\ \hline
\multirow{2}{*}{57-bus}  & 50                               & (36.63, 36.79)            & (36.91, 37.02)            & 1.07                      &                      & (36.72, 36.83)            & (36.97, 37.08)            & 0.98                      \\
                         & 100                              & (36.67, 36.78)            & (36.90, 37.02)            & 0.95                      &                      & (36.74, 36.85)            & (36.97, 37.08)            & 0.93                      \\ \hline
\multirow{2}{*}{118-bus} & 50                               & (5.48, 5.52)              & (5.53, 5.55)               & 1.22      
            &                      & (5.49, 5.52)    & (5.53, 5.55)     & 1.12      \\
                         & 100                              & (5.50, 5.52)     & (5.53, 5.55)      & 0.86 
            &                      & (5.49, 5.51)      & (5.53, 5.55)      & 1.10      \\ \hline
\end{tabular}

}

}
\end{center}
\caption{SAA Results.}
\label{table:SAAResults}\end{table}

According to Table \ref{table:SAAResults}, the estimated gap decreases as the size of scenarios increases for all instances, as expected. For the $9$-bus instance, the scenario size of $100$ reduces the estimated gap significantly compared to the scenario size of $50$ whereas this reduction is less significant in other instances. %Still, this difference between the estimated gaps under different scenario sizes gets smaller for larger instances. 
Additionally, the estimated confidence intervals are almost identical for the $118$-bus instance for both SP$_{\tiny \text{exact}}$ and SP$_{\tiny \text{safe}}$. The results of our computational study indicate that these sample sizes along with the choice of the system components considered for maintenance are indeed large enough to obtain the corresponding tight bounds on the true optimal value of our optimization model. 
    
\subsection{Model Comparison} \label{ss:modelComparison}
In this section, we evaluate the quality of the maintenance schedules obtained from the proposed stochastic models, SP$_{\tiny \text{exact}}$ and SP$_{\tiny \text{safe}}$, in terms of the average failures of system components, maintenance and operational costs under $50$ failure scenarios. In order to quantify the effects of these schedules when the unexpected failures are not considered, we compare the maintenance schedules of SP$_{\tiny \text{exact}}$ and SP$_{\tiny \text{safe}}$ with those of a deterministic model (DM), which assumes that  none of the system components will fail within the planning horizon. We evaluate each maintenance schedule over failure scenarios of size $1000$ and report the average failures in Table \ref{table:avgFailures}. 
\begin{table}[H]
\scriptsize
\centering
\begin{tabular}{rclccclccclccclccc}
\hline
&&& \multicolumn{3}{c}{\multirow{2}{*}{$\mathcal{G'}$}}  & & \multicolumn{3}{c}{\multirow{2}{*}{$\mathcal{L'}$}}
 && \multicolumn{3}{c}{\multirow{2}{*}{$\mathcal{G''} \cup \mathcal{L}''$}}  & & \multicolumn{3}{c}{\multirow{2}{*}{JCC-Violation}}\\
        &                                            &  & \multicolumn{3}{c}{}                                                     &  & \multicolumn{3}{c}{}                                                     &  & \multicolumn{3}{c}{}                                                     &  & \multicolumn{2}{c}{}                                      \\ \cline{4-6} \cline{8-10} \cline{12-14} \cline{16-18} 
        & \multirow{2}{*}{$(|\mathcal{G'}|, |\mathcal{L'}|)$} &  & \multirow{2}{*}{SP$_{\tiny \text{exact}}$} & \multirow{2}{*}{SP$_{\tiny \text{safe}}$} & \multirow{2}{*}{DM} &  & \multirow{2}{*}{SP$_{\tiny \text{exact}}$} & \multirow{2}{*}{SP$_{\tiny \text{safe}}$} & \multirow{2}{*}{DM} &  & \multirow{2}{*}{SP$_{\tiny \text{exact}}$} & \multirow{2}{*}{SP$_{\tiny \text{safe}}$} & \multirow{2}{*}{DM} &  & \multirow{2}{*}{SP$_{\tiny \text{exact}}$} & \multirow{2}{*}{SP$_{\tiny \text{safe}}$} & \multirow{2}{*}{DM}   \\
        &   &  &   &                          &                     &  &                         &                          &                     &  &                         &                          &                     &  &                             &                            \\ \hline
9-bus   & $(1,3)$                                             &  & 0.014                   & 0.014                    & 1.000               &  & 0.254                   & 0.000                    & 1.390               &  & \multicolumn{3}{c}{0.043}                                                &  & 0.000                           & 0.000                        &0.417   \\ \hline
39-bus  & $(4, 4)$                                            &  & 0.559                   & 0.022                    & 3.075               &  & 0.335                   & 0.000                    & 3.950               &  & \multicolumn{3}{c}{0.095}                                                &  & 0.097                           & 0.003                        & 1.000  \\ \hline
57-bus  & $(2, 7)$                                            &  & 0.010                   & 0.010                    & 1.894               &  & 0.190                   & 0.000                    & 6.067               &  & \multicolumn{3}{c}{0.259}                                                &  & 0.000                           & 0.000                      & 0.999     \\ \hline
118-bus & $(4, 9)$                                            &  & 0.064                   & 0.062                    & 3.798               &  & 0.340                   & 0.091                    & 6.776               &  & \multicolumn{3}{c}{0.124}                                                &  & 0.006                           & 0.006      & 1.000                     \\ \hline
\end{tabular}

\caption{Average Failures under Stochastic and Deterministic Models.}
\label{table:avgFailures}\end{table}
The ``JCC-Violation'' column represents the total number of joint chance-constraint violations under different maintenance plans by evaluating the number of components entering corrective maintenance under each scenario against the desired thresholds. For all instances, these violations are less than the probability level of the joint chance-constraint for both SP$_{\tiny \text{exact}}$ and SP$_{\tiny \text{safe}}$; however, these are adversely higher under DM as it does not consider the risks associated with the unexpected failures. Furthermore, SP$_{\tiny \text{safe}}$ provides a more conservative approach with less number of failures and lower violation of the joint-chance constraint, compared to the SP$_{\tiny \text{exact}}$ approach. In Table \ref{table:costComparison}, we also report the maintenance and operational costs incurred under these different maintenance schedules. 
\begin{table}[H]
\scriptsize
\centering
\begin{tabular}{rccccccccccccccc}
\hline
        &                                                 & \multicolumn{3}{c}{\multirow{2}{*}{GM}}                              &  & \multicolumn{3}{c}{\multirow{2}{*}{TLM}}                             &  & \multicolumn{3}{c}{\multirow{2}{*}{Operations}}                      &  & \multicolumn{2}{c}{\multirow{2}{*}{Cost Improv. (\%)}} \\
        &                                                 & \multicolumn{3}{c}{}                                                 &  & \multicolumn{3}{c}{}                                                 &  & \multicolumn{3}{c}{}                                                 &  & \multicolumn{2}{c}{}                                     \\ \cline{3-5} \cline{7-9} \cline{11-13} \cline{15-16} 
        & \multirow{2}{*}{$(|\mathcal{G'}|, |\mathcal{L}'|)$} & \multirow{2}{*}{SP$_{\tiny \text{exact}}$} & \multirow{2}{*}{SP$_{\tiny \text{safe}}$} & \multirow{2}{*}{DM} &  & \multirow{2}{*}{SP$_{\tiny \text{exact}}$} & \multirow{2}{*}{SP$_{\tiny \text{safe}}$} & \multirow{2}{*}{DM} &  & \multirow{2}{*}{SP$_{\tiny \text{exact}}$} & \multirow{2}{*}{SP$_{\tiny \text{safe}}$} & \multirow{2}{*}{DM} &  & \multirow{2}{*}{SP$_{\tiny \text{exact}}$}      & \multirow{2}{*}{SP$_{\tiny \text{safe}}$}      \\
        &                                                 &                       &                        &                     &  &                       &                        &                     &  &                       &                        &                     &  &                            &                             \\ \hline
9-bus   & (1,3)                                           & 0.32                  & 0.32                   & 0.91                &  & 0.04                  & 0.05                   & 0.06                &  & 1.28                  & 1.30                   & 1.39                &  & 30.67                      & 29.52                       \\ \hline
39-bus  & (4,4)                                           & 2.73                  & 2.42                   & 5.88                &  & 0.26                  & 0.22                   & 0.64                &  & 28.80                 & 33.79                  & 35.91               &  & 25.08                      & 14.14                       \\ \hline
57-bus  & (2,7)                                           & 3.78                  & 3.78                   & 10.09               &  & 1.13                  & 1.20                   & 2.95                &  & 32.05                 & 32.05                  & 31.98               &  & 17.89                      & 17.74                       \\ \hline
118-bus & (4,9)                                           & 1.13                  & 1.12                   & 3.07                &  & 0.22                  & 0.22                   & 0.49                &  & 4.20                  & 4.19                   & 4.26                &  & 29.07                      & 29.05                       \\ \hline
\end{tabular}

\caption{Cost Comparison of Stochastic and Deterministic Models.}
\label{table:costComparison}\end{table}
The ``GM'' and ``TLM'' columns provide the generator and transmission line maintenance costs, respectively. The ``Operations'' column gives the operational costs. All costs are reported in $100.000 \$$. The ``Cost Improv. (\%)'' represents the total cost improvements in percentages achieved by stochastic models compared to the deterministic model.

Table \ref{table:avgFailures} shows that the average failures for $\mathcal{G'}$ and $\mathcal{L'}$ significantly decrease under both SP$_{\tiny \text{exact}}$ and SP$_{\tiny \text{safe}}$ as DM ignores the power system capabilities. Accordingly, generator and transmission line maintenance costs obtained under these stochastic methods are less than under those of DM for all instances. We also observe a slight increase in the operational costs in the DM approach, except the $57$-bus instance; however, DM still incurs a higher total cost than SP$_{\tiny \text{exact}}$ and SP$_{\tiny \text{safe}}$. This is due to the fact that the effects of the unexpected failures of system components on power system operations are ignored in DM. Our computational study shows that $14-31 \%$ cost savings can be obtained under stochastic models in comparison with DM. As a result, the coordination between maintenance and operational schedules when considering the unexpected failures of system components yields  significant cost savings as well as less interruptions due to these failures.

\subsection{Sensitivity Analysis} \label{ss:sensitivityAnalysis}
In this section, we examine the effects of different choices of sets $\mathcal{G'}$ and $\mathcal{L'}$ on average failures, maintenance and operational costs under $50$ failure scenarios. For that purpose, we first select $(p_{fail}^{\mathcal{G}}, p_{fail}^{\mathcal{L}}) = (0.2, 0.4)$ which decreases the cardinality of these sets compared to the baseline setting; however, this selection of subsets results in infeasibilities. This is because of the fact that components in $\mathcal{H''}$ are not scheduled for maintenance within the planning horizon, which causes the  violation of the joint chance-constraint. Then, we analyze the effects of the size of sets $\mathcal{G'}$ and $\mathcal{L'}$ when $(p_{fail}^{\mathcal{G}}, p_{fail}^{\mathcal{L}}) = (0.01, 0.02)$ that considers more components for maintenance. We evaluate the maintenance schedules obtained by stochastic and deterministic models over $1000$ failure scenarios, which are the same in Section \ref{ss:modelComparison}. We report the average failures and joint chance-constraint violations in Table \ref{table:avgFailuresBig}. 
\begin{table}[H]
\scriptsize
\centering
\begin{tabular}{rclccclccclccclccc}
\hline
        &   &  & \multicolumn{3}{c}{\multirow{2}{*}{$\mathcal{G'}$}}    &  & \multicolumn{3}{c}{\multirow{2}{*}{$\mathcal{L'}$}}                      &  & \multicolumn{3}{c}{\multirow{2}{*}{$\mathcal{G''} \cup \mathcal{L}''$}}  &  & \multicolumn{3}{c}{\multirow{2}{*}{JCC-Violation}} \\
        &                                                     &  & \multicolumn{3}{c}{}                                                     &  & \multicolumn{3}{c}{}                                                     &  & \multicolumn{3}{c}{}                                                     &  & \multicolumn{2}{c}{}                                      \\ \cline{4-6} \cline{8-10} \cline{12-14} \cline{16-18} 
        & \multirow{2}{*}{$(|\mathcal{G'}|, |\mathcal{L'}|)$} &  & \multirow{2}{*}{SP$_{\tiny \text{exact}}$} & \multirow{2}{*}{SP$_{\tiny \text{safe}}$} & \multirow{2}{*}{DM} &  & \multirow{2}{*}{SP$_{\tiny \text{exact}}$} & \multirow{2}{*}{SP$_{\tiny \text{safe}}$} & \multirow{2}{*}{DM} &  & \multirow{2}{*}{SP$_{\tiny \text{exact}}$} & \multirow{2}{*}{SP$_{\tiny \text{safe}}$} & \multirow{2}{*}{DM} &  & \multirow{2}{*}{SP$_{\tiny \text{exact}}$}     & \multirow{2}{*}{SP$_{\tiny \text{safe}}$} & \multirow{2}{*}{DM}      \\
        &                                                     &  &                         &                          &                     &  &                         &                          &                     &  &                         &                          &                     &  &                             &                             \\ \hline
9-bus   & $(2,3)$                                             &  & 0.057                   & 0.065                    & 1.043               &  & 0.254                   & 0.000                    & 1.390               &  & \multicolumn{3}{c}{0.000}                                                &  & 0.000                           & 0.000                        & 0.417   \\ \hline
39-bus  & $(7, 5)$                                            &  & 0.452                   & 0.065                    & 3.118               &  & 0.046                   & 0.046                    & 3.996              &  & \multicolumn{3}{c}{0.006}                                                &  & 0.062                           & 0.003                        & 1.000   \\ \hline
57-bus  & $(3, 11)$                                            &  & 0.041                   & 0.041                    & 1.925               &  & 0.381                   & 0.192                    & 6.259               &  & \multicolumn{3}{c}{0.036}                                                &  & 0.000                           & 0.000                     &0.999     \\ \hline
118-bus & $(5, 11)$                                            &  & 0.108                   & 0.089                    & 3.825               &  & 0.407                   & 0.144                    & 6.829               &  & \multicolumn{3}{c}{0.044}                                                &  & 0.008                           & 0.006                        &1.000   \\ \hline
\end{tabular}
\caption{Average Failures under Stochastic and Deterministic Models with Larger $\mathcal{H'}$.}
\label{table:avgFailuresBig} \end{table}

We observe that the average failures of set $\mathcal{G}'$ and $\mathcal{L'}$ increase; however, this is an expected result since more system components are under study for maintenance. Table \ref{table:costComparisonBig} demonstrates the maintenance and operational costs incurred when the failure probability thresholds are decreased.

\begin{table}[H]
\scriptsize
\centering
\begin{tabular}{rccccccccccccccc}
\hline
        &                                                 & \multicolumn{3}{c}{\multirow{2}{*}{GM}}                              &  & \multicolumn{3}{c}{\multirow{2}{*}{TLM}}                             &  & \multicolumn{3}{c}{\multirow{2}{*}{Operations}}                      &  & \multicolumn{2}{c}{\multirow{2}{*}{Cost Improv. (\%)}} \\
        &                                                 & \multicolumn{3}{c}{}                                                 &  & \multicolumn{3}{c}{}                                                 &  & \multicolumn{3}{c}{}                                                 &  & \multicolumn{2}{c}{}                                     \\ \cline{3-5} \cline{7-9} \cline{11-13} \cline{15-16} 
        & \multirow{2}{*}{$(|\mathcal{G'}|, |\mathcal{L}'|)$} & \multirow{2}{*}{SP$_{\tiny \text{exact}}$} & \multirow{2}{*}{SP$_{\tiny \text{safe}}$} & \multirow{2}{*}{DM} &  & \multirow{2}{*}{SP$_{\tiny \text{exact}}$} & \multirow{2}{*}{SP$_{\tiny \text{safe}}$} & \multirow{2}{*}{DM} &  & \multirow{2}{*}{SP$_{\tiny \text{exact}}$} & \multirow{2}{*}{SP$_{\tiny \text{safe}}$} & \multirow{2}{*}{DM} &  & \multirow{2}{*}{SP$_{\tiny \text{exact}}$}      & \multirow{2}{*}{SP$_{\tiny \text{safe}}$}      \\
        &                                                 &                       &                        &                     &  &                       &                        &                     &  &                       &                        &                     &  &                            &                             \\ \hline
9-bus   & (2,3)                                           & 0.32                  & 0.32                   & 0.91                &  & 0.04                  & 0.05                   & 0.06                &  & 1.28                  & 1.30                   & 1.39                &  & 30.67                      & 29.52                       \\ \hline
39-bus  & (7,5)                                           & 2.52                  & 2.42                   & 5.88                &  & 0.22                  & 0.22                   & 0.64                &  & 29.73                 & 33.79                  & 35.91               &  & 23.47                      & 14.14                       \\ \hline
57-bus  & (3,11)                                           & 3.78                  & 3.78                   & 10.09               &  & 1.13                  & 1.20                   & 2.95                &  & 32.05                 & 32.05                  & 31.98               &  & 17.89                      & 17.74                       \\ \hline
118-bus & (5,11)                                           & 1.13                  & 1.12                   & 3.07                &  & 0.22                  & 0.22                   & 0.49                &  & 4.20                  & 4.19                   & 4.26                &  & 28.98                      & 29.05                       \\ \hline
\end{tabular}

\caption{Cost Comparison of Stochastic and Deterministic Models with Larger $\mathcal{H'}$.}
\label{table:costComparisonBig}\end{table}
For $9$-bus and $57$-bus instances, increasing the sizes of sets $\mathcal{G'}$ and $\mathcal{L'}$ does not affect the quality of the maintenance schedules for both SP$_{\tiny \text{exact}}$ and SP$_{\tiny \text{safe}}$ as compared to the results in Section \ref{ss:modelComparison}. For the $39$-bus instance under SP$_{\tiny \text{exact}}$, we observe a slight decrease in both generator and transmission line maintenance costs whereas operational cost increases. On the other hand, there is a relatively small increase in maintenance and operational costs for the $118$-bus instance under SP$_{\tiny \text{exact}}$. This is because of the fact that large-scale instances cannot be solved to optimality within tolerance as increasing the size of $\mathcal{H'}$ increases the computational time required for convergence of the solution algorithm as well. Nevertheless in all cases, there are still significant cost savings compared to DM. We observe that although we take less failure risks by decreasing probability thresholds, we might be overly cautious which can result in higher operational costs. %Our computational analysis shows that considering components which are unlikely to fail within the planning horizon can affect the cost improvements achieved by stochastic models as solving the resulting problem becomes computationally more demanding when these sets are enlarged.

\section{Conclusions} \label{s:conclusions}
In this paper, we study a short-term condition-based integrated maintenance planning problem in coordination with the power system operations by considering the unexpected failures of generators as well as transmission lines. We formulate this problem as a two-stage joint chance-constrained stochastic program. Under a Bayesian setting, we obtain the RLDs of generators and transmission lines by using their degradation-based sensor information. We consider a specific subset of these components which are more prone to failure for scheduling maintenance and take the effects of their unexpected failures into account  based on their estimated RLDs. We introduce a joint chance-constraint to mitigate the failure risk in the power network by restricting the number of system components under corrective maintenance. We develop a decomposition algorithm by improving the integer L-shaped method with various algorithmic enhancements including derivation of stronger optimality cuts by exploiting the underlying problem structure. This algorithm also includes a separation subroutine to provide an exact representation of the joint chance-constraint by leveraging the Poisson Binomial random variables in this constraint. As an alternative approach, we also provide a SOCP-based safe approximation to represent the joint chance-constraint which provides computational advantages for larger scale instances, despite of its conservatism. Our computational experiments demonstrate the efficiency of the proposed decomposition algorithm along with the improved cut generation procedures and preprocessing steps which consistently outperforms the state-of-the-art solver for all test instances. Finally, we highlight that our proposed stochastic  models can obtain $14-31 \%$ cost savings against a deterministic model since maintenance and operational schedules are coordinated in these models while explicitly considering the effects of failure uncertainty on power system operations.

\renewcommand{\theHchapter}{A\arabic{chapter}}

\begin{APPENDICES} 

\section{The Monotonicity of Poisson Binomial Distribution.} \label{app:monotonicity}
We state Lemma \ref{lemma:monotonicity} which is used in the proof of Proposition \ref{prop:monotonicity}.

\begin{lemma} \label{lemma:monotonicity}
The cumulative distribution function of Poisson Binomial distribution is non-increasing with respect to success probability $p_i$ for all $i =1, \dots, n$. 
\end{lemma}
\begin{proof}[Proof.]
Let $Y$ be a Poisson Binomial random variable with success probabilities $p_1, \dots, p_n$. It suffices to show that the partial derivative of the cumulative distribution function of Poisson Binomial distribution with respect to $p_i$ is nonpositive for all $i = 1, \dots, n$. Without loss of generality, we concentrate on the $n$th Bernoulli random variable. 
The cumulative distribution function of Poisson Binomial distribution is given by:
\begin{align*} \label{eq:cdfPoissonBinomial}
    F(y, p_1, \dots, p_n) =\mathbb{P}(Y \le y) = \sum_{l = 0}^y f(l, p_1, \dots, p_n) = \sum_{l = 0}^y \sum_{A \in \mathcal{B}_l(1, \dots, n)} \prod_{i \in A} p_i \prod_{j \in A^c} (1-p_j),
\end{align*}
where $f(l, p_1, \dots, p_n)$ denotes its probability mass function, i.e., the probability of $l$ successes in $n$ Bernoulli trials, and $\mathcal{B}_l(1, \dots, n)$ denotes the set of all subsets of size $l$ from $\{1, \dots, n\}$. We can rewrite the probability mass function of Poisson Binomial distribution as follows: 
\begin{align*}
     f(y, p_1, \dots, p_n) &= \sum_{A \in \mathcal{B}_y(1, \dots, n): n \in A} \prod_{i \in A} p_i \prod_{j \in A^c} (1-p_j) + \sum_{A \in \mathcal{B}_y(1, \dots, n): n \notin A} \prod_{i \in A} p_i \prod_{j \in A^c}  (1-p_j)\\
   &=  p_n  \sum_{A \in \mathcal{B}_y(1, \dots, n): n \in A} \prod_{i \in A: i \neq n} p_i  \prod_{j \in A^c} (1-p_j) + (1-p_n) \sum_{A \in \mathcal{B}_y(1, \dots, n): n \notin A} \prod_{i \in A} p_i \prod_{j \in A^c: j \neq n}  (1-p_j) . 
\end{align*}
Let us now obtain the partial derivative of $f(y,p_1, \dots, p_n)$ with respect to $p_n$. In fact, we have:
\begin{align*}
   \frac{\partial f(y,p_1, \dots, p_n)}{\partial p_n} &=\sum_{A \in \mathcal{B}_y(1, \dots, n): n \in A} \prod_{i \in A: i \neq n} p_i  \prod_{j \in A^c} (1-p_j) - \sum_{A \in \mathcal{B}_y(1, \dots, n): n \notin A} \prod_{i \in A} p_i \prod_{j \in A^c: j \neq n}  (1-p_j)\\
   &=\sum_{A \in \mathcal{B}_{y-1}(1, \dots, n-1)} \prod_{i \in A} p_i  \prod_{j \in A^c} (1-p_j) - \sum_{A \in \mathcal{B}_y(1, \dots, n-1)} \prod_{i \in A} p_i \prod_{j \in A^c}  (1-p_j).
\end{align*}
Consider the quantity $ H^{y-1} := \sum_{A \in \mathcal{B}_{y-1}(1, \dots, n-1)} \prod_{i \in A} p_i  \prod_{j \in A^c} (1-p_j)$. The first term in the last equality follows from the fact that the index $n$ indeed belongs to set $\mathcal{B}_y(1, \dots, n)$, but is not used in any of the multiplication operations. This is equivalent to the selection of $y-1$ many elements from $\{1, \dots, n-1\}$. Similarly, consider the quantity $ H^{y} := \sum_{A \in \mathcal{B}_y(1, \dots, n-1)} \prod_{i \in A} p_i \prod_{j \in A^c}  (1-p_j)$. The second term in the last equality is due to the fact that the index $n$ does not belong to set $\mathcal{B}_y(1, \dots, n)$, and is not used in any of the multiplication operations. This is equivalent to the selection of $y$ many elements from $\{1, \dots, n-1\}$. Thus, the partial derivative of $f(y,p_1, \dots, p_n)$ with respect to $p_n$ is given by:
\[ \frac{\partial f(y,p_1, \dots, p_n)}{\partial p_n}  =\begin{cases}
- H^0 & \text{if } y = 0,\\
H^{y-1} - H^y &\text{if } 1 \le y \le n-1,\\
H^{n-1}  &\text{if } y = n.
\end{cases}
\]
Then, it is easy to obtain the partial derivative of $F(y,p_1, \dots, p_n)$ with respect to $p_n$ as follows:
\[ \frac{\partial F(y,p_1, \dots, p_n)}{\partial p_n}  =\begin{cases}
- H^0 & \text{if } y = 0,\\
 - H^y &\text{if } 1 \le y \le n-1,\\
0  &\text{if } y = n.
\end{cases}
\]
Since $p_n \in [0,1]$, we clearly have $ \frac{\partial F(y,p_1, \dots, p_n)}{\partial p_n}  \le 0$ for $y=0, \dots, n$. This proves the property of monotonicity of Poisson Binomial distribution with respect to $p_n$.
\Halmos
\end{proof}

\begin{proof}[Proof of Proposition \ref{prop:monotonicity}.]
Consider any pair of maintenance decisions $v' = (w', z'), v'' = (w'', z'')$ with the following property:  
\[(h, t') \le (h, t'') \text{ for } (h,t') \in \mathcal{I}(v'), \ (h,t'') \in \mathcal{I}(v'') \text{ and } h \in \mathcal{H'}.\]
Let us first consider the set of generators prone to failure. As before, we let $\hat \zeta_{\mathcal{G}}(w')$ and $\hat \zeta_{\mathcal{G}}(w'')$ be the Poisson Binomial random variables with success probabilities $\{p_i' = \mathbb{P}(\xi_i \le m_i(w')); \ i \in \mathcal{G}\}$ and $\{p_i'' = \mathbb{P}(\xi_i \le m_i(w'')); \ i \in \mathcal{G}\}$, respectively. Clearly, the maintenance schedule under decision $w''$ is in a later period than the maintenance schedule under decision $w'$, which implies that $m_i (w') \le m_i (w'')$ for $i \in \mathcal{G'}$. Then, we have $p_i' \le p_i''$. Secondly, we consider the set of transmission lines prone to failure. We let $\hat \zeta_{\mathcal{L}}(z')$ and $\hat \zeta_{\mathcal{L}}(z'')$ be the Poisson Binomial random variables with success probabilities $\{p_{ij}' = \mathbb{P}(\xi_{ij} \le m_{ij}(z')); \ (i,j) \in \mathcal{L}\}$ and $\{p_{ij}'' = \mathbb{P}(\xi_i \le m_i(z'')); \ i \in \mathcal{L}\}$, respectively. Similarly, we have $p_{ij}' \le p_{ij}''$ for $(i,j) \in \mathcal{L'}$.

By Lemma \ref{lemma:monotonicity}, we have $\mathbb{P}(\hat \zeta_{\mathcal{G}}(w') \le \rho_{\mathcal{G}}) \ge \mathbb{P}(\hat \zeta_{\mathcal{G}}(w'') \le \rho_{\mathcal{G}})$ and  $\mathbb{P}(\hat \zeta_{\mathcal{L}}(z') \le \rho_{\mathcal{L}}) \ge \mathbb{P}( \hat \zeta_{\mathcal{L}}(z'') \le \rho_{\mathcal{L}})$. By using the independence of these random variables, we immediately have that $ \mathcal{P}(v') \ge \mathcal{P}(v'')$.
\Halmos
\end{proof}

\section{SAA Algorithm.} \label{app:SAA}
\begin{algorithm}[H]
\caption{SAA}
\label{alg:SAA}
\begin{algorithmic}[1]
\STATE Generate an i.i.d. failure scenario sample of size $N'$ considering all system components $\mathcal{H}$.
\FORALL{$i = 1,\dots, M$}
\STATE Generate an i.i.d. failure scenario sample of size $N$ considering all system components $\mathcal{H'}$.
\STATE Solve $\hat z^i_N = \min \Big\{ \frac{1}{N}\sum_{k \in \mathcal{K}} \pi^k \Big(c_k^{\top} v + \sum_{t \in \mathcal{T}}  \mathcal{Q}_t(v, \xi_k) \Big):  v \in \mathcal{\hat V} \Big\}$ using Algorithm \ref{alg:decomposition} and obtain $\epsilon$-optimal solution $\hat v ^i_N.$
\STATE Evaluate $\hat v ^i_N$ over $N'$ scenarios: $\hat z^i_{N'}(\hat v ^i_N) = \frac{1}{N'}\sum_{k =1}^{N'} \pi^k(c_k^{\top} \hat v ^i_N + \sum_{t \in \mathcal{T}} \mathcal{Q}_t(\hat v ^i_N, \xi_k)).$
\ENDFOR
\vspace{1mm}
\STATE Select the best candidate solution $\hat v^* \in \argmin \{ \hat z_{N'}^1(\hat v ^1_N), \dots, \hat z_{N'}^M(\hat v ^M_N) \}$ and the best upper bound estimate $\hat \mu_{U} = \hat z_{N'}(\hat v^*)$.
\STATE Calculate the variance estimate of the true upper bound estimate:\\
\[\hat \sigma^2_{U} = \frac{1}{N'(N'-1)} \sum_{k = 1}^{N'} \bigg( \big(c_k^{\top} \hat v^*  + \sum_{t \in \mathcal{T}} \mathcal{Q}_t(\hat v^*, \xi_k)\big) - \hat \mu_U \bigg)^2.\]
\STATE Construct the approximate $(1-\alpha)$ level CI for the upper bound estimate as $ \hat \mu_U \pm z_{\alpha/2} \hat \sigma_U$. \label{alg:SAA_CIofUB}
\STATE Calculate the mean and variance estimates of the true lower bound estimate as $\hat \mu_L$ and $\hat \sigma_L^2$ as:
\[\hat \mu_L = \frac{1}{M} \sum_{i=1}^{M} \hat z^i_N \quad \text{ and } \quad \hat \sigma^2_{L} = \frac{1}{M(M-1)} \sum_{i = 1}^{M} \big( \hat z^i_N - \hat \mu_L\big)^2.\]
\STATE Construct the approximate $(1-\alpha)$ level CI for the lower bound estimate as $ \hat \mu_L \pm t_{\alpha/2, M-1} \hat \sigma_L$. \label{alg:SAA_CIofLB}
\STATE Construct the approximate $(1-\alpha)$ level CI for the true objective value as $(\hat \mu_L - t_{\alpha/2, M-1} \hat \sigma_L, \ \hat \mu_U + z_{\alpha/2} \hat \sigma_U)$.
\end{algorithmic}
\end{algorithm}

\end{APPENDICES}

\bibliographystyle{informs2014} 
\bibliography{references}

\end{document}